\documentclass[11pt,twoside]{amsart}

\usepackage{enumerate}
\usepackage[all]{xy}
\usepackage[hyperindex=true,plainpages=false,colorlinks=false,pdfpagelabels]{hyperref}
\usepackage{verbatim}
\usepackage{graphicx,color}
\usepackage{subfigure}
\theoremstyle{plain}
\newtheorem{The}{Theorem}
\newtheorem*{The*}{Theorem}
\newtheorem{Pro}{Proposition}[section]
\newtheorem{Lem}{Lemma}[section]
\newtheorem{Cor}{Corollary}[section]
\newtheorem*{Cor*}{Corollary}
\newtheorem*{Def}{Definition}

\theoremstyle{definition}
\newtheorem{Rem}{Remark}[section]

\newtheorem*{Rem*}{Remark}
\numberwithin{equation}{section}

\DeclareMathOperator{\End}{End}
\DeclareMathOperator{\Hom}{Hom}

\DeclareMathOperator{\SL}{SL}

\DeclareMathOperator{\SU}{SU}
\DeclareMathOperator{\Tr}{tr}             

\DeclareMathOperator{\Id}{Id}

\DeclareMathOperator{\vol}{vol}
\DeclareMathOperator{\im}{im}

\renewcommand{\Im}{\operatorname{Im}}

\newcommand{\dvector}[1]{{\left(\begin{matrix}#1\end{matrix}\right)}}
\DeclareMathOperator{\dbar}{\bar\partial}
\DeclareMathOperator{\del}{\partial}

\newcommand{\R}{\mathbb{R}}
\newcommand{\C}{\mathbb{C}}
\newcommand{\N}{\mathbb{N}}
\newcommand{\Z}{\mathbb{Z}}
\renewcommand{\P}{\mathbb{P}}
\renewcommand{\H}{\mathbb{H}}  

\newcommand{\CP}{\mathbb{CP}}

\setlength{\parindent}{0 pt}
\setlength{\parskip}{5 pt}
\addtolength{\textwidth}{6 em}
\addtolength{\oddsidemargin}{-3 em}
\addtolength{\evensidemargin}{-3 em}
\addtolength{\topmargin}{-3 em}
\addtolength{\textheight}{5 em}

\begin{document}

\title[A spectral curve approach to Lawson symmetric CMC surfaces of genus $2$]{A spectral curve approach to Lawson symmetric CMC surfaces of genus $2$}

\author{Sebastian Heller}




\date{\today}


\begin{abstract} 
Minimal and CMC surfaces in $S^3$ can be treated via their associated family of
flat $\SL(2,\C)$-connections.
In this the paper we parametrize the moduli space of flat $\SL(2,\C)$-connections
on the Lawson minimal surface of genus $2$ which are equivariant
with respect to certain symmetries of Lawson's geometric construction.
The parametrization uses Hitchin's abelianization procedure to write such connections explicitly in terms of flat line bundles on a complex $1$-dimensional torus.
This description is used to develop a spectral curve theory for the Lawson surface. This theory applies
as well to other CMC and minimal surfaces with the same holomorphic symmetries as the Lawson surface but different
Riemann surface structure. Additionally, we study the space of isospectral deformations of compact minimal surface
of genus $g\geq2$ and prove that it is generated by simple factor dressing.
 \end{abstract}

\maketitle

\setcounter{tocdepth}{1}
\tableofcontents


\section{Introduction}
\label{sec:intro}
The study of minimal surfaces in three dimensional space forms is among the oldest subjects in differential geometry.
 While minimal surfaces in euclidean 3-space are never compact,
 there exist compact minimal surfaces in $S^3.$
  In fact, it has been shown by Lawson \cite{L} that for every genus $g$
 there exists at least one embedded closed minimal surface in the 3-sphere.
 A slightly more general surface class is given by constant mean curvature (CMC) surfaces.
 Due to the Lawson correspondence the partial differential equations 
 describing minimal and CMC surfaces in $S^3$ can be treated in a uniform way.
Compact minimal and CMC surfaces of genus $0$ and $1$ are well-understood by now: The only CMC $2$-spheres in 
$S^3$ 
are the totally umbilic spheres
 as the Hopf differential vanishes. Furthermore, Brendle \cite{Br} has recently shown that the only embedded 
minimal torus in $S^3$ is the Clifford torus up to isometries. 
This was extended by Andrews and Li \cite{AL} who proved that the only embedded CMC tori in $S^3$ are the 
unduloidal rotational Delaunay tori.
Nevertheless, there exist compact immersed minimal and CMC tori in 
$S^3$ which are not congruent to the Clifford torus respectively to the Delaunay tori.
 First examples have been constructed by Hitchin \cite{H} via 
integrable systems methods. Moreover, all CMC tori in $S^3$ are 
constructed from algebro-geometric data defined on their associated spectral curve, see \cite{H,PS,B}.

 The study of minimal surfaces via integrable system methods is based on the associated 
$\C^*$-family of flat 
$\SL(2,{\C})$-connections $\nabla^\lambda,$ $\lambda\in \C^*.$
 Flatness of $\nabla^\lambda$ for all $\lambda$
in the spectral plane $\C^*$ is the gauge 
theoretic reformulation of the harmonic map equation.
Knowing the family of flat connections is tantamount to knowing the minimal surface, as the minimal surface is given by the gauge
between the trivial connections $\nabla^1$ and $\nabla^{-1}.$
Slightly more general, there also exists a family of flat connections associated to CMC surfaces in $S^3.$
They are given as the gauge between $\nabla^{\lambda_1}$ and $\nabla^{\lambda_2}$ for 
$\lambda_1\neq\lambda_2\in S^1\subset\C^*$
and have mean curvature $H=i\frac{\lambda_1+\lambda_2}{\lambda_1-\lambda_2}.$
In the abelian case of CMC 2-tori $\nabla^\lambda$ splits for generic $\lambda$ into a direct sum of flat connections on a line bundle and its dual. 
Therefore, the $\C^*$-family of flat $\SL(2,\C)$-connections associated to a CMC torus
is characterized by a spectral curve parametrizing the corresponding family of flat complex line bundles.
On surfaces of genus $g\geq2$ flat $\SL(2,\C)$-connections are generically irreducible and therefore they have 
non-abelian monodromy. 
In fact, every (compact) branched CMC surface of genus $g\geq2$ whose associated family of flat 
connections has abelian holonomy 
factors through a CMC torus or is a branched conformal covering of a round sphere \cite{Ge}.
Thus the abelian spectral curve theory for minimal and CMC tori are no longer applicable in the case
of compact immersed minimal and CMC surfaces of genus $g\geq2.$ 

 The aim of this paper is to develop what might be called an integrable systems theory for compact higher genus 
 minimal and CMC surfaces in $S^3$ based on its
 associated family of flat connections. The main benefit of this approach is that one can divide
 the construction and the study of minimal or CMC surfaces into three steps: 
 \begin{enumerate}[1.]
\item
 Write down (enough) flat $\SL(2,\C)$-connections
 on a given Riemann surface. 
 \item  Construct a family $\tilde\nabla^\lambda$ of flat 
 $\SL(2,\C)$-connections
 gauge equivalent (where the gauge is allowed to depend on the spectral parameter $\lambda$)  
 to a family of flat connections associated to a 
 CMC surface in $S^3.$ To ensure this, $\tilde\nabla^\lambda$ needs to be unitarizable for $\lambda\in S^1,$
 and trivial for $\lambda_1\neq\lambda_2\in S^1\subset \C^*$ and
 must have a special asymptotic behavior as $\lambda\to0.$
 \item
  Reconstruct an associated family of flat connections of a CMC surface from the gauge equivalent family. 
  \end{enumerate}
In a certain sense these steps occur in the integrable system approach to CMC tori \cite{H}. Here the 
 gauge class of a generic flat $\SL(2,\C)$-connection is determined by the holonomy of one of the eigenlines.
 The spectral curve parametrizes these holonomies and the gauge to the associated family can be determined with the 
 help of the eigenline bundle on the spectral curve.\\
 Similarly, the loop group approach to CMC surfaces put forward in
 \cite{DPW}, sometimes called the DPW method, starts with a family of holomorphic (or meromorphic) 
 $\SL(2,\C)$-connections on a Riemann surface. Typically, these connections are given by a $\lambda$-dependent 
 holomorphic (or meromorphic) $\mathfrak{sl}(2,\C)$-valued $1$-form called the DPW potential. The DPW potential has a special 
 asymptotic behavior for $\lambda\to0$ which guarantees the construction of a minimal surface as follows: a
  ($\lambda$-dependent) parallel frame for the family of holomorphic (or meromorphic) flat connections can be split
  into its unitary and positive parts by the loop group Iwasawa decomposition. 
  The unitary part is characterized by the property that it is unitary on the unit circle $S^1\subset\C^*$ and the positive 
  part extends to $\lambda=0$ in a special way.
  Then, the positive part is the gauge
  one is looking for, or equivalently, the unitary part is a ($\lambda$-dependent) parallel frame for a family of flat 
  connections associated to a minimal surface.\\
 It is well-known that every flat (smooth) $\SL(2,\C)$-connection on a compact Riemann surface is gauge equivalent
 (via a gauge which might have singularities) to a flat meromorphic connection, i.e., to a connection whose connection 
 $1$-form with respect to an arbitrary holomorphic frame is meromorphic. Nevertheless, it is impossible to
 parametrize meromorphic connections in a way such that one obtains a unique representative for every
 gauge class of flat $\SL(2,\C)$-connections.
 Therefore, the DPW potential does not need to exist for all $\lambda\in\C^*.$  Moreover, the meromorphic connections 
 (given by the DPW potential) need to be unitarizable for $\lambda\in S^1$ (i.e.,  unitary with 
 respect to an appropriate $\lambda$-dependent unitary metric). This reality condition leads to the problem of 
 computing 
 the monodromies of meromorphic connections, which cannot be done by now. The aim of this paper is to overcome 
 these problems, at least partially.

 The moduli space of flat $\SL(2,\C)$-connections on a compact Riemann surface of genus $2$ has, at its smooth 
 points, dimension $6g-6.$ There exist singular points, corresponding to reducible flat connections, which have to be 
 dealt with carefully, see \cite{G}. As we are studying holomorphic 
 families of connections (in the sense that the connection $1-$forms with respect to a fixed connection depend 
 holomorphically on $\lambda$), the moduli space needs to be equipped with a compatible complex structure. Moreover,
we need to determine the asymptotic behavior of the family of (gauge equivalence classes of) flat connections for 
$\lambda\to0.$ This seems to be difficult in the setup of character varieties, i.e., if we identify a gauge equivalence 
class of flat connections with the conjugacy class of the induced
holonomy representation of the fundamental group of the compact Riemann surface.
A more adequate picture of the 
moduli space of flat $\SL(2,\C)$-connections is given as an affine bundle over the "moduli space" of holomorphic 
structures of rank $2$ with trivial determinant. The projection of this bundle is given by taking (the isomorphism class
of) the complex anti-linear part of the connection.
This complex anti-linear part is a holomorphic structure, and for a generic flat connections it is even stable. 
Elements in a fiber of this affine bundle, which can be  represented by two 
flat connections with the same induced 
holomorphic structure, differ by a holomorphic $1$-form with values in the trace free endomorphism bundle. These 
$1$-forms are called Higgs fields and, as a
consequence of Serre duality, they are in a natural way the cotangent vectors of the moduli space of holomorphic 
structures, at least at its smooth points. 
The bundle is an affine holomorphic bundle and not isomorphic to a holomorphic vector bundle 
because it does not admit a holomorphic section. Nevertheless, by the Theorem of 
Narasimhan and Seshadri \cite{NS}, it has a smooth section (over the semi-stable part) which is given by the one 
to one correspondence between stable holomorphic structures and unitary flat connections. 

In addition to the study of the moduli spaces, we want to construct families of flat 
connections explicitly. This can be achieved by using Hitchin's abelianization, see \cite{H1,H2}.  The eigenlines of Higgs 
fields (with respect to some holomorphic structure $\dbar$)
whose determinant is given by the Hopf differential of the CMC surface are well-defined on a double covering of the 
Riemann surface. They determine points in an affine Prym variety and as line subbundles they intersect each other 
over the umbilics of the
minimal surface. Moreover, a flat connection with holomorphic structure $\dbar$ determines a meromorphic connection
on the direct sum of the two eigenlines of the Higgs field. The residue of this meromorphic connection can be 
computed explicitly, and the flat meromorphic connection is determined by algebraic geometric data 
on the double covering surface. Moreover, $\C^*$-families of flat connections can be written down in terms of a
spectral curve which double covers the spectral plane $\C^*.$ 
A double covering is needed as a holomorphic structure with
a Higgs field corresponds to two different eigenlines and these eigenlines come together at discrete spectral
 values. \\
 The spectral curve parametrizes the eigenlines of Higgs fields 
 $\Psi_\lambda\in H^0(M,K\End_0(V,\dbar^\lambda))$
 with $\det\Psi_\lambda=Q,$ where the holomorphic structure $\dbar^\lambda$ is the complex anti-linear part of the connection 
 $\nabla^\lambda.$
  In order to fix the (gauge equivalence classes of the) 
 flat connections $\nabla^\lambda$ additional spectral data are needed. They are given by anti-holomorphic 
 structures on the eigenlines, or, after fixing a special choice of a flat meromorphic connection on a line bundle in the 
 affine Prym variety, by a lift into the affine
 bundle of gauge equivalence classes of flat line bundle connections.
 Then, analogous to the case of tori, the asymptotic behavior for $\lambda\to 0$ of the family of flat connections can be 
 understood  explicitly: the spectral
curve branches over $0$ and the family of flat line bundle connections has a first order pole over $\lambda=0,$ see
Theorem \ref{asymptotic_at_0}. The spectral data must satisfy a certain reality condition imposed by the property 
that the connections $\nabla^\lambda$ are unitary for $\lambda\in S^1.$ 
 In contrast to the case of CMC tori this reality condition is hard to determine explicitly. 
Nevertheless, the reality condition is closely related to the geometry of the moduli spaces, see Theorem \ref{unitary_a}.
Once one has constructed such families of 
(gauge equivalence classes of) flat 
connections, one can construct minimal and CMC surfaces in $S^3$ by loop group factorization methods analogous to the
DPW method. It would be very interesting
to see whether these loop group factorizations can be made as explicit as in the case of tori via the eigenline 
construction of Hitchin \cite{H}.

In this paper we only carry out the details of the details of this program for the Lawson surface of genus $2.$  These methods easily generalize to the case of Lawson symmetric minimal and CMC surfaces of genus 
$2,$ i.e., those surfaces with the same holomorphic and space orientation preserving symmetries as the Lawson surface but with possibly different 
conformal type (determined by the cross ratio of the branch images of the threefold covering
 $M\to M/\Z_3\cong \P^1$). We shortly discuss this generalization in chapter \ref{A3}.
 As explained there one could in principle always exchange {\em minimal} by {\em CMC} and {\em Lawson surface} by {\em Lawson symmetric surface} within the paper.
 Moreover, the
definition of the spectral curve makes sense even in the case of a compact minimal or CMC surface of genus $g\geq2$
as long as the Hopf differential has simple zeros. In that case the asymptotic of the spectral data is analogous
to Theorem \ref{asymptotic_at_0}. The main difference to the general case is that we can describe the moduli space of
flat connections as an affine bundle over the moduli space of holomorphic structures explicitly, see Theorem 
\ref{abelianization}.

In Section \ref{holomorphic_structure_space}
we study the moduli space of those holomorphic structures of rank $2$ with trivial determinant that admit a flat 
connection
whose gauge equivalence classes are invariant under the symmetries of the Lawson surface of genus $2.$
 We show that this space is a projective line with a double point. In Section \ref{ch:hitchin_abelianization} we 
 parametrize 
 representatives of each isomorphism class in the above moduli space by using the eigenlines of special Higgs 
 fields. 
  This method is called Hitchin's abelianization. In our situation 
  the space of all eigenlines is given by the 1-dimensional square torus.
  By Hitchin's abelianization this torus double covers the moduli space
 of holomorphic structures away from the double point.
 This covering map
 will be crucial for the construction of a spectral curve later on.
 We use this description in Section \ref{sec:flat_connections}
 in order to parametrize the moduli space of flat $\SL(2,\C)$-connections whose gauge equivalence classes are invariant
  under the symmetries of the 
 Lawson surface. In Theorem \ref{abelianization} we prove an explicit 2:1 correspondence (away from a
 co-dimension $1$ subset corresponding to the double point of the moduli space of holomorphic structures)
 between flat $\C^*$-connections on the above mentioned square torus and the 
 moduli space of flat connections
whose gauge equivalence classes are invariant under the symmetries of the Lawson surface of genus $2.$
 This study will be completed in Section \ref{exceptional_connections} where we consider
 flat connections whose underlying holomorphic structures do not admit Higgs fields whose determinant is equal to the
 Hopf differential of the Lawson surface.\\
In Section \ref{sec:the_spectral_data} we define the spectral
curve associated to a minimal surface in $S^3$ which has the conformal type and the holomorphic symmetries of the 
Lawson surface of genus $2$ (Proposition \ref{The definition of the spectral curve}). The spectral curve is equipped with 
a meromorphic
 lift into the affine bundle of isomorphy classes of flat line bundle connections on the square torus. 
 This lift determines the gauge equivalence 
 classes of the flat connections.  The spectral data satisfy two important properties, see Theorem \ref{asymptotic_at_0}. 
 Firstly, they have a certain asymptotic at $\lambda=0.$ 
 Moreover, the spectral data must satisfy a reality condition which is related to 
 the geometry of the moduli space of stable bundles, see Theorem \ref{unitary_a}. 
 We prove a general theorem (\ref{general_reconstruction})
  about the reconstruction of minimal surfaces out of those families of flat connections $\tilde\nabla^\lambda$ as
  described in step $2$ above.\\
  Similar to the case of tori, compact minimal and CMC surfaces of higher genus are in general not uniquely determined
  by the knowledge of the gauge equivalence classes of $\nabla^\lambda$ for all $\lambda\in\C^*.$ 
  We show in Theorem \ref{dressing} that all different minimal immersions with
   the same map $\lambda\mapsto[\nabla^\lambda]$ into the moduli space of flat connections and with the same induced Riemann surface structure 
   are generated by simple factor dressing (as defined for example in \cite{BDLQ}).
     Finally, we prove in Theorem \ref{reconstruction_spec} that minimal
  surfaces with the symmetries of the Lawson genus $2$ surface can be  
  reconstructed uniquely from spectral data satisfying the
  conditions of Theorem \ref{asymptotic_at_0}.
  Moreover,  we give an energy formula for those minimal surfaces 
  in terms of their spectral data.
 \\
In the appendix, we shortly recall the gauge theoretic reformulation of the minimal surface equations in $S^3$ due to 
Hitchin \cite{H} which leads to the associated family of flat connections. We also describe the construction of the Lawson
minimal surface of genus $2.$ 

The author thanks Aaron Gerding, Franz Pedit and Nick Schmitt for helpful
discussions. This research was supported by the German Research
Foundation (DFG) Collaborative Research Center SFB TR 71.
 Part
of the work for the paper was done when the author was a member of the trimester
program on ÒIntegrability in Geometry and Mathematical PhysicsÓ at the Hausdorff
Research Institute in Bonn. He would like to thank the organizers for the
invitation and the institute for the great working environment.

\section{The moduli space of Lawson symmetric holomorphic structures}\label{holomorphic_structure_space}
Before studying the associated family of flat $\SL(2,\C)$-connections \[\lambda\mapsto \nabla^\lambda\] for a given compact oriented minimal or CMC surface in 
$S^3$ (see Appendix \ref{A1} or chapter \ref{A3} in the case of CMC surfaces),
we need to understand the moduli space of gauge equivalence classes of flat  $\SL(2,\C)$-connections on the surface.
We consider it as an affine bundle over the moduli space of isomorphism classes of holomorphic structures 
$(V,\dbar)$ of rank $2$ with trivial determinant over the Riemann surface. The complex structure is the one 
induced by the minimal immersion and the projection is given by taking the complex anti-linear part 
\[\nabla'':=\frac{1}{2}(\nabla+i*\nabla)\]
of the flat connection $\nabla.$ 
The difference $\Psi=\nabla^2-\nabla^1\in\Gamma(M,K\End_0(V))$ between two flat $\SL(2,\C)$-connections 
$\nabla^1$ and $\nabla^2$ with the same underlying holomorphic structure $\dbar=(\nabla^i)''$ satisfies 
\[0=F^{\nabla^2}=F^{\nabla^1}+d^\nabla\Psi=\dbar\Psi.\]
Therefore, the fiber of the affine bundle over a fixed isomorphism class of holomorphic structures (represented by the 
holomorphic structure $\dbar$)
is given by the space of Higgs fields \[H^0(M,K\End_0(V,\dbar)),\]
i.e., the space of holomorphic trace free endomorphism valued $1$-forms on $M.$ By Serre duality, this is naturally 
isomorphic to the cotangent space of the moduli space of holomorphic structures, at least at its smooth points.

 In this paper we mainly focus on the Lawson minimal surface $M$ of genus $2.$
Therefore we start by studying those holomorphic structures of rank $2$ on $M$ which can occur as the 
complex anti-linear parts of a connection $\nabla^\lambda$ in the associated family of $M.$ As we will see, this 
simplifies the study of the moduli spaces and allows us to find explicit formulas for flat connections 
with a given underlying holomorphic structure.

The complex structure of the Lawson surface of genus $2$ is given by (the compactification of) the complex curve
\begin{equation}\label{curve}
y^3=\frac{z^2-1}{z^2+1}.
\end{equation}
As a surface in $S^3$ it has a large group of extrinsic symmetries, see Appendix \ref{A2}. We will focus on the 
symmetries 
which are holomorphic on $M$ and orientation preserving in $S^3.$ The reason for this restriction relies on the fact that 
only those give rise to symmetries of the individual flat connections $\nabla^\lambda.$ 
As a group, they are generated 
by the following automorphisms, where the equations are written down with respect to the coordinates $y$ and $z$ of
\eqref{curve}:
\begin{itemize}
\item the hyper-elliptic involution $\varphi_2$ of the surface of genus $2$ which is given by
\[(y,z)\mapsto(y,-z);\]
\item the automorphism $\varphi_3$  satisfying
\[\varphi_3(y,z)=(e^{\frac{2}{3}\pi i} y,z);\]
\item the composition $\tau$ of the reflections at the spheres $S_1$ and $S_2$ is given by 
\[(y,z)\mapsto(e^{\frac{1}{3}\pi i}\frac{1}{y},\frac{i}{z}).\]
\end{itemize}

Every single connection $\nabla^\lambda$ is gauge equivalent to $\varphi_2^*\nabla^\lambda,$ 
$\varphi_3^*\nabla^\lambda$ and $\tau^*\nabla^\lambda.$  This can be deduced from the construction of the associated 
family of flat connections,
see \cite{He} for details. 
\begin{Def}
A $\SL(2,\C)$-connection $\nabla$ on $M$ is called Lawson symmetric, if $\nabla$ is gauge equivalent to 
$\varphi_2^*\nabla,$ $\varphi_3^*\nabla$ and $\tau^*\nabla.$ 
Similarly, a holomorphic structure $\dbar$ of rank $2$ with trivial determinant on $M$ is called Lawson symmetric if it is 
isomorphic to $\varphi_2^*\dbar,$ $\varphi_3^*\dbar$ and $\tau^*\dbar.$ 
\end{Def}

We first determine which holomorphic structures occur in the family
\[\lambda\mapsto \dbar^\lambda:=(\nabla^\lambda)''=\frac{1}{2}(\nabla^\lambda+i*\nabla^\lambda)\]
associated to the Lawson surface.
 As $\nabla^\lambda$ is generically irreducible (see \cite{He1}), and special unitary
for $\lambda\in S^1,$ $\dbar^\lambda$ is generically stable: A holomorphic bundle of rank $2$ of degree $0$ is 
(semi-)stable
if every holomorphic line sub-bundle has negative (non-positive) degree, see \cite{NS} or \cite{NR}.
On a compact Riemann surface of genus $ 2$ the moduli space of stable holomorphic structures with trivial determinant 
on a vector bundle of rank $r=2$ can be identified with an open dense subset of a projective $3$-dimensional space, 
see \cite{NR}: the set of those holomorphic line bundles, which are dual to a holomorphic line subbundle of degree
 $-1$ in the holomorphic rank $2$ bundle, is given by the support of a divisor 
which is linear equivalent to 
twice the $\Theta$-divisor in the Picard variety $Pic_1(M)$ of holomorphic line bundles of degree $1.$  This divisor
uniquely determines the rank $2$ bundle up to isomorphism if the bundle is stable. 
Therefore the moduli space of stable holomorphic structures of rank $2$ with trivial 
determinant can be considered as a subset of the projective space of the $4$-dimensional space 
$H^0(Jac(M),L(2\Theta))$ of $\Theta$ functions
 of rank $2$ on the Jacobian of $M.$
The complement of this subset in the projective space
is the Kummer surface associated to the Riemann surface of genus $2.$ The points on the Kummer surface can be 
identified with the S-equivalence classes of strictly semi-stable holomorphic structures.
Recall that the S-equivalence class of a stable holomorphic structure is just its isomorphism class but 
that S-equivalence identifies the strictly semi-stable holomorphic direct sum bundles $V=L\oplus L^*$ 
(where $deg(L)=0$) with 
nontrivial extensions $0\to L\to V\to L^*\to 0.$ An extension $0\to L\to V\to L^*\to 0$ 
(where $L$ is allowed to have arbitrary degree) is given by
a holomorphic structure of the form
\[\dbar=\dvector{\dbar^L & \gamma \\ 0 &\dbar^{L^*} },\]
where $\gamma\in\Gamma(M,\bar K\Hom(L^*,L)).$ It is called non-trivial if the holomorphic structure is not isomorphic to 
the holomorphic direct sum $L\oplus L^*.$ This is measured by the extension class $[\gamma]\in H^1(M,\Hom(L^*,L)).$ 
Note that the isomorphism class of the holomorphic bundle $V$ given by an extension $0\to L\to V\to L^*\to0$ with 
extension 
class $[\gamma]$ is already determined by $L$ and $\C[\gamma]\in \P H^1(M,\Hom(L^*,L)).$

\begin{Pro}\label{projective_line}
Let $\mathcal M\subset\P^3=\P H^0(Jac(M),L(2\Theta))$ be the space of S-equivalence classes of semi-stable Lawson
symmetric
holomorphic structures over the Lawson surface $M.$ 
Then the connected component $\mathcal S$ of $\mathcal M$ containing the trivial holomorphic structure
$(\underline \C^2,d'')$ is given by a projective line in $\P^3.$
\end{Pro}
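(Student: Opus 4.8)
The plan is to realize $\mathcal M$ as the fixed-point locus of a finite group acting linearly and to identify the component through $[\underline\C^2]$ with a projectivized eigenspace. By the Narasimhan--Ramanan description recalled above, the moduli space of semistable holomorphic structures of rank $2$ with trivial determinant is $\P^3=\P H^0(Jac(M),L(2\Theta))$, and each automorphism $\psi\in\{\varphi_2,\varphi_3,\tau\}$ acts on it by pullback $[\dbar]\mapsto[\psi^*\dbar]$. Since $\psi$ acts on $Jac(M)$ preserving (a translate of) the theta divisor, this is the projectivization of a linear action of $G=\langle\varphi_2,\varphi_3,\tau\rangle$ on the four-dimensional space $H^0(Jac(M),L(2\Theta))$. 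A semistable structure is Lawson symmetric precisely when its class is fixed by all of $G$, so $\mathcal M$ is the $G$-fixed locus in $\P^3$, a disjoint union of linear subspaces, one for each subspace of $\C^4$ on which $G$ acts through a single character.

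First I would reduce the group. The hyperelliptic involution $\varphi_2$ acts as $-1$ on $Jac(M)$, and because $L(2\Theta)$ is totally symmetric it acts trivially on $H^0(Jac(M),L(2\Theta))$; equivalently $\varphi_2^*\dbar\cong\dbar$ for every semistable structure, so $\varphi_2$ imposes no condition. A direct computation in the coordinates of \eqref{curve} gives the dihedral relation
\begin{equation*}
\tau\varphi_3\tau^{-1}=\varphi_3^{-1},
\end{equation*}
so $G$ acts on $\C^4=H^0(Jac(M),L(2\Theta))$ through a quotient of $S_3=\langle\varphi_3,\tau\rangle$. The fixed-point analysis now becomes representation theory of $S_3$: projectivizing, the one-dimensional (trivial or sign) isotypic blocks are pointwise fixed, whereas no line in a standard-isotypic block is $G$-invariant (a common eigenline would be a character subrepresentation, and $\Hom$ of a character into a sum of standard representations vanishes). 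Hence $\mathcal M$ is the union of the projectivized trivial- and sign-isotypic subspaces, and since $[\underline\C^2]$ is $G$-invariant and corresponds to an invariant second-order theta function, the component $\mathcal S$ through it is the projectivization of the invariant subspace $H^0(Jac(M),L(2\Theta))^{G}$.

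It therefore remains to show that $H^0(Jac(M),L(2\Theta))^{G}$ is exactly two-dimensional, so that $\mathcal S\cong\P^1$. This is the computational heart. I would compute the eigenvalues of $\varphi_3$ on $\C^4$, necessarily among $\{1,\zeta,\zeta^2\}$ with $\zeta=e^{2\pi i/3}$, together with the $\tau$-action, from the explicit action of $\varphi_3$ and $\tau$ on $Jac(M)$ and on second-order theta functions; the required traces follow from the holomorphic Lefschetz fixed-point formula on $(Jac(M),L(2\Theta))$, whose fixed points are governed by the four branch points $z=\pm1,\pm i$ of $M\to M/\varphi_3\cong\P^1$. More transparently, a $G$-invariant semistable structure descends to a rank-$2$ parabolic structure on the quotient orbifold $\P^1$ with those four marked branch points, and the moduli of such parabolic bundles with trivial determinant is a projective line; carrying $[\underline\C^2]$ and its deformations through this correspondence exhibits an explicit $\P^1$ of Lawson symmetric structures that one checks fills out the invariant isotypic subspace.

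The main obstacle is precisely this dimension count: verifying the invariant part is two-dimensional and that the resulting $\P^1$ contains $[\underline\C^2]$ while staying disjoint from the sign-isotypic fixed points. Two technical points accompany it. First, one argues with $S$-equivalence classes, so $V\cong\psi^*V$ must be read on the Kummer boundary as well as on the stable interior; since $\psi$ acts compatibly on both the fixed-locus description is unaffected, but $[\underline\C^2]$ is strictly semistable and lies on the Kummer surface, so its position on $\mathcal S$ must be tracked. Second, the way this line meets the Kummer surface accounts for the double point mentioned in the introduction; this intersection is not needed for the assertion that $\mathcal S$ is a projective line, but it locates the semistable boundary of $\mathcal S$ and should be computed when situating $[\underline\C^2]$.
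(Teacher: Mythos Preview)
Your approach is genuinely different from the paper's and conceptually sound, but you do not complete the step you yourself flag as the main obstacle, and the paper's argument sidesteps that obstacle entirely.

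Your reduction is correct: after observing that $\varphi_2$ acts trivially on second-order theta functions, the projective fixed locus of the $S_3$-action on $\P^3$ is the disjoint union of the projectivized trivial- and sign-isotypic components of $\C^4$, and $\mathcal S$ is one of them. But the proof then hinges on showing that the relevant isotypic piece is exactly two-dimensional. You propose holomorphic Lefschetz on $(Jac(M),L(2\Theta))$ or a parabolic-bundle identification on $\P^1$, yet neither is carried out; nothing you have written rules out, for instance, a decomposition $\mathbf 1^{\oplus 3}\oplus\mathrm{sgn}$ (making $\mathcal S\cong\P^2$) or $\mathbf 1\oplus\mathrm{sgn}\oplus\mathrm{std}$ (making $\mathcal S$ a single point). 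As it stands this is a program rather than a proof.

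The paper instead pins down $\dim\mathcal S$ without computing the representation at all. The lower bound $\dim\mathcal S\geq 1$ comes for free from the non-constant holomorphic family $\lambda\mapsto[\dbar^\lambda]$ supplied by the Lawson minimal surface itself. The upper bound comes from intersecting $\mathcal S$ with the degree-$4$ Kummer surface (the strictly semi-stable locus): an elementary divisor argument on degree-$0$ line subbundles $L(P-Q)$, using only the invariance under $\varphi_2,\varphi_3,\tau$, shows that the only Lawson symmetric strictly semi-stable $S$-equivalence classes are $[\underline\C^2]$ (a node of the Kummer quartic) together with $[L(P_1-P_2)\oplus L(P_2-P_1)]$ and $[L(P_1-P_4)\oplus L(P_4-P_1)]$. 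A projective plane would meet the quartic in a curve, not in finitely many points, so $\mathcal S$ must be a line. This is more elementary than either of your proposed computations and also more informative: it simultaneously locates the three semi-stable points on $\mathcal S$, which you note you would have to track separately.
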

\begin{proof}
The fix point set of any of these three symmetries is given by the union of projective subspaces of $\P^3.$ Clearly, the 
common fix point set contains a projective subspace of dimension $\geq1,$ as $\lambda\mapsto\dbar^\lambda$ is a 
non-constant holomorphic map into this space.

The space of S-equivalence classes of semi-stable non-stable bundles is the Kummer surface of $M$ in $\P^3.$ It has
 degree $4,$ and $16$ double points. These double points are given by extensions of self-dual line bundles 
 $L$ by itself. In order to see that $\mathcal S$ is a projective line it is enough to show that 
 the only strictly semi-stable bundles $V,$ whose isomorphism classes are 
invariant under $\varphi_2,$ $\varphi_3$ and $\tau,$ are the trivial rank two bundle $\underline\C^2$ (which is a double 
point in the Kummer surface) and the direct sum bundles 
\[L(P_1-P_2)\oplus L(P_2-P_1),\;\; L(P_1-P_4)\oplus L(P_4-P_1),\]
where $P_1,..,P_4\in M$ are the zeros of the Hopf differential of $M.$ So let $L$ be a holomorphic line sub-bundle of 
$V$ of degree $0.$ Because $M$ has genus $2$ there exists two points $P,Q\in M$ such that $L$ is given as 
the line bundle $L(P-Q)$ associated to the divisor $P-Q.$ If $P=Q$ then $V$ is in the S-equivalence class of 
$\underline\C^2.$
If $P\neq Q$ then $\varphi_2^*L(P-Q)$ is either isomorphic to $L(P-Q)$ or $L(Q-P),$ as $\varphi_2^*V$ and $V$ are 
S-equivalent. Clearly, the same holds for $\tau,$ and as $\varphi_3$ is of order $3$ we even get that
$\varphi_3^*L(P-Q)=L(P-Q).$
From these observations we  
deduce that the points $P$ and $Q$ are fixed points of $\varphi_3,$ and as a consequence $V$ is S-equivalent to
one of the above mentioned direct sum bundles.
\end{proof}
The next proposition shows that we do not need to care about S-equivalence of holomorphic bundles.
\begin{Pro}\label{semi-stable}
Every Lawson symmetric strictly semi-stable holomorphic rank $2$ bundle $V\to M$  is isomorphic to 
the direct sum of two holomorphic line bundles.
\end{Pro}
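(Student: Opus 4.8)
The plan is to upgrade the $S$-equivalence classification of Proposition~\ref{projective_line} to an honest isomorphism statement by excluding the non-split representative in each $S$-equivalence class. By Proposition~\ref{projective_line} a Lawson symmetric strictly semi-stable $V$ is $S$-equivalent either to $\underline\C\oplus\underline\C$, or to one of $L(P_1-P_2)\oplus L(P_2-P_1)$ and $L(P_1-P_4)\oplus L(P_4-P_1)$. Hence $V$ is either the corresponding direct sum, in which case there is nothing to prove, or a non-trivial extension $0\to L\to V\to L^*\to 0$ with the same associated graded. So it suffices to show that no such non-trivial extension is Lawson symmetric.

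The main mechanism I would use is uniqueness of the destabilising subbundle. If $L\not\cong L^*$, then in a non-trivial extension $0\to L\to V\to L^*\to 0$ of degree-zero line bundles $L$ is the \emph{unique} holomorphic line subbundle of degree $0$: any degree-$0$ subbundle $L'\subset V$ maps to $L^*$ either by an isomorphism, which would split the sequence, or by $0$, forcing $L'=L$. For the two non-trivial classes the relevant $L=L(P_i-P_j)$ is a non-zero $3$-torsion point of $Jac(M)$: recall from the proof of Proposition~\ref{projective_line} that $P_1,\dots,P_4$ are the fixed points of $\varphi_3$, i.e.\ the totally ramified points of $z\colon M\to\P^1$, so $3P_i\sim 3P_j$ and hence $L^{3}\cong\underline\C$; since $P_i\not\sim P_j$ on a surface of genus $2$ we have $L\neq\underline\C$, and a non-zero $3$-torsion bundle satisfies $L\not\cong L^*$. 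Now the function $y$ vanishes at the two fixed points of $\varphi_3$ lying over $z=\pm1$ and has simple poles at the two lying over $z=\pm i$, which yields a linear equivalence between these two pairs; combined with the permutations of $P_1,\dots,P_4$ induced by $\varphi_2$ $(z\mapsto -z)$ and $\tau$ $(z\mapsto i/z,\ y\mapsto e^{\pi i/3}/y)$ one checks that in each of the two classes at least one of $\varphi_2,\tau$ interchanges $L$ and $L^*$. Such a symmetry $\varphi$ carries the unique degree-$0$ subbundle $L$ of $V$ to the unique degree-$0$ subbundle $\varphi^*L\cong L^*$ of the non-trivial extension $\varphi^*V$; since $L^*\not\cong L$ this gives $\varphi^*V\not\cong V$, contradicting Lawson symmetry. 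Hence $V$ is the direct sum.

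The remaining case $L=\underline\C$ (the class of $\underline\C^2$) requires a different argument, since here $L=L^*$ is fixed by all three symmetries. Instead I would use that the isomorphism class of an extension $0\to\underline\C\to V\to\underline\C\to0$ is determined by the line $\C[\gamma]\in\P H^1(M,\underline\C)\cong\P H^0(M,K)^*$, and that Lawson symmetry forces $\C[\gamma]$ to be fixed by the induced actions of $\varphi_3$ and $\tau$ (the hyper-elliptic involution $\varphi_2$ acts as $-\Id$ on $H^0(M,K)$, hence trivially on the projectivisation). On $H^0(M,K)$ the automorphism $\varphi_3$ acts diagonally with two distinct eigenvalues, with eigenforms $\tfrac{dz}{y(z^2+1)}$ and $\tfrac{dz}{y^2(z^2+1)}$, and a direct substitution shows that $\tau$ interchanges these two eigenlines. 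Dually, on $\P H^1(M,\underline\C)$ the only $\varphi_3$-fixed points are the two eigenlines and $\tau$ swaps them, so $\varphi_3$ and $\tau$ have no common fixed point. Therefore no non-zero extension class is Lawson symmetric, $[\gamma]=0$, and $V\cong\underline\C^2$.

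The main obstacle is the explicit equivariance bookkeeping underlying the middle step: pinning down the permutation of $P_1,\dots,P_4$ and the linear equivalences between them so as to see which of $\varphi_2,\tau$ sends $L$ to $L^*$, together with the computation of the $\varphi_3$- and $\tau$-action on $H^0(M,K)$ in the last case. Once these explicit actions are in hand, the rest is the formal uniqueness-of-subbundle and common-fixed-point argument.
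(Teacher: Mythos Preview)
Your proposal is correct and follows essentially the same strategy as the paper: rule out the non-split extension in each of the three $S$-equivalence classes by showing some Lawson symmetry fails to preserve the relevant invariant (the destabilising line $L$ in the $L(P_i-P_j)$ cases, the extension class line in $\P H^1(M,\underline\C)$ in the $\underline\C^2$ case). The paper is terser---it simply asserts $\varphi_2^*L(P_i-P_j)=L(P_j-P_i)$ (which for $(i,j)=(1,4)$ already uses the linear equivalence $P_1+P_2\sim P_3+P_4$ you spell out) and that no line in $H^0(M,K)$ is invariant under all three symmetries---whereas you make the uniqueness-of-subbundle step and the $\varphi_3,\tau$ action on $H^0(M,K)$ explicit; in particular your observation that $\varphi_2$ alone already suffices for both non-trivial classes (so the hedge ``at least one of $\varphi_2,\tau$'' is not needed) matches the paper's choice.
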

\begin{proof}
As we have seen in the proof of the previous theorem $V$ is S-equivalent to one of the holomorphic rank $2$ bundles
 $\underline \C^2,$ $L(P_1-P_2)\oplus L(P_2-P_1)$ and $ L(P_1-P_4)\oplus L(P_4-P_1).$ As
  \[\varphi_2^*L(P_i-P_j)= L(P_j-P_i)\neq L(P_i-P_j)\]
for $i\neq j$ we see that $V$ cannot be a non-trivial extension of $L(P_i-P_j)$ by its dual $L(P_j-P_i).$ It remains
to consider the case where $V$ is S-equivalent to $\underline\C^2.$ Then the holomorphic structure of $V$ is given by
\[\dvector{\dbar^\C& \gamma\\  0 & \dbar^\C}.\]
Here $\gamma\in\Gamma(M,\bar K)$ and the projective line of its cohomology class in $H^1(M,\C)$ is an invariant of 
the isomorphism class of $V.$ This projective line is determined by its annihilator in $H^0(M,K)=H^1(M,\C)^*.$ 
The annihilator of $[\gamma]$ is $H^0(M,K)$ exactly in the case where $V$ is (isomorphic to) the holomorphic direct 
sum $\C^2\to M,$
and otherwise it is a line in $H^0(M,K).$
Since $V$ is isomorphic to $\varphi_2^*V,$ $\varphi_3^*V$ and $\tau^*V$ 
this line would be invariant under $\varphi_2,$ $\varphi_3$ and $\tau$ which leads to a contradiction. 
\end{proof}

\subsection{Non semi-stable holomorphic structures}\label{non_semi_stable}
It was shown in \cite{He1} that for a generic $\lambda\in\C^*$ the holomorphic structure $\dbar^\lambda$ is stable.
Nevertheless there can exist special $\lambda\in\C^*$ such that $\dbar^\lambda$ is neither stable nor semi-stable.
We now study which non semi-stable holomorphic structures admit Lawson symmetric flat connections. 

Let $\nabla$ be a flat, Lawson symmetric $\SL(2,\C)$-connection on a complex rank $2$ bundle over $M$ such that 
$\nabla''=\dbar$ is not semi-stable. By assumption, there exists a holomorphic line subbundle  $L$ of $(V,\dbar)$ of 
degree $\geq1.$ The second fundamental form
\[\beta=\pi^{V/L}\circ\nabla_{\mid L}\in \Gamma(KHom(L, V/L))=\Gamma(KL^{-2})\] 
of $L$ with respect to $\nabla$  is holomorphic by flatness of 
$\nabla.$ As $deg(L)\geq1,$ $L$ cannot be a parallel subbundle because in that case it would inherit a flat connection. 
This implies $\beta\neq0.$ Therefore $L^{-2}=K^{-1}$ which means that $L$ is a spin bundle of $M.$ The only spin 
bundle $S$ of $M$ which is isomorphic to $\varphi_2^*S,$ $\varphi_3^*S$ and $\tau^*S$ is given by 
\[S=L(Q_1+Q_3-Q_5),\]
see \cite{He}. As there exists a flat connection with underlying holomorphic structure $\dbar,$ the bundle
$(V,\dbar)$ cannot be isomorphic to the holomorphic direct sum $S\oplus S^*\to M.$
Therefore it is given by a non-trivial extension
$0\to S\to V\to S^*\to 0.$ As $H^1(M,S^2)$ is $1$-dimensional, a non semi-stable holomorphic structure admitting a 
flat, Lawson symmetric $\SL(2,\C)$-connection is already unique up to isomorphism.
A particular choice of such a flat connection $\nabla$ is given by the uniformization connection, see \cite{H2}:
Consider the holomorphic direct sum $V=S\oplus S^*\to M,$ where $S$ is the spin bundle mentioned above. 
On $M$ there exists a unique Riemannian metric of constant curvature $-4$ in the conformal class of the Riemann 
surface $M.$ This Riemannian metric induces spin connections and unitary metrics on $S$ and 
$S^*.$ Let $\Phi=1\in H^0(M,K\Hom(S,S^*))$ and $\Phi^*=\vol$ be its dual with respect to the metric. Then
\begin{equation}\label{uniformization_connection}
\nabla^u=\nabla=\dvector{\nabla^{spin}& \vol\\  1 & \nabla^{spin^*}},
\end{equation}
is flat. Moreover, it is also Lawson symmetric. This can easily be deduced from the uniqueness of the conformal 
Riemannian metric of constant curvature $-4.$  The holomorphic structure $\nabla''$ is clearly given by the non-trivial 
extension $0\to S\to V\to S^*\to 0.$ 
\begin{Pro}\label{non_semi_stable_connection}
Every flat, Lawson symmetric connection $\nabla$ on $M,$ whose underlying holomorphic structure $\nabla''$ is not 
semi-stable, is gauge equivalent to
\[\dvector{\nabla^{spin}& C\, Q+ \vol\\  1 & \nabla^{spin^*}},\]
where $\nabla^{spin}$ and $\vol$ are the spin connection and the volume form of the conformal metric of constant 
curvature $-4$ on $M,$
$C\in\C$ and $Q$ is the Hopf differential of the Lawson surface.
\end{Pro}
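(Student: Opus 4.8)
The plan is to fix the holomorphic structure, parametrize all flat Lawson symmetric connections inducing it by holomorphic Higgs fields relative to the uniformization connection, and then remove the remaining freedom by gauge transformations and the symmetry conditions. By the discussion preceding the proposition, the holomorphic structure $\dbar=\nabla''$ is, up to isomorphism, the unique non-trivial extension $0\to S\to V\to S^*\to0$, and the uniformization connection $\nabla^u$ of \eqref{uniformization_connection} is one flat Lawson symmetric connection with $(\nabla^u)''=\dbar$. After a holomorphic bundle isomorphism (a gauge transformation, under which the statement is invariant) I may assume $\nabla''=(\nabla^u)''=\dbar$. Then $\Psi:=\nabla-\nabla^u$ has vanishing anti-linear part and, by flatness of both connections, satisfies $\dbar\Psi=0$; hence $\Psi\in H^0(M,K\End_0(V,\dbar))$ is a holomorphic Higgs field, and every flat connection with this holomorphic structure has the form $\nabla^u+\Psi$.

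First I would compute $H^0(M,K\End_0(V,\dbar))$ from the extension. In the smooth splitting $V=S\oplus S^*$ write $\Psi=\dvector{\alpha & \beta\\ \gamma' & -\alpha}$ with $\alpha\in\Gamma(M,K)$, $\beta\in\Gamma(M,K^2)$ (using $\Hom(S^*,S)=S^2=K$) and $\gamma'\in\Gamma(M,\mathcal O)$ (using $\Hom(S,S^*)=S^{-2}=K^{-1}$), and decompose $\dbar=\dbar_0+N$ with $\dbar_0$ block-diagonal and $N$ the nilpotent extension term given by $\gamma=\vol$. The holomorphicity $\dbar^{\End}\Psi=\dbar_0\Psi+[N,\Psi]=0$ then splits into: $\gamma'$ holomorphic, hence a constant; the diagonal equation $\dbar_0\alpha=-\gamma'\gamma$; and an off-diagonal equation expressing $\dbar_0\beta$ through $\alpha\gamma$. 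Because the extension is non-trivial, $[\gamma]\neq0$ in the one-dimensional space $H^1(M,S^2)$, so the diagonal equation is solvable only for $\gamma'=0$. Thus $\gamma'=0$, $\alpha\in H^0(M,K)$ is a holomorphic one-form, and $\beta$ is then determined up to a holomorphic quadratic differential in $H^0(M,K^2)$.

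The key simplification is that the diagonal part $\alpha$ is pure gauge. The unipotent holomorphic automorphisms $\mathrm{id}+n$ with $n=\dvector{0 & b\\ 0 & 0}$ and $b\in H^0(M,S^2)=H^0(M,K)$ preserve $\dbar$ and fix the lower-left entry $\Phi=1$ of $\nabla^u$, and their action on $\nabla^u$ shifts the diagonal Higgs part by a nonzero multiple of $b$ (arising from the commutator of $n$ with $\Phi$). Since $b$ ranges over all holomorphic one-forms, this cancels $\alpha$. After this gauge I may assume $\alpha=0$; the off-diagonal equation then forces $\beta$ to be holomorphic, so that the connection takes the form $\dvector{\nabla^{spin}& \beta+\vol\\ 1 & \nabla^{spin^*}}$ with $\beta\in H^0(M,K^2)$.

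It remains to impose Lawson symmetry, which I expect to be the main obstacle. Because $\nabla^u$ is Lawson symmetric, the symmetries act on the three-dimensional space $H^0(M,K^2)$ through the gauges $g_{\varphi_2},g_{\varphi_3},g_\tau$ realizing $\varphi^*\nabla^u\cong\nabla^u$, and $\nabla^u+\Psi$ is Lawson symmetric exactly when $\beta$ is an eigenvector of each $\varphi^*$ with eigenvalue matching the character by which $g_\varphi$ scales the $S^*\to S$ slot. For the hyperelliptic genus $2$ surface $H^0(M,K^2)=\operatorname{Sym}^2 H^0(M,K)$, so $\varphi_2$ (acting as $-1$ on one-forms) acts trivially here, while the order three symmetry $\varphi_3$ splits $H^0(M,K^2)$ into eigenlines for the three cube roots of unity. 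Tracking the character contributed by $g_{\varphi_3}$ singles out exactly one of these lines, and one checks compatibility with $\tau$. This invariant line is spanned by the Hopf differential $Q$, which is Lawson symmetric precisely because the Lawson immersion is equivariant. Hence $\beta=C\,Q$ for some $C\in\C$, yielding the asserted normal form. The delicate point throughout is the bookkeeping of the gauge characters $g_\varphi$, since it is this data, rather than the bare action on quadratic differentials, that distinguishes the line $\C Q$ from the remaining $\varphi_3$-eigenlines.
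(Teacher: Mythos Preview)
Your argument follows the same route as the paper's proof: write $\nabla=\nabla^u+\Psi$ with $\Psi$ a holomorphic Higgs field, show the lower-left entry vanishes, gauge away the diagonal by a unipotent automorphism $\dvector{1&\alpha\\0&1}$ with $\alpha\in H^0(M,K)$, and then use the Lawson symmetries to force the remaining upper-right quadratic differential into $\C Q$. Your justification for $\gamma'=0$ via $[\gamma]\neq0$ in $H^1(M,K)$ is exactly the content behind the paper's terse ``this shows that $c=0$,'' and the unipotent gauge step is identical.

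The only place you diverge is in tone rather than substance: you flag the ``bookkeeping of the gauge characters $g_\varphi$'' as the delicate point, whereas the paper simply asserts that the symmetry condition on $b$ is invariance $\varphi^*b=b$ as a quadratic differential. In fact no character twist appears. Because $\nabla^u$ is built entirely from the canonical constant-curvature metric, the isomorphism $g_\varphi$ realizing $\varphi^*\nabla^u\cong\nabla^u$ is the natural identification $\varphi^*(S\oplus S^*)\cong S\oplus S^*$; tracking the lower-left entry $1\in H^0(M,K\Hom(S,S^*))=H^0(M,\mathcal O)$, which is manifestly $\varphi$-invariant, pins down $g_\varphi$ up to the sign $\pm\Id$, and this acts trivially on the $K^2$-slot. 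So the condition really is $\varphi^*b=b$, and the paper's statement that $Q$ spans the invariant line is all that is needed.
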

\begin{proof}
Every other flat $\SL(2,\C)$-connection $\nabla,$ whose underlying holomorphic structure is $\dbar,$ is given by 
$\nabla=\nabla^u+\Psi$ where
\[ \Psi\in H^0(M,K\End_0(V,\dbar))\]
is a Higgs field. An arbitrary section $\Psi\in\Gamma(M, K\End_0(V,\dbar))$ is given by
\[\Psi=\dvector{a & b\\ c & -a}\]
with respect to the decomposition $V=S\oplus S^*$  and the matrix entries are thus sections 
$a\in\Gamma(M,K),$ $b\in\Gamma(M,K^2)$ and $c\in\Gamma(M,\C).$
Then
\[\dbar\dvector{a & b\\ c & -a}=\dvector{\dbar^K a+ c \vol & \dbar^{K^2} b+2 a \vol\\ \dbar^\C c & -\dbar^K a-c \vol}.\]
This shows that $c=0$ if $\Psi$ is holomorphic.
Moreover, for a holomorphic $1$-form $\alpha\in H^0(M,K)$ the gauge 
\[g:=\dvector{1 & \alpha\\ 0 & 1}\]
is holomorphic with respect to $\dbar$ and satisfies
\[g^{-1}\nabla^u g-\nabla^u=\dvector{-\alpha & \del^K \alpha\\ 0 & \alpha}\in H^0(M, K\End_0(V,\dbar)).\]
Therefore, we can restrict our attention to the case of Higgs fields which have the following form
\[\Psi:=\dvector{0 & b\\ 0 & 0}\]
where $b$ is a holomorphic quadratic differential by holomorphicity of $\Psi.$ The Hopf differential $Q$ of the Lawson 
surface is (up to constant multiples) the only holomorphic quadratic differential on $M$ which is 
invariant under $\varphi_2,$ $\varphi_3$ and $\tau.$ From this it easily follows that if the gauge equivalence class of
$\nabla^u+\Psi$ is invariant under $\varphi_2,$ $\varphi_3$ and $\tau$ then $b$ must be a constant multiple of $Q.$
\end{proof}
\begin{Rem}\label{two_points}
The orbits under the group of gauge transformations of the above mentioned non semi-stable holomorphic structure and
$\dbar^0$ get arbitrarily close to each other: Consider the families of holomorphic structures
\[\dbar_t=\dvector{\dbar^S & t\, vol \\ Q^* &\dbar^{S^*} } \text{ and }\tilde\dbar_t=\dvector{\dbar^S &  vol \\ t\, Q^* &\dbar^{S^*} }\]
on $V=S\oplus S^*.$ Clearly, $\dbar_t$ and $\tilde\dbar_t$ are gauge equivalent for $t\neq 0.$ On the other hand
 $\dbar_0=\dbar^0$ and  $\tilde\dbar_0=\dbar$ which are clearly not isomorphic. We will see later how to distinguish
such families of isomorphism classes of holomorphic structures if they are equipped with corresponding families of 
gauge equivalence classes of flat connections.
\end{Rem}

\section{Hitchin's abelianization}\label{ch:hitchin_abelianization}
A very useful construction for the study of a moduli space of holomorphic (Higgs) bundles is given by Hitchin's integrable 
system 
\cite{H1,H2}. We do not describe this integrable system in detail but apply some of the methods in order to construct the 
moduli space $\mathcal S,$ which was studied in the previous chapter, explicitly. The main idea is the following:
A holomorphic structure of rank $2$ equipped with a Higgs field is already determined by the eigenlines of the Higgs 
field (which are in general only well-defined on a double covering of the Riemann surface). In fact,
the rank $2$ bundle is the push forward of the dual of an eigenline bundle. In our 
situation, appropriate Higgs fields of a Lawson symmetric holomorphic structure are basically unique up to a 
multiplicative constant by Lemma \ref{exceptional_structures} and its proof. In general the two eigenlines 
are given by points in a Prym variety which are dual to each other. This Prym variety turns out to be the Jacobian of a 
$1$-dimensional square torus in the case of Lawson symmetric holomorphic structures with symmetric Higgs fields,
see Lemma \ref{reduction_to_torus} and \ref{torus_in_prym}. 
Moreover, this Jacobian double covers the moduli space $\mathcal S$ in a natural way (Proposition \ref{PI}).

\begin{Lem}\label{exceptional_structures}
Let $\dbar$ be a Lawson symmetric, semi-stable holomorphic structure on a rank $2$ bundle over $M$ which is not 
isomorphic to  $\dbar^0.$ Then there exists a Higgs field $\Psi\in H^0(M,K\End_0(V,\dbar))$ with
\[\det\Psi=Q\in H^0(M,K^2)\]
 which satisfies $\varphi^*\Psi=g^{-1}\Psi g$ for every Lawson symmetry $\varphi,$ where $g$ is the isomorphism 
 between the holomorphic structures $\dbar$ and $\varphi^*\dbar.$ This Higgs fields is unique up to sign.
\end{Lem}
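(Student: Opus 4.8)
The plan is to fix a stable Lawson symmetric holomorphic structure $\dbar$ and to study the finite-dimensional space $W:=H^0(M,K\End_0(V,\dbar))$ of all Higgs fields, on which the symmetry group $G=\langle\varphi_2,\varphi_3,\tau\rangle$ acts. For each Lawson symmetry $\varphi$ I would choose the isomorphism $g_\varphi\colon(V,\dbar)\to(V,\varphi^*\dbar)$; since a stable bundle is simple, $g_\varphi$ is unique up to a nonzero scalar, so the conjugation $\Psi\mapsto g_\varphi^{-1}(\varphi^*\Psi)g_\varphi$ is a well-defined linear self-map of $W$ independent of that scaling. Because the scalar ambiguities cancel in every composition, these maps assemble into a genuine linear action of $G$ on $W$, and the equivariant Higgs fields in the lemma are precisely the invariants $W^G$. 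Thus the lemma reduces to two claims: $\dim_\C W^G=1$, and the generator of this line has nonzero determinant.

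First I would compute $\dim W^G$. As $\dbar$ is stable, $\End_0(V,\dbar)$ is simple, so $H^0(\End_0(V,\dbar))=0$ and Riemann--Roch gives $\dim W=3g-3=3$. By Serre duality $W$ is the cotangent space $T^*_{[\dbar]}\mathcal N$ of the moduli space $\mathcal N$ of stable bundles, on which $G$ acts by the contragredient of its action on $T_{[\dbar]}\mathcal N=H^1(\End_0(V,\dbar))$. Linearizing the finite group action at the fixed point $[\dbar]$, the invariant subspace $(T_{[\dbar]}\mathcal N)^G$ is the tangent space to the fixed-point locus, whose component through $[\dbar]$ is the projective line $\mathcal S$ of Proposition \ref{projective_line}. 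Hence $(T_{[\dbar]}\mathcal N)^G$, and therefore $W^G$, is one-dimensional. For $\Psi\in W^G$ the equivariance gives $\varphi^*(\det\Psi)=\det(\varphi^*\Psi)=\det\Psi$, so $\det\Psi$ is a Lawson symmetric holomorphic quadratic differential; as recalled in the proof of Proposition \ref{non_semi_stable_connection} these form the line $\C Q$ spanned by the Hopf differential. Choosing a generator $\Psi_0$ of $W^G$ we get $\det\Psi_0=c_0Q$, and if $c_0\neq0$ then exactly the two fields $\pm c_0^{-1/2}\Psi_0$ satisfy $\det\Psi=Q$. So everything comes down to ruling out $c_0=0$, i.e. to showing that $\Psi_0$ is not nilpotent.

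This non-degeneracy is the main obstacle. Suppose $\Psi_0\neq0$ were nilpotent. Its kernel is a holomorphic line subbundle $L\subset V$, which is Lawson symmetric by equivariance of $\Psi_0$, and the induced nonzero map $V/L=L^{-1}\to L\otimes K$ is a section of $L^2K$, forcing $\deg L\geq-1$. Stability forces $\deg L\leq-1$, so $\deg L=-1$ and $L^2K=\mathcal O$; thus $L^{-1}$ is a Lawson symmetric spin bundle, which by the uniqueness recalled in Section \ref{non_semi_stable} must be $S=L(Q_1+Q_3-Q_5)$. Hence $V$ would be a Lawson symmetric, necessarily nonsplit (as it is stable) extension $0\to S^*\to V\to S\to 0$. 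The decisive step is then to show that the unique such invariant extension is in fact strictly semistable — it carries a Lawson symmetric line subbundle of degree $0$ — which contradicts stability. I expect this to be the hardest point, and the natural tool is again the representation theory of $G$ on the one-dimensional space $H^1(M,S^{-2})\cong H^0(M,K^2)^*$ of invariant extension classes together with the classification of invariant sub-line-bundles used in Propositions \ref{projective_line} and \ref{semi-stable}.

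Once nilpotency is excluded for stable $\dbar$, we have $c_0\neq0$ and the existence and the sign ambiguity follow as above. Finally I would dispose of the strictly semistable case directly: by Proposition \ref{semi-stable} such $V$ is a direct sum $L(P_i-P_j)\oplus L(P_j-P_i)$, on which the equivariant Higgs field is forced into off-diagonal form with entries $b\in H^0(KL^2)$ and $c\in H^0(KL^{-2})$, each one-dimensional, so that $\det\Psi=-bc$. Checking that the symmetric sections $b$ and $c$ have complementary zeros among the branch points $P_1,\dots,P_4$ shows $bc$ is a nonzero multiple of $Q$, which yields existence and uniqueness up to sign in this case as well and completes the argument.
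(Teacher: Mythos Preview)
Your global strategy is sound and, in the stable case, runs parallel to the paper's proof: both arguments identify the equivariant Higgs fields with the cotangent line of the one-dimensional fixed locus $\mathcal S\subset\mathcal N$ at $[\dbar]$, and then have to exclude nilpotency of a generator. The paper produces its equivariant $\Psi$ concretely as the metric adjoint, via Narasimhan--Seshadri, of a harmonic representative of a tangent vector to $\mathcal S$; your representation-theoretic computation of $\dim W^G=1$ reaches the same point without invoking non-abelian Hodge theory, which is arguably cleaner.

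The gap is in your ``decisive step''. You correctly deduce that a nilpotent equivariant $\Psi_0$ forces $V$ to be a Lawson symmetric nontrivial extension $0\to S^*\to V\to S\to 0$, and you then propose to show that any such extension is strictly semistable. This is false: the bundle $\dbar^0$ itself is exactly such an extension (with respect to the unitary splitting $V=S^*\oplus S$ its $(0,1)$-part is upper triangular with off-diagonal entry a nonzero multiple of $Q^*$; see Section~\ref{The case of the stable holomorphic structure}), and $\dbar^0$ is stable. So no contradiction with stability can be extracted along the line you indicate, and the ``Lawson symmetric line subbundle of degree $0$'' you hope to find does not exist.

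The correct endgame is the one the paper takes: from the extension form conclude $V\cong\dbar^0$, which contradicts not stability but the hypothesis $\dbar\not\cong\dbar^0$. Your own remark that the $G$-invariants in $H^1(M,S^{-2})\cong H^0(M,K^2)^*$ are one-dimensional, dual to $\C Q$, is precisely what identifies the invariant extension class with that of $\dbar^0$; you just drew the wrong conclusion from it.
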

\begin{proof}
By Proposition \ref{semi-stable} every Lawson symmetric, semi-stable and non-stable holomorphic structure is the 
holomorphic direct sum of two line bundles. For these bundles, it is easy to construct a Higgs field $\Psi$ with
$\det\Psi=Q.$ Moreover, this Higgs field $\Psi$ can be constructed such that its pull-back $\varphi^*\Psi$ for a Lawson 
symmetry $\varphi$ is conjugated to $\Psi.$

All stable holomorphic structures give rise to smooth points
in the moduli space of holomorphic structures. Let $[\mu]\in H^1(M,\End_0(V))$ be a non-zero tangent vector of 
the isomorphism class of the stable holomorphic structure $\dbar$ in $\mathcal S.$ By the non-abelian Hodge theory
(see for example \cite{AB})
and the Theorem of Narasimhan-Seshadri, the class $[\mu]$ can be represented by a endomorphism-valued complex 
anti-linear $1$-form $\mu\in \Gamma(M,\bar K \End_0(V))$ which is parallel with respect to the (unique) unitary flat 
connection $\nabla$ with $\nabla''=\dbar.$  Let $\varphi$ be
one of the symmetries $\varphi_2,$ $\varphi_3$ or $\tau$ and $g$ be a gauge, i.e., a smooth isomorphism, of $V$ such
that $\varphi^*\dbar=g^{-1}\dbar g.$ As $\dbar$ is stable $g$ is unique up to multiplication with a constant multiple of the
identity. We claim that $\varphi^*\mu=g^{-1}\mu g.$ To see this note that $g^{-1}\mu g$ represents (with respect to
$g^{-1}\dbar g$) the same tangent vector in $T_{[\dbar]}\mathcal S$
as $\mu$ (with respect to $\dbar$) and as $\varphi^*\mu$ (with respect to $\varphi^*\dbar=g^{-1}\dbar g$). 
Therefore
$[g^{-1}\mu g-\varphi^*\mu]=0\in T_{[\dbar]}\mathcal S,$ and by non-abelian Hodge theory
$g^{-1}\mu g-\varphi^*\mu$ is in the image of $g^{-1}\nabla g.$ 
Moreover the unitary flat connections $g^{-1}\nabla g$ and $\varphi^*\nabla$ coincide by the uniqueness of Narasimhan 
and Seshadri Theorem. Hence the difference
$g^{-1}\mu g-\varphi^*\mu$ is parallel. This is only possible if $g^{-1}\mu g-\varphi^*\mu=0$ as claimed.

Consider the (non-zero)
 adjoint $\Psi=\mu^*\in H^0(M,K\End_0(V))$ which clearly satisfies $\varphi^*\Psi=g^{-1}\Psi g$ for
$\varphi$ and $g$ as above. Therefore the holomorphic quadratic differential $\det\Psi\in H^0(M;K^2)$ is invariant under
$\varphi_2,$ $\varphi_3$ and $\tau.$ If $\det\Psi\neq0$ this implies that it is a constant non-zero multiple of the Hopf 
differential of the Lawson surface.
If  $\det\Psi=0$ consider the holomorphic  line bundle $L=\ker\Psi\subset V.$ As $\Psi$ is trace-free 
 it defines $0\neq\tilde\Psi\in H^0(M,K\Hom(V/L,L))=H^0(M,KL^2).$ Because $\deg(L)\leq-1$ as $\dbar$ is stable,
 $L$ must be dual to a spin bundle, and because $\varphi^*\Psi=g^{-1}\Psi g$
 it is even the dual of the holomorphic spin bundle $S=L(Q_1+Q_3-Q_5).$ 
 This easily implies that $\dbar$ is 
 isomorphic to $\dbar^0$
in the case of $\det\Psi=0.$
 \end{proof}
\begin{Def}
The Higgs fields of Lemma \ref{exceptional_structures} are called symmetric Higgs fields. 
\end{Def}
\subsection{The eigenlines of symmetric Higgs fields}
The zeros of the Hopf differential $Q$ of the Lawson surface $M$ are simple. As a Higgs field is trace free by definition, 
the eigenlines of a symmetric Higgs field $\Psi$ (for a Lawson symmetric holomorphic structure $\dbar$) with 
$\det\Psi=Q$ are not well-defined on the Riemann surface $M.$ Following Hitchin \cite{H1}
 we define a (branched) double covering of $M$ on which the square root of $Q$ is well-defined:
\[\pi\colon\tilde M:=\{\omega_x\in  K_x | x\in M, \omega_x^2=Q_x\}\to M.\]
We denote the involution $\omega_x\mapsto-\omega_X$ by $\sigma\colon\tilde M\to\tilde M.$
There exists a tautological section \[\omega\in H^0(\tilde M, \pi^*K_M)\] satisfying 
\[\omega^2=\pi^*Q\,\text{ and } \,\sigma^*\omega=-\omega.\]
As the Hopf differential is invariant under $\varphi_2,\ \varphi_3$ and $\tau$ these symmetries of $M$ lift to symmetries 
of $\tilde M$ denoted by the same symbols. The tautological section is invariant under these symmetries
\[\varphi_2^*\omega=\omega,\, \varphi_3^*\omega=\omega,\, \tau^*\omega=\omega,\]
where we have naturally identified $\varphi_2^*\pi^*K_M=\pi^*\varphi_2^*K_M=\pi^*K_M$ and analogous for 
$\varphi_3$ and $\tau.$ On $\tilde M$ the eigenlines of $\pi^*\Psi$ are well-defined:
$$L_{\pm}:=\ker\pi^*\Psi\mp\omega\Id.$$
Clearly, $\sigma^*L_{\pm}=L_{\mp}.$ As the zeros of $Q=\det\Psi$ are simple, $\Psi$ has a one-dimensional kernel at 
these zeros. Therefore, the eigenline bundles $L_\pm$ intersect each other of order $1$ in $\pi^*V$ at the branch points 
of $\pi.$ Otherwise said, there is a holomorphic section
\[\wedge\in H^0(\tilde M,\Hom(L_+\otimes L_-,\Lambda^2\pi^*V))\]
which has zeros of order $1$ at the branch points of $\pi.$ Thus, $\wedge$ can be considered as a constant
multiple of $\omega\in H^0(\tilde M,\pi^*K_M)$ which has also 
simple zeros exactly at the branch points of $\pi$ by construction. Because $\Lambda^2V$ is the trivial holomorphic line 
bundle, the eigenline bundles satisfy
\begin{equation}\label{eigenline_eqn}
L_+\otimes L_-=L_+\otimes\sigma(L_+)=\pi^*K_M^*,
\end{equation}
which means that $L_\pm$ lie in an affine Prym variety for $\pi.$ Recall that the Prym variety of $\pi\colon \tilde M\to M$ 
is by definition
\[Prym(\pi)=\{L\in Jac(\tilde M)\mid \sigma^* L=L^*\}.\]
After fixing the line bundle $L=\pi^*S^*,$ which clearly satisfies \eqref{eigenline_eqn}, every other 
line bundle $L^+$ satisfying \eqref{eigenline_eqn} is given by $L^+=\pi^*S^*\otimes E$ for some holomorphic line bundle
$E\in Prym(\pi).$

\subsection{Reconstruction of holomorphic rank $2$ bundles}\label{The direct image bundle}
We shortly describe how to reconstruct the bundle $V$ from an eigenline bundle $L_{+} \to \tilde M$  of a symmetric Higgs 
field $\Psi\in H^0(M,K\End_0(V))$ with non-vanishing determinant $\det\Psi\neq 0.$ This construction will be used later 
to study Lawson symmetric holomorphic connections on $\tilde M.$
First consider an open subset $U\subset M$ which does not contain a branch value of $\pi.$
The preimage $\pi^{-1}(U)\subset\tilde M$ consists of two disjoint copies $U_1\cup U_2\subset\tilde M$ of $U.$ Because
 \[\pi^*V_{\mid U_i}=(L_+\oplus L_-)_{\mid U_i}\] 
and $\sigma(L_\pm)=L_\mp,$ we obtain a basis of holomorphic sections of $V$ over $U$  which is given by the 
non-vanishing sections \[s_1\in H^0(U_1, L_+)\,\text{ and }\, s_2\in H^0(U_1, L_-)\cong H^0(U_2, L_+).\] This local basis 
of holomorphic sections in $V$ is special linear if and only if
\[\wedge(s_1\otimes s_2)=1\in H^0(U_1,\pi^*K_M\otimes L_+\otimes L_-)=H^0(U_1,\C)\]
in $U_1.$

Next we consider the case of a branch point $p$ of $\pi$:
Let $z\colon U\subset\tilde M\to\C$ be a local coordinate centered at  $p$ such that $\sigma(z)=-z$ and
$\sigma(U)=U.$ 
A local coordinate on $\pi(U)$ around $\pi(p)\in M$ is given by $y$ with $y=z^2.$
We may choose $z$ in such a way that
\[\wedge=z dy+ higher\ order\ terms\in H^0(U,\pi^*K_M),\]
where $dy\in H^0(U,\pi^*K_M)$ is the pull-back as a section and not as a $1$-form.
Let $t_1\in H^0(U,L_+), t_2=\sigma(t_1)\in H^0(U,L_-)$ be holomorphic sections without zeros such that
\[\wedge(t_1\otimes t_2)=z\in H^0(U,\C).\]
Then there are local holomorphic basis fields $s_1,s_2$ of $V\to M$ with $s_1\wedge s_2=1$ such that
$\pi^*s_1(p)=t_1(p)=t_2(p)$ and 
\begin{equation}\label{branch_frame}
t_1=\pi^*s_1-\frac{z}{2}\pi^*s_2,\, \, t_2=\pi^*s_1+\frac{z}{2}\pi^*s_2\end{equation}
in $\pi^* V,$ or equivalently
\[\pi^*s_1=\frac{1}{2}t_1+\frac{1}{2}t_2,\, \, \pi^*s_2=\frac{1}{z}t_2-\frac{1}{z}t_1.\] 
As the last equation is invariant under $\sigma$ this gives us a well-defined special linear holomorphic frame 
$\pi^*s_1,\pi^*s_2$ of $V$ over $\pi(U)\subset M.$ 

By going through the above construction carefully without a priori knowing the existence of a rank $2$ bundle  
one can construct a holomorphic 
rank $2$ bundle $V\to M$ for any line bundle $L$ in the affine Prym variety. Then one can show that this rank $2$ 
bundle has trivial determinant
and that there exists a Higgs field on $V$ whose determinant is $Q.$ See for example \cite{H1} for details on this.

\begin{Rem}
The above reconstruction is the differential geometric formulation of the sheaf theoretic push-forward
 construction $\pi_*L_\pm^*.$  \end{Rem}
\begin{Rem}\label{2:1}
Because of Lemma \ref{exceptional_structures} a generic Lawson symmetric stable bundle $V\to M$ 
 corresponds via the above construction to exactly two different line bundles $L_+$ and $L_-=\sigma(L_+).$ 
 \end{Rem}

\subsection{The torus parametrizing holomorphic structures}
The Prym variety of the double covering
 $\pi\colon\tilde M\to M$ is complex $3$-dimensional and the moduli space $\mathcal S$ of Lawson 
symmetric holomorphic structures is only $1$-dimensional. We now
determine which line bundles $L_+$ in the affine Prym variety
correspond to Lawson symmetric holomorphic structures.

Let $\dbar$ be a Lawson symmetric holomorphic structure which admits a symmetric Higgs 
field $\Psi$ whose determinant
is the Hopf differential $Q$ of the Lawson surface. 
By the definition of symmetric Higgs fields the eigenlines $L_\pm$ of $\Psi$ are isomorphic to 
$\varphi_2^*L_\pm,$ $\varphi_3^*L_\pm$ and $\tau^*L_\pm.$
Recall that the same
 is true for our base point $\pi^*S^*$ in the affine Prym variety. Therefore, it remains to determine the connected 
 component of those holomorphic line bundles $\tilde E$ of degree $0$ on $\tilde M$ whose isomorphism class 
is invariant under $\varphi_2,$ $\varphi_3$ and 
$\tau.$ The quotient \[\tilde\pi\colon\tilde M\to\tilde M/\Z_3\] of the $\Z_3$-action induced by $\varphi_3$ is a square 
torus. Moreover, $\varphi_2$ and $\tau$
induce fix point free holomorphic involutions on $\tilde M/\Z_3$ (denoted by the same symbols). They are given by 
translations. Therefore, the pull-back $\tilde E=\tilde\pi^*E$ of every line bundle $E\in Jac(\tilde M/\Z_3)$ is invariant 
under $\varphi_2,$ $\varphi_3$ and $\tau.$

In general one has to distinguish between those bundles which are pull-backs of bundles on the quotient of some 
automorphism on a 
Riemann surface and bundles whose isomorphism class is invariant under the automorphism. In our situation 
they turn out to be the same:
\begin{Lem}\label{reduction_to_torus}
Let $\tilde E$ be a holomorphic line bundle of degree $0$ on $\tilde M.$ If its isomorphism class is invariant under 
$\varphi_2,$ $\varphi_3$ and $\tau$ then $\tilde E$ is isomorphic to the pull-back $\tilde \pi^*E$ for some 
$E\in Jac( \tilde M/\Z_3).$
\end{Lem}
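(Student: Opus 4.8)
The plan is to pin down, for a $\varphi_3$-invariant line bundle $\tilde E$ of degree $0$, the precise obstruction to its being a pullback along $\tilde\pi$, and then to show that the additional $\varphi_2$- and $\tau$-invariance kills this obstruction. First I would record the geometry of $\tilde\pi\colon\tilde M\to \tilde M/\Z_3=:T$: the automorphism $\varphi_3$ has exactly four fixed points $p_1,\dots,p_4$ on $\tilde M$ (they lie over the four zeros of $Q$, which are the $\varphi_3$-fixed points of $M$), and $\tilde\pi$ is totally ramified there, so $\tilde\pi^*\tilde\pi(p_i)=3p_i$. By the descent theory for the $\Z_3$-action (which is free away from the $p_i$), an invariant bundle is a pullback precisely when it carries a $\Z_3$-linearization whose isotropy representations at all four $p_i$ are trivial.

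Next I would choose a linearization of $\tilde E$ (one exists since $H^2(\Z_3,\C^*)=0$) and read off its isotropy weights $a_1,\dots,a_4\in\Z/3$ at the fixed points. Twisting by a power of $\mathcal O(p_i)$ changes the weight at $p_i$ as well as the degree, so one may bring $\tilde E$ into the normal form
\[ \tilde E\;\cong\;\tilde\pi^*E_0\otimes\mathcal O\Big(\sum_i a_i\,p_i\Big),\qquad a_i\in\{0,1,2\},\ E_0\in Jac(T), \]
and $\tilde E$ is a pullback iff the four weights can be simultaneously normalized to $0$, i.e. iff $a_1=a_2=a_3=a_4$ in $\Z/3$ together with the constraint $\sum_i a_i\equiv 0\pmod 3$ forced by $\deg\tilde\pi^*E_0=3\deg E_0$ and $\deg\tilde E=0$.

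Then comes the symmetry input. Since $\varphi_2$ commutes with $\varphi_3$ and $\tau$ conjugates $\varphi_3$ to $\varphi_3^{-1}$, both preserve $\{p_1,\dots,p_4\}$; evaluating on the coordinate $z$ of \eqref{curve} (the four fixed points are $z\in\{1,-1,i,-i\}$, with $\varphi_2\colon z\mapsto -z$ and $\tau\colon z\mapsto i/z$) shows that, after labeling $p_1,\dots,p_4$ by $z=1,-1,i,-i$, the map $\varphi_2$ induces $(p_1p_2)(p_3p_4)$ and $\tau$ induces $(p_1p_3)(p_2p_4)$. Comparing the divisor $\sum a_i p_i$ with its pullbacks under these involutions, the isomorphism $\varphi_2^*\tilde E\cong\tilde E$ forces $a_1=a_2$ and $a_3=a_4$, while $\tau^*\tilde E\cong\tilde E$ forces $a_1=a_3$ and $a_2=a_4$ in $\Z/3$ (the difference of the two divisors must be a genuine pullback, hence of weight $0$ at every $p_i$). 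These two transverse pairings identify all four weights to a common value $a$, and the degree-zero hypothesis gives $4a\equiv\sum_i a_i\equiv 0\pmod 3$, whence $a=0$. Thus every weight vanishes and $\tilde E\cong\tilde\pi^*E_0$.

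I expect the main obstacle to be the careful bookkeeping in the middle step: making precise the dictionary between a $\Z_3$-linearization, its isotropy weights at the totally ramified points, and the normal form $\tilde\pi^*E_0\otimes\mathcal O(\sum a_i p_i)$, and in particular verifying that ``all weights equal'' \emph{together with} ``$\deg\equiv 0\bmod 3$'' is exactly equivalent to descent. The subtlety is that the four points are ramification, not splitting, points, so each $\mathcal O(p_i)$ contributes a fractional weight and it is the degree-zero condition that forces the common weight $a$ to be $0$ rather than merely constant. Once this normal form and its descent criterion are nailed down, the symmetry computation that closes the argument is short.
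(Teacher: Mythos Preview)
Your approach via $\Z_3$-linearizations and isotropy weights is valid and genuinely different from the paper's proof, which instead passes to the unique flat unitary connection on $\tilde E$, observes that its gauge class is invariant under the three symmetries, and then argues directly on the level of the abelian monodromy representation $\pi_1(\tilde M)\to U(1)$ that such an invariant representation factors through $\pi_1(\tilde M/\Z_3)$. The paper's route avoids all the weight bookkeeping at the ramification points; yours is more algebraic and in principle more quantitative.

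However, your $\tau$-step has a real gap. You correctly note that $\tau$ conjugates $\varphi_3$ to $\varphi_3^{-1}$, but you do not carry this through: pulling back a $\varphi_3$-linearization along $\tau$ and then converting it back to a $\varphi_3$-linearization \emph{negates} all weights in addition to permuting them. Thus $\tau^*\tilde E\cong\tilde E$ only yields
\[
(a_1,a_2,a_3,a_4)\equiv(-a_3,-a_4,-a_1,-a_2)\pmod{\text{diag}},
\]
i.e.\ $a_1+a_3=a_2+a_4$, which is automatic once $\varphi_2$ has given $a_1=a_2$ and $a_3=a_4$. Your parenthetical justification (``the difference of the two divisors must be a genuine pullback, hence of weight $0$ at every $p_i$'') is also not right: a degree-zero divisor supported on the $p_i$ can define a pullback bundle without having all coefficients divisible by $3$ (e.g.\ $\mathcal O(p_1+p_2-p_3-p_4)$), because the descent criterion is that the isotropy weights be all \emph{equal}, not all zero, and the weight of $\mathcal O(p_i)$ at $p_i$ is the tangent weight $b_i$ of $d\varphi_3|_{T_{p_i}\tilde M}$, which is \emph{not} the same at all four points.

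The repair uses exactly the piece of bookkeeping you flagged as the obstacle. Computing on $M$ one finds that $d\varphi_3$ has tangent weight $1$ at the two points with $y=0$ and weight $2$ at the two with $y=\infty$; passing to the branched double cover $\tilde M$ squares the local coordinate, so the tangent weights on $\tilde M$ are $(b_1,b_2,b_3,b_4)=(2,2,1,1)$. The degree-zero constraint then reads $\sum a_i b_i^{-1}\equiv 0$, i.e.\ $2a_1+2a_2+a_3+a_4\equiv 0\pmod 3$. Combined with $a_1=a_2$, $a_3=a_4$ from $\varphi_2$, this gives $a_1+2a_3\equiv 0$, hence $a_1=a_3$, and all weights coincide. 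So your strategy does close, but via the tangent weights and the degree relation rather than via $\tau$.
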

 \begin{proof}
 We only sketch the proof of the lemma: Consider the corresponding flat unitary connection $\nabla$ on $\tilde E.$ 
 As the isomorphism class of $\tilde E$ is invariant under $\varphi_2,$ $\varphi_3$ and $\tau$ the gauge equivalence 
 class of $\nabla$ is also invariant under $\varphi_2,$ $\varphi_3$ and $\tau.$ This gauge equivalence class is 
 determined by its (abelian) monodromy representation \[\pi_1(\tilde M)\to U(1)=S^1\subset\C.\]
 Using the symmetries $\varphi_2,$ $\varphi_3$ and $\tau$ one can easily deduce that the connection is (gauge 
 equivalent) to the pull-back of a flat connection on the torus $\tilde M/\Z_3.$
 \end{proof}
\begin{Lem}\label{torus_in_prym}
The connected component of the space of $\Z_3-$invariant line bundles in the Prym variety of $\pi\colon\tilde M\to M$ 
containing the trivial holomorphic line bundle is given by the (pull-back of the) Jacobian of the torus $\tilde M/\Z_3.$ 
\end{Lem}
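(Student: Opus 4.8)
The plan is to realise the claimed component as the image of the pullback homomorphism $\tilde\pi^*$ and to pin down its dimension by an eigenspace computation on $Prym(\pi)$. First I would record how the covering involution $\sigma$ of $\pi$ interacts with the $\Z_3$-action. Since $\varphi_3^*\omega=\omega$ while $\sigma^*\omega=-\omega$, a lift of $\varphi_3\colon M\to M$ to $\tilde M$ is determined by the sign of its action on the tautological section $\omega$ (the two lifts differ by the deck transformation $\sigma$); as both $\sigma\varphi_3$ and $\varphi_3\sigma$ lie over $\varphi_3$ and act by $-\omega$, this forces $\sigma\varphi_3=\varphi_3\sigma$. Hence $\sigma$ descends to a holomorphic involution $\bar\sigma$ on the elliptic curve $\tilde M/\Z_3$ with $\tilde\pi\circ\sigma=\bar\sigma\circ\tilde\pi$. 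The fixed points of $\sigma$ are the branch points of $\pi$ (the zeros of $\omega$), each of which maps to a fixed point of $\bar\sigma$; and $\bar\sigma\neq\mathrm{id}$ since $\sigma\notin\langle\varphi_3\rangle$. A non-trivial holomorphic involution of an elliptic curve with a fixed point has the form $z\mapsto -z+c$, so $\bar\sigma^*=-1$ on $Jac(\tilde M/\Z_3)$.

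With this the forward inclusion is immediate: for $E\in Jac(\tilde M/\Z_3)$,
\[\sigma^*\tilde\pi^*E=\tilde\pi^*\bar\sigma^*E=\tilde\pi^*E^{-1}=(\tilde\pi^*E)^*,\]
so $\tilde\pi^*E\in Prym(\pi)$, and $\tilde\pi^*E$ is visibly $\varphi_3$-invariant. Thus $\tilde\pi^*Jac(\tilde M/\Z_3)$ is a connected abelian subvariety of the $\varphi_3$-fixed locus of $Prym(\pi)$ containing the trivial bundle. Since $\tilde\pi$ is a non-constant (degree $3$) map, $\tilde\pi^*$ has finite kernel and its image is therefore one-dimensional.

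It remains to show that the identity component of $\{\tilde E\in Prym(\pi)\mid \varphi_3^*\tilde E\cong\tilde E\}$ is no larger. This identity component is an abelian subvariety whose dimension equals that of the $\varphi_3$-invariant part of $H^0(\tilde M,K)^-$, the $\sigma$-anti-invariant holomorphic $1$-forms forming the cotangent space of $Prym(\pi)$ at $0$ (of total dimension $g(\tilde M)-g(M)=3$). So I would compute this invariant part. Invariant holomorphic $1$-forms on a cyclic cover descend to holomorphic $1$-forms on the quotient, giving $H^0(\tilde M,K)^{\varphi_3}=\tilde\pi^*H^0(\tilde M/\Z_3,K)$, which is one-dimensional, spanned by $\tilde\pi^*\eta$ for a generator $\eta$ of $H^0(\tilde M/\Z_3,K)$. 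Because $\bar\sigma(z)=-z+c$ we have $\bar\sigma^*\eta=-\eta$, so $\tilde\pi^*\eta$ is $\sigma$-anti-invariant and hence lies in $H^0(\tilde M,K)^-$. Therefore the $\varphi_3$-invariant part of $H^0(\tilde M,K)^-$ is exactly $\C\,\tilde\pi^*\eta$, of dimension $1$, and the identity component of the $\varphi_3$-fixed locus of $Prym(\pi)$ is one-dimensional. Combined with the previous paragraph this forces it to equal $\tilde\pi^*Jac(\tilde M/\Z_3)$.

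The main obstacle is this last eigenspace count: one must be certain that the unique (up to scale) $\varphi_3$-invariant holomorphic $1$-form on $\tilde M$ is genuinely $\sigma$-anti-invariant, so that it contributes to the tangent space of $Prym(\pi)$ rather than to its $\sigma$-invariant complement — and it is precisely the relation $\bar\sigma^*=-1$ from the first step that secures this. (Alternatively, the reverse inclusion can be deduced from Lemma \ref{reduction_to_torus} once one also invokes $\varphi_2$- and $\tau$-invariance, but the eigenspace argument above is self-contained and uses only $\varphi_3$ and $\sigma$.)
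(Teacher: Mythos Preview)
Your argument is correct. The forward inclusion is essentially the paper's own: the paper picks $E=L(x-p)$ and computes $E\otimes\sigma^*E=L(x+\sigma(x)-2p)=\underline\C$ using that the quotient of $\tilde M/\Z_3$ by $\sigma$ is $\P^1$, which is exactly your statement $\bar\sigma^*=-1$ on $Jac(\tilde M/\Z_3)$ phrased in divisors.

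Where you differ is in the reverse inclusion. The paper's proof only establishes that $\tilde\pi^*Jac(\tilde M/\Z_3)$ lands in $Prym(\pi)$ and says nothing further; the equality with the identity component of the $\Z_3$-fixed locus is left implicit (presumably via the standard fact that the identity component of the $\varphi_3$-invariants in $Jac(\tilde M)$ is already $\tilde\pi^*Jac(\tilde M/\Z_3)$, being of dimension $g(\tilde M/\Z_3)=1$). Your tangent-space computation makes this step explicit and self-contained, and in particular verifies directly that the unique $\varphi_3$-invariant holomorphic $1$-form is $\sigma$-anti-invariant, so the dimension bound really takes place inside $Prym(\pi)$ rather than merely in $Jac(\tilde M)$. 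Your proof is therefore more complete than the paper's for the lemma as stated, at the cost of a longer setup; the paper's divisor argument is shorter but relies on the reader supplying the reverse direction.
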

\begin{proof}
Any line bundle on the torus is given by $E=L(x-p),$ where $x$ is a suitable point on the torus and $p$ is the image of the branch 
point $P_1\in\tilde M.$ The involution $\sigma$ descends to an involution on $\tilde M/\Z_3$ with four fix points
which are exactly the images of the branch points of $\tilde\pi.$ Therefore, the
quotient of $\tilde M/\Z_3$ by $\sigma$ is the projective line $\P^1$ and
\[E\otimes\sigma^* E=L(x-p+\sigma(x)-p)=\underline\C\] which implies
$\tilde\pi^*E\otimes\sigma^*(\tilde\pi^*E)=\tilde\pi^*(E\otimes\sigma^*E)=\underline\C.$
\end{proof}

These two lemmas enable us to define a double covering 
$\Pi\colon Jac(\tilde M/\Z_3)\to\mathcal S=\P^1:$
 Take a line bundle
$L\in Jac(\tilde M/\Z_3)$ and consider \[L_+:=\pi^*S^*\otimes \tilde\pi^*L\to \tilde M.\] The isomorphism class of this line 
bundle is invariant under $\varphi_2,$ $\varphi_3$ and $\tau$ and it satisfies 
\[L_+\otimes\sigma(L_+)=\pi^*K_M\]
by Lemma \ref{torus_in_prym}. As we have seen in Section \ref{The direct image bundle}, $L_+$ is an eigenline bundle 
of a symmetric Higgs field of the pullback $\pi^*V\to\tilde M$ of a 
holomorphic rank two bundle $V\to M$ with trivial determinant. 
\begin{Pro}\label{PI}
There exists an even holomorphic map 
\begin{equation}
\Pi\colon Jac(\tilde M/\Z_3)\to\mathcal S=\P^1
\end{equation}
of degree $2$
to the moduli space $\mathcal S$ of Lawson symmetric holomorphic bundles. This map is determined by
$\Pi(L)=[\dbar]$ for $L\neq\underline\C\in Jac(\tilde M/\Z_3)$ such that $\pi^*S^*\otimes \tilde\pi^*L$ is isomorphic 
to an eigenline bundle of a symmetric Higgs field of the Lawson symmetric holomorphic rank two bundle $(V,\dbar),$   and by $\Pi(\underline\C)=[\dbar^0]\in \mathcal S$ (see Lemma 
\ref{exceptional_structures}). 
The branch points are the spin bundles of 
 $\tilde M/\Z_3$ and the branch images of the non-trivial spin bundles are exactly the isomorphism classes of the
 semi-stable non-stable holomorphic bundles.
\end{Pro}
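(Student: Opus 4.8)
The plan is to establish the map $\Pi$ by assembling the pieces proven in Lemmas \ref{reduction_to_torus} and \ref{torus_in_prym}, and then to verify the three assertions: that $\Pi$ is well-defined and holomorphic, that it has degree $2$ and is even, and that the branch behaviour is as claimed. First I would check well-definedness. Given $L\in Jac(\tilde M/\Z_3)$ with $L\neq\underline\C$, the line bundle $L_+=\pi^*S^*\otimes\tilde\pi^*L$ is invariant under $\varphi_2,\varphi_3,\tau$ and satisfies the Prym relation $L_+\otimes\sigma(L_+)=\pi^*K_M^*$ by Lemma \ref{torus_in_prym}; by the reconstruction of Section \ref{The direct image bundle} it is an eigenline bundle of a symmetric Higgs field on the pullback of a well-defined rank $2$ bundle $(V,\dbar)$ with trivial determinant, and this $(V,\dbar)$ is Lawson symmetric by the invariance of $L_+$ together with the uniqueness properties in Lemma \ref{exceptional_structures}. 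Hence $[\dbar]\in\mathcal S$ is well-defined, and the exceptional value $\Pi(\underline\C)=[\dbar^0]$ is dictated by the degenerate case $\det\Psi=0$ analysed in Lemma \ref{exceptional_structures}. Holomorphicity follows because the reconstruction depends holomorphically on $L_+$, hence on $L$.

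Next I would prove that $\Pi$ is even and of degree $2$. Evenness means $\Pi(L)=\Pi(L^*)$: since $\sigma(L_+)=L_-$ is the other eigenline of the \emph{same} symmetric Higgs field, and $\sigma^*(\tilde\pi^*L)=\tilde\pi^*L^*$ because $\sigma$ acts on $\tilde M/\Z_3$ with quotient $\P^1$ (as in the proof of Lemma \ref{torus_in_prym}), the bundles $L$ and $L^*$ produce the two eigenlines $L_+$ and $L_-$ of one and the same $(V,\dbar)$, so they have the same image. This is precisely Remark \ref{2:1}: a generic Lawson symmetric stable bundle corresponds to exactly two line bundles $L_+$ and $L_-=\sigma(L_+)$, which gives degree $2$ for the generic fibre and forces $\Pi$ to be a degree $2$ holomorphic map onto $\mathcal S=\P^1$.

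The identification of the branch points is the step I expect to be the main obstacle, since it requires matching the fixed points of the deck involution $L\mapsto L^*$ of $\Pi$ with geometrically meaningful bundles. The involution interchanging the two sheets of $\Pi$ is $L\mapsto L^*$ (dualization on $Jac(\tilde M/\Z_3)$), whose fixed points are exactly the $2$-torsion points, i.e.\ the spin bundles (theta characteristics) of the torus $\tilde M/\Z_3$. For such a fixed $L\cong L^*$ the two eigenlines coincide, $L_+\cong L_-$, so the Higgs field degenerates in its eigenline decomposition and the reconstructed bundle $(V,\dbar)$ becomes a direct sum rather than a generic stable bundle. I would then argue that the three non-trivial spin bundles map to the three non-trivial $2$-torsion images in $\mathcal S$, which by Propositions \ref{projective_line} and \ref{semi-stable} are precisely the strictly semi-stable direct sum bundles $L(P_1-P_2)\oplus L(P_2-P_1)$ and $L(P_1-P_4)\oplus L(P_4-P_1)$ occurring as the intersection of $\mathcal S$ with the Kummer surface, while the trivial spin bundle $\underline\C$ maps to the double point $[\dbar^0]$. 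The delicate point here is to confirm that each branch point of $\Pi$ is genuinely a simple ramification point and that the count of spin bundles matches the count of semi-stable non-stable bundles found earlier; I would verify this by a Riemann--Hurwitz consistency check, using that a degree $2$ map $\P^1\to\P^1$ has exactly two branch points while a genus $1$ curve to $\P^1$ has four, and reconciling this with the identification $\mathcal S=\P^1$ together with the double point $[\dbar^0]$ noted in Remark \ref{two_points}.
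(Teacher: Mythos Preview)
Your outline follows the paper's approach in broad strokes, but there are two genuine gaps.

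First, you never verify that the rank $2$ bundle $(V,\dbar)$ reconstructed from $L_+=\pi^*S^*\otimes\tilde\pi^*L$ is actually semi-stable. Since $\mathcal S$ is by definition the space of \emph{semi-stable} Lawson symmetric bundles, this is needed for $\Pi$ to land in $\mathcal S$ at all. The paper handles this explicitly: assuming a destabilising subbundle $E\subset V$ of positive degree, it rules out the non-spin case (no Higgs field with simple-zero determinant on $E\oplus E^*$) and then shows in the spin case $E=S$ that the inclusion $\pi^*S^*\otimes\tilde\pi^*L\hookrightarrow\pi^*S\oplus\pi^*S^*$ forces $L=\underline\C$. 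This same computation is what justifies the assignment $\Pi(\underline\C)=[\dbar^0]$: the push-forward at $L=\underline\C$ gives the \emph{non-stable} direct sum $S\oplus S^*$, not $\dbar^0$ itself, and one sets $\Pi(\underline\C)=[\dbar^0]$ because the gauge orbit of $S\oplus S^*$ is infinitesimally close to that of $\dbar^0$ (Remark \ref{two_points}), making this the unique holomorphic extension.

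Second, your argument for the branch images is not valid. You claim that when $L\cong L^*$ the eigenlines ``coincide, $L_+\cong L_-$, so the Higgs field degenerates \dots\ and the reconstructed bundle becomes a direct sum''. But $L_+\cong L_-$ as abstract line bundles does \emph{not} imply the Higgs field degenerates or that $V$ splits: the eigenlines are distinct subbundles of $\pi^*V$ intersecting transversally at the branch points regardless of whether they are abstractly isomorphic. The paper instead argues in the opposite direction: it takes each strictly semi-stable bundle (e.g.\ $V=\underline\C^2$), writes down its symmetric Higgs field explicitly, computes the eigenlines to be $L(-P_1-P_3)=\pi^*S^*\otimes\tilde\pi^*L(\tilde\pi(P_3)-\tilde\pi(P_1))$, and checks that $L(\tilde\pi(P_3)-\tilde\pi(P_1))$ is a non-trivial spin bundle of $\tilde M/\Z_3$. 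This direct computation, repeated for the other two semi-stable bundles, is what matches the three non-trivial spin bundles with the three strictly semi-stable points of $\mathcal S$.
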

\begin{proof}
First we show that for $L\neq\underline\C,$ the corresponding
rank two bundle is semi-stable: Assume that $E$ is a holomorphic line subbundle of 
a Lawson symmetric holomorphic bundle $V\to M$ of degree greater than $0.$ If
$E$ is not a spin bundle of the genus $2$ surface $M$ the rank two bundle $V$ would be isomorphic to the holomorphic 
direct sum $E\oplus E^*.$ In this case one easily sees that there do not exists a Higgs field whose determinant has 
simple zeros. If $E$ is a spin bundle it must be isomorphic to the spin bundle $S$ of the Lawson immersion because of
the symmetries. Let the rank two holomorphic structure be given by
\[\dbar=\dvector{\dbar^S & \alpha \\ 0 & \dbar^{S^*}}\] on the topological direct sum $V=S\oplus S^*$ for some
$\alpha\in\Gamma(M,\bar KK).$
The eigenline bundle $\pi^*S^*\otimes \tilde\pi^*L\subset V$ would be given by a map
 \[\dvector{a\\b}\colon\pi^*S^*\otimes \tilde\pi^*L\to\pi^*S\oplus\pi^*S^*\]
satisfying $\dbar a+\alpha b=0$ and $\dbar b=0.$
For $L\neq\underline\C\in Jac(\tilde M/\Z_3)$ there does not exists a holomorphic map from
$\pi^*S^*\otimes \tilde\pi^*L$ to $\pi^*S^*.$ Therefore the eigenline bundle $\pi^*S^*\otimes \tilde\pi^*L$ would be
$\pi^*S,$ which is impossible because of the degree. Moreover one easily sees that the 
corresponding holomorphic rank two bundle for $L=\underline\C$ must be isomorphic to the holomorphic direct sum 
$V=\pi^*S\oplus\pi^*S^*.$
The orbit of this holomorphic structure under the gauge group is infinitesimal near to the one of the holomorphic 
structure $\dbar^0$ of Lemma 
\ref{exceptional_structures}. Therefore
we can map $\underline\C\in Jac(\tilde M/\Z_3)$ to the equivalence class of the stable holomorphic structure 
 $\dbar^0$ in $\mathcal S=\P^1$ in order to obtain a well-defined holomorphic map 
 $\Pi\colon Jac(\tilde M/\Z_3)\to\mathcal S.$
 
Because of Lemma \ref{exceptional_structures} and remark \ref{2:1} the degree of the map $\Pi$ is $2.$
 Clearly $\Pi(L)=\Pi(L^*)$ for all $L\in Jac(\tilde M/\Z_3).$ Therefore the spin bundles of $\tilde M/\Z_3$ are the only
  branch points of $\Pi.$
It remains to show that the non-trivial spin bundles in $Jac(\tilde M/\Z_3)$ correspond to the strictly semi-stable 
bundles $V\to M.$ This can either be seen by analogous methods as in \cite{H1} used for the computation of the
unstable locus in the Prym variety, or more directly as follows: Consider for example the non stable semi-stable bundle
$V=\underline \C\oplus\underline \C.$ Then, a symmetric Higgs field is given by
\[\Psi=\dvector{0 & \omega_1\\ \omega_2&0},\]
where $\omega_1$ and $\omega_2$ are holomorphic differentials with simple zeros at $P_1$ and $P_3$ respectively
$P_2$ and $P_4$ such that $Q=\omega_1\omega_2.$
Then the eigenlines $\ker(\Psi\pm\alpha)$ are both isomorphic to $L(-P_1-P_3)=\pi^*S^*\otimes L(3P_1-3P_3).$
Clearly, $L(3P_1-3P_3)=\tilde\pi^*L(\tilde\pi(P_3)-\tilde\pi(P_1)),$ and $L(\tilde\pi(P_3)-\tilde\pi(P_1))$ is a non-trivial
spin bundle of $\tilde M/\Z_3.$
Therefore, the gauge orbit of the trivial holomorphic rank $2$ bundle $\underline\C^2\to M$ is a branch image of $\Pi,$
and similarly one can show that the same is true for the remaining two semi-stable non-stable holomorphic structures.
\end{proof}

\begin{Rem}\label{comparison_with_tori}
This double covering of the moduli space $\mathcal S$ of Lawson symmetric holomorphic rank two bundles is very 
similar to the one of the moduli space of holomorphic rank two bundles with trivial determinant on a Riemann 
surface $\Sigma$ of genus $1.$ The later space consist of all bundles of the form $L\oplus L^*$ where
 $L\in Jac(\Sigma)$ 
together with the non-trivial extensions of the spin bundles of $\Sigma$ with itself, see \cite{A}.
\end{Rem}

\section{Flat Lawson symmetric $\SL(2,\C)$-connections}\label{sec:flat_connections}
We use the results of the previous chapter to study the moduli space of flat Lawson symmetric connections on 
$M$ as an affine bundle over the moduli space of Lawson symmetric holomorphic structures.
A similar approach was used by Donagi and Pantev \cite{DP} in their study of the geometric Langlands correspondence.
 
 The underlying holomorphic structure 
 $\nabla''$ of a flat Lawson symmetric connection $\nabla$ is determined by a holomorphic line bundle
   $L\in Jac(\tilde M/\Z_3)$ (Proposition \ref{PI}).
   Conversely, for all non-trivial holomorphic line bundles $L\in Jac(\tilde M/\Z_3)$
   there exists a Lawson symmetric holomorphic structure which is semi-stable.
Because of the Theorem of Narasimhan and Seshadri \cite{NS}, these holomorphic structures admit flat unitary 
connections,
 and, because of the uniqueness part in \cite{NS}, the gauge equivalence class of the
 flat unitary connection is also invariant under $\varphi_2,$ $\varphi_3$ and $\tau.$ 
 In order to obtain all flat Lawson symmetric 
connections we only need to add symmetric Higgs fields to the unitary connections. 
We will see in Theorem \ref{abelianization} that flat Lawson symmetric connections on $M$ are uniquely and explicitly 
determined by a flat connection on the corresponding line bundle $L\in Jac(\tilde M/\Z_3)$ as long as
$L$ is not isomorphic the trivial holomorphic bundle $\underline\C.$
Adding a symmetric Higgs field on the Lawson symmetric connection on $M$ 
is equivalent to adding a holomorphic $1$-form to the line bundle connection on $L\to\tilde M/\Z_3.$
Therefore the affine bundle  structure of the space of gauge equivalence classes of flat Lawson symmetric connections 
on $M$ is determined by the affine bundle structure of the moduli space of flat line bundle connections over the 
Jacobian of the torus $\tilde M/\Z_3.$
 The case of the remaining flat Lawson symmetric connections (corresponding
 to the holomorphic structures which are either isomorphic to $\dbar^0$ or to the non-trivial extension
 $0\to S\to V\to S^*\to 0$) is dealt with in the next chapter. We will see that they 
   occur as special limits as $L$ converges to the trivial holomorphic line bundle. 

Let $\nabla$ be a flat Lawson symmetric connection such that its underlying 
holomorphic structure $\nabla''$ admits a symmetric Higgs field $\Psi\in H^0(K,\End_0(V))$ with $\det \Psi=Q.$
Equivalently, there is a non-trivial holomorphic line bundle $L\in Jac(\tilde M/\Z_3)$ with $\Pi(L)=[\nabla''].$
 Consider the pull-back connection $\pi^*\nabla$ 
on $\pi^*V\to\tilde M,$ where $\pi\colon \tilde M\to M$ is as in the previous chapter.
As the eigenline bundles $L_\pm\to\tilde M$ of $\pi^*\Psi$ are holomorphic subbundles of $\pi^*V,$ which only intersect
at the branch points of $\pi,$ there exists a holomorphic homomorphism 
\[f\colon L_+\oplus L_-\to\pi^*V\] which is an isomorphism away from the branch points of $\pi.$ Therefore
there exists a unique meromorphic flat connection $\tilde\nabla$ on $L_+\oplus L_-\to\tilde M$
such that $f$ is parallel.
 The poles of $\tilde\nabla$ are at the branch points of $\pi.$ 
 Let $z$ be a holomorphic coordinate on $\tilde M$ centered at a branch point $P_i$ of $\pi$ such that $\sigma(z)=-z.$
 Let $s_1,s_2$ be a special linear frame of $V$ and let $t_1$ and $t_2=\sigma(t_1)$ be local holomorphic sections in 
 $L_+$ and $L_-$ satisfying \eqref{branch_frame}.
 The connection $\nabla$ on $V\to M$ is determined locally by
\[\nabla s_j=\omega_{1,j} s_1+\omega_{2,j} s_2\]
for $j=1,2,$ where $\omega_{i,j}$ are the locally defined holomorphic $1-$forms. As $\nabla$ and the frame 
are special linear $\omega_{1,1}=-\omega_{2,2}$ holds.  Because $\pi$ has a branch point at $P_i,$
 the connection $1$-forms $\pi^*\omega_{i,j}$ (of $\pi^\nabla$ with respect to $\pi^*s_1,\pi^*s_2$) have zeros
 at $P_i.$ Using \eqref{branch_frame} one can compute the connection $1-$forms of 
 $\tilde\nabla$ with respect to the frame $t_1, t_2=\sigma(t_1)$ of $L_+\oplus L_-.$ It turns out that
 they have first order poles at $P_i.$ Moreover, 
 the residue of $\tilde\nabla$ at $P_i$ is given by
 \begin{equation}\label{residuum}
 res_{P_i}\tilde\nabla=\frac{1}{2} \dvector{1 & -1\\ -1 & 1}
 \end{equation}
 with respect to the frame $t_1, t_2.$ 
 We need to interpret this formula more invariantly. With respect to the direct sum decomposition
 $L_+\oplus L_-$ the connection $\tilde\nabla$ splits
 \begin{equation}\label{split_connection}
 \tilde\nabla=\dvector{\nabla^{+} & \beta^- \\ \beta^+ & \nabla^{-} }.
\end{equation} 
 Here, $\nabla^\pm$ are meromorphic connections on $L_\pm$ with simple poles at the branch points of $\pi,$ and
 $\beta^\pm\in\mathcal M(\tilde M, K_{\tilde M}\Hom(L_\pm, L_\mp))$ are the meromorphic second fundamental forms of 
 $L_\pm$ which also have simple poles at the branch points. Recall that the eigenline bundles are given by
 \begin{equation}\label{line_bundle}
 L_\pm=\pi^*S^*\otimes\tilde \pi^* L^{\pm 1}
 \end{equation}
 for holomorphic line bundles $L^{\pm 1}\in Jac(\tilde M/\Z_3)$ and $\tilde\pi\colon \tilde M\to \tilde M/\Z^3.$    
 Consider the holomorphic section $\wedge\in H^0(\tilde M,\pi^*K_M)$ which has simple zeros at the branch points of 
 $\pi.$ There exists an unique meromorphic connection $\nabla^{K_M}$ on $\pi^*K_M$ such that $\wedge$ is parallel. 
 Then $res_{P_i}\nabla^{K_M}=-1$ at the branch points $P_1,..,P_4.$ As $\pi^*S^2=\pi^*K_M$ there exists a unique 
 meromorphic connection $\nabla^{S^*}$ on $\pi^*S^*$ which has simple poles at the branch points of $\pi$ with 
 residue $\frac{1}{2}.$ Using \eqref{line_bundle}, the description of $\nabla^{S^*}$ and \eqref{residuum} 
 we obtain holomorphic connections $\tilde\nabla^\pm$ on $\tilde\pi^*L^+$ and $\tilde\pi^*L^{-1}$ satisfying the formula
\[\nabla^\pm= \nabla^{S^*}\otimes \tilde\nabla^\pm.\]
Moreover, $\tilde\nabla^\pm$ are dual to each other. As in the proof of Lemma \ref{reduction_to_torus} one can show
that $\tilde\nabla^\pm$ are invariant under $\varphi_2,$ $\varphi_3$ and $\tau$ and that there exists holomorphic 
connections $\nabla^{L^\pm}$ on $L^\pm\to \tilde M/\Z^3$ such that 
\[\tilde\nabla^\pm=\tilde\pi^*\nabla^{L^\pm}.\]
Then, all holomorphic connections on $L=L^+\to \tilde M/\Z^3$ with its fixed holomorphic structure are given by 
$\nabla^{L^+}+\alpha$ for a holomorphic  $1-$form $\alpha\in H^0(\tilde M/\Z^3,K_{\tilde M/\Z^3}).$ 
Clearly, the corresponding effect on the connection 
$\nabla$ on  $V\to M$ is given by the addition of (a multiple of) the symmetric Higgs field $\Psi$ which diagonalizes on 
$\tilde M$ with eigenlines  $L_\pm.$ 

\subsection{The second fundamental forms}\label{The second fundamental forms}
Next, we compute the second fundamental forms $\beta^\pm\in\mathcal M(\tilde M, K_{\tilde M}\Hom(L_\pm, L_\mp))$ 
of the eigenlines of the symmetric Higgs field. We fix some notations first: The symmetries
$\varphi_2$ and $\tau$ of $\tilde M$ yield fix point free symmetries on the torus $\tilde M/\Z_3$ denoted by
the same symbols. The quotient by these actions is again a square torus, 
 denoted by $T^2$, which is fourfold covered by $\tilde M/\Z_3$ and the corresponding map is denoted by
 \[\pi^T\colon\tilde M/\Z_3\to T^2.\]
 Each $L\in Jac(\tilde M/\Z_3)$ is the pull-back of a line bundle 
 $\hat L$ of degree $0$ on $T^2.$ This line
  bundle is not unique. Actually, the pullback map defines a fourfold covering
 \[Jac(T^2)\to Jac(\tilde M/\Z_3).\]
 In particular, there are four different line bundles on $T^2$ which pull-back to the trivial one on $\tilde M/\Z_3.$ These
  are exactly the spin bundles  on $T^2,$ so their square is the trivial holomorphic bundle.
 This implies, that $\hat L^{\pm2}$ is independent of a choice $\hat L\in Jac(T^2)$ which pulls back to a given
$L\in Jac(\tilde M/\Z_3).$ Let $0\in T^2$ be the (common) image of the branch points $P_i$ of $\pi.$
Then every holomorphic line bundle $E\to T^2$ of degree $0$ is (isomorphic to) the line bundle $L(y-0)$ associated to
an divisor of the form $D=y-0$ for some $y\in T^2.$ In particular, for $y\neq0$ there exists a meromorphic section
$s_{y-0}\in\mathcal M(T^2,E)$ with divisor $(s_{y-0})=D.$
Moreover, $y$ is uniquely determined by $E$ and $s_{y-0}$ is unique up to a multiplicative constant.
\begin{Pro}\label{comp_beta}
Let $\nabla$ be a flat Lawson symmetric connection on $M.$ 
Let $L_+=L\in Jac(\tilde M/\Z_3)$ be a non-trivial holomorphic line bundle
which is given by $L=(\pi^T)^*L(x-0)$ for some $x\in T^2$ 
 such that $\Pi(L)=[\nabla''].$  
Then the point $y:=-2x\in T^2$ is not $0$ and the second fundamental form of $L_+$ is
\[\beta^+=\tilde\pi^*(\pi^T)^* s_{y-0}\in\mathcal M(\tilde M, K_{\tilde M}\Hom(L_\pm, L_\mp))=\mathcal M(\tilde M, K_{\tilde M}\tilde\pi^*L^{\mp2})\]
where for
\[s_{y-0}\in\mathcal M(T^2,L(y-0))=\mathcal M(T^2,K_{T^2}L(y-0))\]
the multiplicative constant is  chosen appropriately 
and the pullbacks are considered as pullbacks of (bundle-valued) $1$-forms. If we denote $y^-=-y=2x\in T^2,$ then the
 second fundamental form $\beta^-$ is given by $\beta^-=\tilde\pi^*(\pi^T)^* s_{y^--0}.$
\end{Pro}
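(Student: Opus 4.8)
The plan is to reconstruct the second fundamental form $\beta^+$ by identifying it as a holomorphic homomorphism between line bundles and then pinning down its divisor using the data already computed. Recall from \eqref{split_connection} and \eqref{line_bundle} that $\beta^+\in\mathcal M(\tilde M,K_{\tilde M}\Hom(L_+,L_-))$, and since $L_\pm=\pi^*S^*\otimes\tilde\pi^*L^{\pm1}$ we have $\Hom(L_+,L_-)=L_+^*\otimes L_-=\tilde\pi^*L^{-2}$, so that $\beta^+$ is a meromorphic section of $K_{\tilde M}\tilde\pi^*L^{-2}$. First I would pass to the torus $\tilde M/\Z_3$: as in the proof of Lemma~\ref{reduction_to_torus}, the invariance of $\tilde\nabla$ under $\varphi_2,\varphi_3,\tau$ forces $\beta^+$ to descend, and descending once more through $\pi^T\colon\tilde M/\Z_3\to T^2$ (using that $\hat L^{\pm2}$ is well-defined on $T^2$ independently of the choice of $\hat L$) realizes $\beta^+$ as the pullback of a section of $K_{T^2}L^{-2}$ on $T^2$. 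Since $L=(\pi^T)^*L(x-0)$ gives $L^{-2}\cong L(-2x+0)\cong L((-2x)-0)=L(y-0)$ with $y=-2x$, and $K_{T^2}$ is holomorphically trivial, the target bundle on $T^2$ is exactly $L(y-0)$.

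Second, I would determine the divisor. The key input is that $\tilde\nabla$ is a \emph{flat} meromorphic connection with first-order poles at the branch points $P_i$ whose residue \eqref{residuum} is the fixed matrix $\tfrac12\dvector{1 & -1 \\ -1 & 1}$; since this residue matrix has the nonzero off-diagonal entry $-\tfrac12$, the off-diagonal block $\beta^+$ genuinely has a first-order pole at each $P_i$, and away from the branch points $\beta^+$ is holomorphic because $L_+$ and $L_-$ are honest holomorphic subbundles there. Downstairs on $T^2$ all four branch points $P_i$ map to the single point $0$, so the pole of $\beta^+$ on $T^2$ is a simple pole at $0$ and nowhere else. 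A meromorphic section of $L(y-0)\cong K_{T^2}L(y-0)$ on the torus with a single simple pole at $0$ and no other poles must, by counting degrees on the torus (total number of zeros equals total number of poles), have exactly one simple zero; and the section whose divisor is $(s)=y-0$ is precisely $s_{y-0}$, unique up to a multiplicative constant. This identifies $\beta^+=\tilde\pi^*(\pi^T)^*s_{y-0}$ up to the constant, which is fixed by the normalization implicit in the residue computation. The claim $y\neq0$ follows because $L$ is nontrivial: if $y=-2x=0$ then $L^{-2}$ would be trivial, forcing $L$ to be one of the four spin bundles of $T^2$, but by Proposition~\ref{PI} the spin bundles are exactly the branch points of $\Pi$, contradicting that $\nabla''$ lies in the stable (unbranched) locus where a genuine two-element fiber of $\Pi$ exists.

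Finally, the symmetric statement for $\beta^-$ follows by applying $\sigma$: since $\sigma^*L_\pm=L_\mp$ and $\sigma$ interchanges the two eigenlines, it exchanges the roles of $\beta^+$ and $\beta^-$, which on $T^2$ corresponds to the sign change $x\mapsto -x$, hence $y\mapsto y^-=-y=2x$, giving $\beta^-=\tilde\pi^*(\pi^T)^*s_{y^--0}$ with the analogous normalization.

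I expect the main obstacle to be the descent-and-normalization step rather than the divisor count: verifying carefully that $\beta^+$ descends through both $\tilde\pi$ and $\pi^T$ (the latter requires that the \emph{square} $L^{-2}$, not $L^{-1}$ itself, live on $T^2$, which is exactly why the proposition is phrased in terms of $\hat L^{\pm2}$), and then tracking the multiplicative constant through the explicit frame change \eqref{branch_frame} and the residue normalization \eqref{residuum} so that the identification $\beta^+=\tilde\pi^*(\pi^T)^*s_{y-0}$ holds on the nose and not merely up to an undetermined scalar. The divisor-counting argument on the elliptic curve $T^2$ is routine once the bundle $K_{T^2}L(y-0)$ and the pole structure at $0$ are correctly identified.
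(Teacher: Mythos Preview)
Your strategy differs from the paper's in a substantive way: you want to descend $\beta^+$ to a $1$-form on $T^2$ first and then do a degree-one divisor count on the elliptic curve, whereas the paper stays on $\tilde M$ throughout. There it counts the twelve zeros of $\beta^+$ via Riemann--Roch (genus $5$, so $\deg K_{\tilde M}=8$; four simple poles force twelve zeros), and shows these zeros form a single orbit under $\langle\varphi_2,\varphi_3,\tau\rangle$ by noting that $\varphi_3$ fixes only the $P_i$ while $\varphi_2,\tau$ act freely. Hence all twelve zeros map to a single point $\tilde y\in T^2$, and comparing with the explicit pullback $\tilde\pi^*(\pi^T)^*s_{\tilde y-0}$ (which has the same divisor) forces $\tilde y=y$ because the bundle is $L(y-0)$.

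The gap in your route is precisely the descent step, and your citation of Lemma~\ref{reduction_to_torus} does not close it: that lemma descends \emph{line bundles} (equivalently their flat connections) via monodromy, but $\beta^+$ is an off-diagonal block of a connection. Gauge-equivalence $\varphi^*\tilde\nabla=g^{-1}\tilde\nabla g$ with $g$ diagonal only yields $\varphi^*\beta^+=(h_-^{-1}h_+)\,\beta^+$ where $h_\pm$ are the scalar actions on $L_\pm$; since $h_+h_-=1$ this twist is $h_+^2$, a priori a nontrivial character of the symmetry group, so $\beta^+$ need not be honestly invariant and hence need not descend as a section. The paper's argument sidesteps this by using only that the \emph{divisor} of $\beta^+$ is symmetry-invariant---which does follow immediately from gauge-class invariance regardless of any twist---and then recognising the pullback by its divisor. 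If you repair your argument by first establishing divisor-invariance and then deducing the pullback form, you are essentially reproducing the paper's proof. One further small point: once you know the descended section has a single simple zero at some $p$, you need Abel's theorem on $T^2$ (a section of $L(y-0)$ with divisor $p-0$ forces $p=y$) to locate it; this is implicit in your phrasing but should be stated. Your argument for $y\neq0$ also conflates $L\in Jac(\tilde M/\Z_3)$ with $L(x-0)\in Jac(T^2)$ and need not invoke Proposition~\ref{PI}: it suffices that $L$ nontrivial means $L(x-0)$ is not a spin bundle of $T^2$, hence $L(x-0)^{-2}=L(y-0)$ is nontrivial.
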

\begin{proof}
By assumption $L=(\pi^T)^*L(x-0)$ is not the trivial holomorphic line bundle. Therefore, $L(x-0)$ cannot be a spin bundle 
of $T^2.$ Equivalently, $L(x-0)^{-2}=L(y-0)$ is not the trivial holomorphic line bundle which implies that $y\neq0.$

The gauge equivalence class of the connection $\nabla$ is invariant under the symmetries. Therefore, the set of poles 
and the set of zeros of the second fundamental forms $\beta^\pm$ of the eigenlines of the symmetric Higgs field are 
fixed under the symmetries, too. There are exactly $4$ simple poles of $\beta^+$ and because $\tilde M$ has genus $5$
and the degree of $\Hom(L_\pm, L_\mp))=\tilde\pi^*L^{\mp2}$ is $0$ there are $12$ zeros of $\beta^+$ counted with 
multiplicity. The only fix points of $\varphi_3$ are the branch points $P_i$ of $\pi$ and $\varphi_2$ and $\tau$ are fix 
point free on $\tilde M.$ Therefore, the orbit of a zero of $\beta^+$ under the actions of $\varphi_2,$ $\varphi_3$ and 
$\tau$ consists of exactly $12$ points. This implies that the zeros of $\beta^+$ are simple. Moreover, these $12$ 
points are mapped via $\pi^T\circ\tilde\pi$ to a single point $\tilde y$ in $T^2.$  We claim that $\tilde y=y\in T^2.$ To see 
this, we consider the (bundle-valued) meromorphic $1$-form $\tilde\pi^*(\pi^T)^* s_{\tilde y-0}$ on $\tilde M,$ which has 
simple poles exactly at the branch points $P_i$ of $\pi$ and simple zeros at the preimages of $\tilde y.$ Therefore,
$\tilde\pi^*(\pi^T)^* s_{\tilde y-0}$ is (up to a multiplicative constant) the second fundamental form $\beta^+.$ As
 the bundle $L(y-0)$ is uniquely determined by $L_+$ we also get $\tilde y=y.$
\end{proof}
 \begin{Rem}\label{no_flat_connection}
 In the case of $y=0\in T^2$ there is no meromorphic section in the trivial line bundle $L(y-0)=\underline\C$ 
 with a simple pole at $0.$
 But $y=0$ holds exactly for the trivial 
 bundle $\underline\C\in Jac(\tilde M/\Z_3).$ This line bundle corresponds to the non-stable holomorphic direct sum 
 bundle $S^*\oplus S\to M,$ see the proof of Proposition \ref{PI}.
 As we have seen in Section
 \ref{non_semi_stable} there does not exists a holomorphic connection on $S^*\oplus S\to M.$
 \end{Rem}

By now, we have determined the second fundamental forms up to a constant.
It remains to determine the exact multiplicative constant of \[\hat\gamma^\pm:=s_{y^\pm-0}.\] Note that the involution $\sigma$ on 
$\tilde M$ gives rise
 to involutions on $\tilde M/\Z_3$ and $T^2,$ denoted by the same symbol. Then, $\sigma(\beta^\pm)=\beta^\mp$ and
  $\sigma(\hat\gamma^\pm)=\hat\gamma^\mp.$
From Equations \ref{residuum} and \ref{split_connection} one sees that 
\[\beta^+\beta^-\in\mathcal M(\tilde M, K_{\tilde M}^2)\]
is a well-defined meromorphic quadratic differential with double poles at the branch points $P_1,..,P_4$ and with 
residue
 \[res_{P_i}(\beta^+\beta^-)=\frac{1}{4}.\]
 As the branch order of $\tilde\pi$ at $P_i$ is $2$ we have
 \begin{equation}\label{res_1_36}
  res_{0}(\hat\gamma^+\hat\gamma^-)=\frac{1}{36}.
  \end{equation}
 Together with $\sigma(\hat\gamma^\pm)=\hat\gamma^\mp$ this completely determines $\hat\gamma^\pm$ and 
 therefore 
 also $\beta^\pm$ up to sign. Note that the sign has no invariant meaning as the sign of the off-diagonal terms of the 
 connection can be changed by applying a diagonal gauge with entries $i$ and $-i.$

 \subsection{Explicit formulas}
 We are now going to write down explicit formulas for a flat Lawson symmetric connection $\nabla$ whose underlying 
 holomorphic structure admits a symmetric Higgs field $\Psi$ with $\det\Psi=Q.$
  To be precise, we compute the connection $1$-form of $\pi^*\nabla\otimes\nabla^S$ with respect to some frame,
  where $\nabla^S$ is defined as above by the equation $(\nabla^S\otimes\nabla^S) \omega=0$ for the tautological 
  section $\omega\in H^0(\tilde M,\pi^*K_M).$
 Then $\pi^*\nabla\otimes\nabla^S$ is
 a meromorphic connection on $\tilde\pi^*L^+\oplus\tilde\pi^*L^-\to\tilde M$ with simple, off-diagonal poles 
  at the branch points $P_1,..,P_4$ of $\pi,$ where $L^+$ and $L^-$ are holomorphic line bundles of degree $0$ on the 
  torus $\tilde M/\Z_3$ which are dual to each other and correspond to the eigenlines of the symmetric Higgs field via
  Proposition \ref{PI}.
 
  Recall that $\tilde M/\Z_3$ is a square torus, and we identify it as
 \[\tilde M/\Z_3\cong \C/(2\Z+2i\Z).\]
 We may assume without loss of generality that the half lattice points are exactly the images of the branch points $P_i.$
 The fourfold (unbranched) covering map $\pi^T$ gets into the natural quotient map
 \[\pi^T\colon\tilde M/\Z_3\cong \C/(2\Z+2i\Z)\to \C/(\Z+i\Z)\cong T^2.\]
 Let $E$ be one choice of a holomorphic line bundle on $T^2$ which pulls 
 back to $L^+\to \tilde M/\Z_3.$ As before, it is given by
 $E=L([x]-[0])$
 for some $[x]\in T^2,$ where $[0]\in\C/(\Z+i\Z)\cong T^2$ is the common image of the points $P_i.$

The following lemma is of course well-known. We include it as it produces the trivializing sections which we use to write 
down the connection $1$-form. 
 \begin{Lem}\label{trivializing_section}
 Consider the square torus $T^2=\C/(\Z+i\Z)$ and the holomorphic line bundle $E=L([x]-[0])$ for some $x\in\C.$
  Then there exists a smooth section 
 $\underline 1\in\Gamma(T^2,E)$ such that the holomorphic structure $\dbar^E$ of $E$ is given by
 \[\dbar^E\underline 1=-\pi xd\bar z\underline1.\]
 \end{Lem}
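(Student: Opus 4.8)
The plan is to realize $E=L([x]-[0])$ concretely as a flat line bundle through a factor of automorphy, and then to exhibit the smooth frame $\underline 1$ by hand. Writing $\Lambda=\Z+i\Z$ and $\tau=i$, I would first produce a meromorphic section of $E$ from the Riemann theta function $\theta(z)=\theta(z;\tau)$. Recall $\theta$ is $1$-periodic, transforms under $z\mapsto z+\tau$ by the factor $e^{-\pi i\tau-2\pi iz}$, and has a single simple zero at the Riemann constant $c_0=\tfrac{1+\tau}{2}$. Hence the function
\[
h(z)=\frac{\theta(z-x-c_0)}{\theta(z-c_0)}
\]
has divisor $[x]-[0]$ on $T^2$ (numerator vanishing at $z\equiv x$, denominator at $z\equiv 0$, using $2c_0\equiv 0$), so $h$ is a meromorphic section of $E$. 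A direct computation of the quasi-periodicity from the two transformation rules of $\theta$ shows $h(z+1)=h(z)$ and $h(z+\tau)=e^{2\pi i x}h(z)$, so that $E$ is presented by the constant (character) factor of automorphy $\chi(1)=1$, $\chi(\tau)=e^{2\pi i x}$; in particular $E$ is flat of degree $0$, consistent with $L([x]-[0])$.

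Next I would construct the global smooth frame by solving for a nowhere-vanishing $u(z)$ carrying the same factor of automorphy as $h$, namely $u(z+1)=u(z)$ and $u(z+\tau)=e^{2\pi i x}u(z)$, and I would take the ansatz $u(z)=e^{\pi x(z-\bar z)}$. One checks immediately that $u(z+1)=u(z)$ and $u(z+i)=e^{2\pi i x}u(z)$, so $u$ descends to a smooth nowhere-zero section $\underline 1\in\Gamma(T^2,E)$ (equivalently, $h/u$ is genuinely $\Lambda$-periodic). The holomorphic structure in this frame is then forced: a section $g\,\underline 1$ with $g\in C^\infty(T^2)$ is holomorphic precisely when $gu$ is holomorphic, i.e. $\dbar(gu)=u\,\dbar g+g\,\dbar u=0$, so that the Dolbeault operator must act by $\dbar^E\underline 1=\tfrac{\dbar u}{u}\,\underline 1$. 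Since $\partial_{\bar z}u=-\pi x\,u$, this gives $\tfrac{\dbar u}{u}=-\pi x\,d\bar z$ and hence $\dbar^E\underline 1=-\pi x\,d\bar z\,\underline 1$, as claimed.

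The only genuine obstacle is bookkeeping of normalizations: one must make sure the sign and the shift by the Riemann constant are arranged so that $h$ has divisor exactly $[x]-[0]$ (and not $[0]-[x]$), and correspondingly that the character comes out as $e^{2\pi i x}$ rather than its inverse; the constant $-\pi$ in the answer is pinned down entirely by this choice together with $\tau=i$. I expect no analytic difficulty, since on a curve every $(0,1)$-form is $\dbar$-closed so the operator $\dbar+(-\pi x\,d\bar z)$ is automatically integrable; the content of the lemma is simply the explicit identification of the harmonic representative $-\pi x\,d\bar z$ of the class of $E$ in $Jac(T^2)$, which the frame $u(z)=e^{\pi x(z-\bar z)}$ makes transparent.
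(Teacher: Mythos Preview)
Your proof is correct and is essentially the same argument as the paper's, just organized slightly differently and with a different theta normalization. The paper uses a shifted theta (vanishing at the lattice rather than at the Riemann constant) and packages the two ingredients into the single doubly periodic function $s(z)=\frac{\theta(z-x)}{\theta(z)}e^{\pi x(\bar z-z)}$, which is exactly your $h/u$; checking that $s$ is $\Lambda$-periodic and satisfies $(\dbar-\pi x\,d\bar z)s=0$ amounts to your verification of the automorphy of $h$ and $u$ together with $\partial_{\bar z}u=-\pi x\,u$.
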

 \begin{proof}
 The proof is merely included to fix our notations about the $\Theta$-function of $T^2=\C/(\Z+i\Z),$ see 
 \cite{GH} for details.
 There exists an even entire function $\theta\colon\C\to\C$ which has simple zeros exactly at the lattice points $\Z+i\Z$ 
 and which satisfies
 \begin{equation}\label{def_theta}
 \begin{split}
 \theta(z+1)&=\theta(z)\\
 \theta(z+i)&=\theta(z)\exp(-2\pi i (z-\frac{1+i}{2})+\pi).
 \end{split}
 \end{equation}
 Then the function
 \[s(z):=\frac{\theta(z-x)}{\theta(z)}\exp(\pi x(\bar z-z))\]
is doubly periodic and has simple poles at the lattice points $\Z+i\Z$ and simple zeros at $x+\Z+i\Z.$ Moreover
it satisfies
$\dbar s=\pi x s.$ Therefore, $s$ can be considered as a meromorphic section with respect to the holomorphic structure
$\dbar-\pi xd\bar z$ on $T^2=\C/(\Z+i\Z)$ with simple poles at $[0]\in T^2$ and simple zeros at $[x]\in T^2.$ This implies
that the holomorphic structure $\dbar-\pi x d\bar z$ is isomorphic to the holomorphic structure of $E=L([x]-[0]).$
The image $\underline 1$ of the constant function $1$ under this isomorphism satisfies the required equation
$\dbar^E\underline 1=-\pi xd\bar z\underline1.$
  \end{proof}

The second fundamental forms $\beta^\pm=\tilde\pi^*(\pi^T)^*\hat\gamma^\pm$ can be written down in terms
 of $\Theta$-functions as follows: From Proposition \ref{comp_beta} and the proof of Lemma 
 \ref{trivializing_section} one obtains that (with respect to the smooth trivializing section $\underline 1$ of $E=L([x]-[0])$ 
 and its dual section $\underline 1^* \in\Gamma(T^2,E^*)$) $\hat\gamma^\pm$ are given by
 \begin{equation}\label{explicit_gamma}
 \begin{split}
 \hat\gamma^+(z)\underline 1&=c\frac{\theta(z-y)}{\theta(z)}e^{-2\pi i y\Im(z)}\underline 1^* dz\\
  \hat\gamma^-(z)\underline 1^*&=c\frac{\theta(z+y)}{\theta(z)}e^{2\pi i y\Im(z)}\underline 1dz\\
 \end{split}
 \end{equation}
 for some $c\in\C,$ where $\theta$ is as in the proof of \ref{trivializing_section} and $y=-2x.$ The constant $c\in\C$
 is given by \eqref{res_1_36} as a choice of a square root
 \begin{equation}\label{c_sign}
 c=\frac{1}{6}\sqrt{\frac{\theta'(0)^2}{\theta(y) \theta(-y)}},
 \end{equation}
 where $'$ denotes the derivative with respect to $z.$
  \begin{Rem}
 Note that $c$ can be considered as a single-valued meromorphic function depending on $y\in\C$ with simple poles at
 the integer lattice points by choosing the sign of the square root at some given point $y\notin\Z+i\Z.$
 \end{Rem}
 Altogether, the connection $\pi^*\nabla\otimes\nabla^S$ is given on $T^2=\C/\Z+i\Z$ with respect to the frame 
 $\underline 1,\, \underline 1^*$ by the connection $1$-form
 \begin{equation}\label{connection1form}
  \dvector{\pi a dz-\pi xd\bar z & c\frac{\theta(z+y)}{\theta(z)}e^{2\pi i y\Im(z)} dz \\ c\frac{\theta(z-y)}{\theta(z)}e^{-2\pi i y\Im(z)} dz & -\pi a dz+\pi xd\bar z  }
 \end{equation}
 for some $a\in\C.$ The connection $1$-form \ref{connection1form} is only meromorphic, but the corresponding 
 connection $\nabla$ on the rank 2 bundle over $M$ has no singularities.
Varying $a\in\C$ corresponds to adding a multiple of the symmetric Higgs field on the connection 
 $\nabla.$  
 \begin{Rem}
 In \eqref{connection1form} we have written down the connection $1$-forms on the torus $T^2\cong \C/(\Z+i\Z).$
 But as the fourfold covering $\tilde M/\Z_3\cong \C/(2\Z+2i\Z)\to T^2\cong \C/(\Z+i\Z)$ is simply given by
 \[z\mod 2\Z+2i\Z \, \longmapsto \, z\mod \Z+i\Z\]
 \eqref{connection1form} gives also the connection $1$-form for the connection $\pi^*\nabla\otimes\nabla^S$ on 
 $\tilde M/\Z_3$ with respect to the frame $(\pi^T)^*\underline 1,\, (\pi^T)^*\underline 1^*.$
 \end{Rem}

We summarize our discussion:
 \begin{The}[The abelianization of flat $\SL(2,\C)$-connections]\label{abelianization}
 Let $\dbar$ be a Lawson symmetric semi-stable holomorphic structure on a rank $2$ vector bundle over $M.$
  Assume that $\dbar$ is determined by the non-trivial holomorphic line bundle $L\in Jac(\tilde M/\Z_3),$
 i.e., $\Pi(L)=[\dbar].$
  Then there is a 1:1 correspondence between 
   holomorphic connections on $L\to \tilde M/\Z_3$ and
  flat Lawson symmetric connections $\nabla$ with $\nabla''=\dbar.$ The correspondence is given explicitly by the connection $1$-form \eqref{connection1form}.
  \end{The}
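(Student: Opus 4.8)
The plan is to assemble the correspondence from the constructions already carried out in Section~\ref{sec:flat_connections}, organized as a two-way map together with an inverse. In the forward direction, I start with a flat Lawson symmetric connection $\nabla$ satisfying $\nabla''=\dbar$ where $\Pi(L)=[\dbar]$ for a non-trivial $L\in Jac(\tilde M/\Z_3)$. By Lemma~\ref{exceptional_structures} the holomorphic structure $\dbar$ admits a symmetric Higgs field $\Psi$ with $\det\Psi=Q$, unique up to sign, whose eigenline bundles $L_\pm=\pi^*S^*\otimes\tilde\pi^*L^{\pm1}$ on $\tilde M$ are well-defined. Pulling back $\nabla$ and transporting it along the holomorphic isomorphism $f\colon L_+\oplus L_-\to\pi^*V$ (which is an isomorphism away from the branch points of $\pi$) produces the meromorphic connection $\tilde\nabla$ of \eqref{split_connection}. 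The diagonal blocks, after twisting by $\nabla^{S^*}$, descend via the arguments surrounding \eqref{line_bundle} to $\varphi_2,\varphi_3,\tau$-invariant holomorphic connections $\tilde\nabla^\pm=\tilde\pi^*\nabla^{L^\pm}$ on $\tilde\pi^*L^\pm$, and hence to a holomorphic connection $\nabla^{L^+}$ on $L\to\tilde M/\Z_3$. This defines the forward assignment $\nabla\mapsto\nabla^{L^+}$.

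For the reverse direction, I would reverse this chain of constructions. Given a holomorphic connection on $L\to\tilde M/\Z_3$, its pullback along $\tilde\pi$, combined with the fixed meromorphic connection $\nabla^{S^*}$ on $\pi^*S^*$ (with residue $\tfrac12$ at the branch points) and the second fundamental forms $\beta^\pm$ determined up to sign by Proposition~\ref{comp_beta} and the residue normalization \eqref{res_1_36}, reconstitutes a meromorphic connection $\tilde\nabla$ on $L_+\oplus L_-\to\tilde M$ of the form \eqref{split_connection} with the prescribed residue \eqref{residuum} at each $P_i$. The crucial claim is that transporting $\tilde\nabla$ back through $f$ yields a connection $\pi^*\nabla$ on $\pi^*V$ whose apparent first-order poles at the branch points cancel. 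This is exactly the content of the local frame computation around \eqref{branch_frame}: because the residue of $\tilde\nabla$ is $\tfrac12\left(\begin{smallmatrix}1&-1\\-1&1\end{smallmatrix}\right)$ in the frame $t_1,t_2$, the change of frame to $\pi^*s_1,\pi^*s_2$ absorbs the pole, so the resulting connection on $V\to M$ is genuinely smooth, as asserted beneath \eqref{connection1form}. Finally I would verify that this $\nabla$ is $\sigma$-invariant on $\tilde M$ (hence descends to $M$) and Lawson symmetric, using $\sigma^*L_\pm=L_\mp$, $\sigma(\hat\gamma^\pm)=\hat\gamma^\mp$, and the $\varphi$-invariance of the data.

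To finish, I would check the two assignments are mutually inverse and match the explicit normal form \eqref{connection1form}. The affine structure is transparent: by the discussion preceding Proposition~\ref{comp_beta}, adding a holomorphic $1$-form $\alpha\in H^0(\tilde M/\Z_3,K)$ to $\nabla^{L^+}$ corresponds precisely to adding the constant $a$ in \eqref{connection1form}, i.e.\ to adding a multiple of the symmetric Higgs field $\Psi$ on the $M$-side, so the correspondence is equivariant for the two affine-bundle structures and the parameter $a\in\C$ indexes both families compatibly. Injectivity follows because distinct holomorphic connections on $L$ yield distinct diagonal parts of $\tilde\nabla$, hence distinct gauge classes on $M$; surjectivity is the forward construction above.

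\textbf{Main obstacle.} The delicate point is the pole-cancellation step in the reverse direction: one must show that the meromorphic connection $\tilde\nabla$ assembled from the prescribed residue and the second fundamental forms, when pushed forward through $f$, has \emph{no} genuine singularity on $V\to M$, rather than merely an apparent one. This hinges on matching the residue \eqref{residuum} exactly against the singular behavior of the frame change \eqref{branch_frame} at each branch point, and on the compatibility of the constant $c$ in \eqref{c_sign} with the residue normalization $\operatorname{res}_{P_i}(\beta^+\beta^-)=\tfrac14$. I expect this local analysis at the four branch points to be where the essential care is needed; the global descent and the affine-bundle bookkeeping are then routine given the lemmas already established.
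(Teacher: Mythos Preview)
Your proposal is correct and follows essentially the same route as the paper. The paper presents the theorem as a summary of the preceding discussion in Section~\ref{sec:flat_connections} (note the sentence ``We summarize our discussion:'' immediately preceding the statement), and your organization into forward and reverse assignments, together with the identification of the pole-cancellation at the branch points as the key local check, is exactly the content of that discussion; the paper itself only asserts the smoothness of $\nabla$ in the sentence following \eqref{connection1form} without writing out the local computation you outline.
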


 \subsection{Flat unitary connections} \label{sec:unitary_connections}
A famous result due to  Narasimhan and Seshadri (\cite{NS}) states that for every stable holomorphic structure on a 
complex vector bundle over a compact Riemann surface there exists a unique flat connection which is unitary with 
respect to a suitable chosen metric and whose underlying holomorphic structure is the given one. From the uniqueness 
we observe the following: If the isomorphism class of a a stable holomorphic structure $\dbar$ is invariant under some
automorphisms of the Riemann surface then the gauge equivalence class of the unitary flat connection $\nabla$ with 
$\dbar=\nabla''$ is also invariant under the same automorphisms.
 We apply this to the 
situation of Theorem \ref{abelianization}. 
\begin{The}\label{unitary_a}
Consider a Lawson symmetric holomorphic structure $\dbar$ of rank $2$ on $M$ whose isomorphism class
is given by a non-trivial holomorphic line bundle $L\in Jac(\tilde M/\Z_3),$ i.e., $\Pi(L)=[\dbar].$ 
Let $x\in \C\setminus (\frac{1}{2}\Z+\frac{1}{2}i\Z)$ such that
the holomorphic structure of $E$ is given by \[\dbar^E=\dbar^0-\pi x d\bar z\]
 on $\C\to \tilde M/\Z_3\cong \C/(2\Z+2i\Z).$ 
 Then there exists a unique $a^u=a^u(x)\in\C$ such that the flat Lawson symmetric connection $\nabla$ on $M$ 
 which is given by the connection $1$-form \ref{connection1form} is unitary with respect to a suitable chosen metric.
The function
\[x\mapsto a^u(x)\] is real analytic and odd in $x.$  It satisfies
\[a^u(x+\frac{1}{2})=a^u(x)+\frac{1}{2}\] and
\[a^u(x+\frac{i}{2})=a^u(x)-\frac{i}{2}\]
which means that it gives rise to a well-defined real analytic section $\mathcal U$ of the affine bundle of 
(the moduli space of) flat $\C^*$-connections over the Jacobian of $\tilde M/\Z_3$ away from the origin.
\end{The}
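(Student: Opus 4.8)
The plan is to obtain existence and uniqueness of $a^u(x)$ from abstract structure theory and then to extract the three analytic properties from the behaviour of the explicit normal form \eqref{connection1form} under the operations $x\mapsto-x$ and $x\mapsto x+\tfrac12,\,x+\tfrac i2$ on the parameter.

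First I would settle existence, uniqueness and real-analyticity. For $x\notin\frac12\Z+\frac12 i\Z$ the line bundle $L$ is non-trivial and is not a spin bundle of $\tilde M/\Z_3$, so by Proposition \ref{PI} the bundle $(V,\dbar)$ with $\Pi(L)=[\dbar]$ is stable. By the Theorem of Narasimhan and Seshadri \cite{NS} it carries a unique flat connection $\nabla$ with $\nabla''=\dbar$ that is unitary for a suitable metric, and, as recalled at the start of Section \ref{sec:unitary_connections}, the uniqueness forces its gauge class to be invariant under $\varphi_2,\varphi_3$ and $\tau$. Thus $\nabla$ is flat and Lawson symmetric with $\nabla''=\dbar$, so Theorem \ref{abelianization} assigns to it a unique holomorphic connection on $L$, that is, a unique value $a^u=a^u(x)$ of $a$ in \eqref{connection1form}. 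Real-analyticity then follows because the Narasimhan--Seshadri assignment $\dbar\mapsto\nabla$ is real-analytic (the unitarizing metric solves an elliptic equation depending real-analytically on the holomorphic data, hence varies real-analytically by the analytic implicit function theorem), while $x\mapsto\dbar^E=\dbar^0-\pi x\,d\bar z$ is itself analytic; reading off the $dz$-coefficient of the resulting $1$-form gives that $a^u$ is real-analytic.

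For oddness I would use the involution $\sigma$, which interchanges the eigenlines $L_+$ and $L_-=\sigma(L_+)$ and hence interchanges $L$ and $L^{-1}$; on $T^2$ this is the map $x\mapsto-x$ (so $y=-2x\mapsto-y$), and the two resulting descriptions parametrize one and the same unitary connection. On the normal form the interchange is realized by conjugation with the constant unitary matrix $\dvector{0&1\\1&0}$, which carries the $1$-form with parameters $(x,a)$ to the one with parameters $(-x,-a)$; the sign ambiguity produced by $c(-y)=\pm c(y)$ has no invariant meaning, as it is absorbed by the diagonal gauge $\mathrm{diag}(i,-i)$. Since conjugation by a unitary matrix preserves unitarizability, the unitary representative at $x$ is carried to that at $-x$, giving $a^u(-x)=-a^u(x)$.

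The quasi-periodicity, which I expect to be the main obstacle, I would prove as follows. Replacing $x$ by $x+\frac12$ (resp.\ $x+\frac i2$) tensors $E=L([x]-[0])$ with a $2$-torsion spin bundle of $T^2$, and these pull back to the trivial bundle on $\tilde M/\Z_3$; hence $L$, the structure $\dbar$, and the unitary connection $\nabla$ are unchanged, only its description in the frame of Lemma \ref{trivializing_section} changes. The two descriptions differ by the diagonal gauge $\mathrm{diag}(h,h^{-1})$, where $h$ trivializes the difference bundle on $\C/(2\Z+2i\Z)$ and is determined by $\dbar\log h=-\frac\pi2\,d\bar z$ (resp.\ $-\frac{\pi i}2\,d\bar z$). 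Because that bundle is flat of degree $0$ with trivial pull-back, $h$ is a pure exponential character whose holomorphic logarithmic derivative is the constant $\frac\pi2\,dz$ (resp.\ $-\frac{\pi i}2\,dz$); adding it to the diagonal $(1,0)$-part $\pi a\,dz$ shifts $a$ by exactly $+\frac12$ (resp.\ $-\frac i2$), while the off-diagonal terms, multiplied by $h^{\mp2}$, return to normal form in accordance with the period $1$ of $\theta$ and the lattice change of $y=-2x$. The delicate points here are verifying that the forced shift of the $d\bar z$-coefficient on $T^2$ is compensated, after descent to $\tilde M/\Z_3$, by precisely these holomorphic shifts, and that $h$ involves no theta factor. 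Finally these two relations are exactly the transition functions of the affine bundle of flat $\C^*$-connections over $Jac(\tilde M/\Z_3)$, so together with real-analyticity and the omission of the $2$-torsion points they show that $x\mapsto(x,a^u(x))$ descends to a well-defined real-analytic section $\mathcal U$ away from the origin.
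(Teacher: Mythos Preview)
Your argument is correct and, for existence, uniqueness, real-analyticity, and oddness, it is the same as the paper's. For the quasi-periodicity the paper takes a shorter route: instead of tracking the explicit normal form \eqref{connection1form} through theta identities and verifying that the off-diagonal entries return to standard shape after conjugation by $\mathrm{diag}(h,h^{-1})$, it simply observes that on $\tilde M/\Z_3=\C/(2\Z+2i\Z)$ the \emph{line bundle} connections $d+\pi a\,dz-\pi x\,d\bar z$, $d+\pi(a-\tfrac12)\,dz-\pi(x-\tfrac12)\,d\bar z$ and $d+\pi(a+\tfrac i2)\,dz-\pi(x-\tfrac i2)\,d\bar z$ are gauge equivalent (via the unimodular characters $e^{i\pi\Im z}$ and $e^{i\pi\Re z}$), and then invokes the $1{:}1$ correspondence of Theorem~\ref{abelianization} at the level of gauge equivalence classes to conclude that the associated rank-$2$ connections on $M$ coincide. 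This sidesteps entirely the ``delicate points'' you flag concerning the off-diagonal terms and the absence of theta factors in $h$: those computations are not wrong, but they amount to re-verifying by hand what Theorem~\ref{abelianization} already guarantees abstractly. Your more explicit approach has the minor advantage of exhibiting the gauge concretely in the chosen frame, at the cost of the bookkeeping you anticipated.
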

  \begin{Rem}
 We show in Theorem \ref{extension_dbar0} below that the
 section $\mathcal U$ has a first order pole at the origin.
 \end{Rem}
\begin{proof}
As the unitary flat connections depend (real) analytic on the underlying holomorphic structure, the function
$x\mapsto a^u(x)$ is also real analytic. Moreover, it must be odd in $x$ as the flat connection induced on 
$L^+\to\tilde M/\Z_3$ is dual to the one induced on $L^-\to\tilde M/\Z_3.$ The functional equations are simply
a consequence of the gauge invariance of our discussion: On $\tilde M/\Z_3=\C/(2\Z+2i\Z)$ the flat connections
$d+\pi a dz-\pi x d\bar z,$  $d+\pi (a-\frac{1}{2}) dz-\pi (x-\frac{1}{2})d\bar z$ and 
$d+\pi (a+\frac{i}{2}) dz-\pi (x-\frac{i}{2})d\bar z$ are gauge equivalent as well as the
corresponding flat $\SL(2,\C)$-connections on $M.$
\end{proof}
\begin{Rem}
 The Narasimhan-Seshadri section which maps an isomorphism class of stable holomorphic structures to
 its corresponding gauge class of unitary flat connections is a real analytic section in the holomorphic 
 affine bundle of the moduli space of flat $\SL(2,\C)$ connections to the moduli space of stable holomorphic
 structures. The later space is equipped with a natural symplectic structure. Then, the natural (complex anti-linear)
 derivative of the Narasimhan-Seshadri section can be interpreted as the symplectic form, see for example \cite{BR}.
 \end{Rem}

\section{The exceptional flat $\SL(2,\C)$-connections}\label{exceptional_connections}
In the previous chapter we have studied all flat Lawson symmetric connections on $M$ whose underlying holomorphic 
structures admit symmetric Higgs fields $\Psi$ such that $\det \Psi=Q.$ The holomorphic structures are determined by
non-trivial holomorphic line bundles $L\in Jac(\tilde M/\Z_3),$ see Proposition \ref{PI}. The construction of a connection 
$1$-form as in \eqref{connection1form} breaks down for the trivial holomorphic line bundle 
$\underline\C\to \tilde M/\Z_3,$ because the trivial line bundle corresponds to the holomorphic direct sum bundle
$S\oplus S^*\to M$ which does not admit a holomorphic connection. But as we have already mentioned above,
the gauge orbits of the remaining 
holomorphic structures which admit Lawson symmetric holomorphic connections are infinitesimal near to the gauge 
orbit of $S\oplus S^*\to M$ (see for example the proof of Proposition \ref{PI}). We use this observation to construct the 
remaining flat Lawson symmetric connections as limits of the connections
studied in Theorem \ref{abelianization} when $L$ tends to the trivial holomorphic line bundle. Even more important for our 
purpose, we exactly determine for which meromorphic family of flat line bundle connections on $\tilde M/\Z_3$  the 
corresponding family of flat $\SL(2,\C)$-connections on $M$ extends holomorphically through the points where the 
holomorphic line bundle is the trivial one, see Theorem \ref{extension_dbar0} and Theorem \ref{extension_non_stable} 
below.
\subsection{The case of the stable holomorphic structure}\label{The case of the stable holomorphic structure}
We start our discussion with the case of a Lawson symmetric stable holomorphic structure which does not
admit a symmetric Higgs field with non-trivial determinant. As we have seen, this holomorphic structure
is isomorphic to $\dbar^0.$

Let $\nabla$ be a flat unitary Lawson symmetric connection such that $(\nabla)''=\dbar^0.$
As we have seen in the proof of Lemma \ref{exceptional_structures}, $\dbar^0$ admits a nowhere vanishing 
symmetric Higgs field $\Psi\in H^0(M,K\End_0(V,\dbar^0))$ with $\det\Psi=0.$ The kernel of $\Psi$ is the dual of
the spin bundle $S$ of the Lawson surface.
We split the connection
 \[\nabla=\dvector{\dbar^{S^*} & \bar q \\ 0 & \dbar ^S}+\dvector{\del^{S^*} & 0 \\ -q & \del ^S}\] 
 with respect to the unitary decomposition $V=S^*\oplus S\to M.$ 
 Note that $q$ is a multiple of the Hopf differential $Q$ of the Lawson surface and that 
 $\bar q\in \Gamma(M,\bar KK^{-1})$ is its adjoint with respect to the unitary metric. As explained above, we want to 
 study $\nabla=\nabla^0$ as a limit
of a family of flat Lawson symmetric connections \[t\mapsto\nabla^t,\] 
such that the holomorphic structures vary non-trivially in $t.$
 We restrict to the case where a choice of a corresponding line bundle
$L_t^+\in  Jac(\tilde M/\Z_3)$ with $\Pi(L_t)=[(\nabla^t)'']$ is given by the holomorphic structure 
\[\dbar_0+t d\bar z,\] 
where $\dbar_0=d''$ is the trivial holomorphic structure on $\underline \C\to \tilde M/\Z_3.$ 
As $\Pi$ branches at $\underline \C$ (Proposition
\ref{PI}) this can always be achieved by 
rescaling the family as long as the map $t\mapsto [(\nabla^t)'']\in\mathcal S$ has a branch point of order $1$ at $0.$
Pulling the family of connections back to $\tilde M$ (and applying gauge transformations to them which depend 
holomorphically on $t$ on a disc containing $t=0$) the holomorphic structures of the connections take the following form
\[(\pi^*\nabla^t)''=\dvector{\dbar^{S^*} +t \bar\eta& \bar q 
\\ 0 & \dbar ^S-t\bar\eta},\]
where $\bar\eta=\tilde\pi^*d\bar z.$ A family of symmetric Higgs fields $\Psi_t\in H^0(M, K_M\End_0(V,(\nabla^t)'')$ is given by
\begin{equation}\label{Higgspansion}
\pi^*\Psi_t=\dvector{t c \eta & \omega+t\beta(t)\\ 0 & -t c \eta}
\end{equation}
after pulling them back as $1$-forms to $\tilde M.$
Here  $\beta(t)$ is a $t$-dependent section of 
$\pi^*K_M=K_{\tilde M}\Hom(\pi^*S,\pi^*S^*),$ and $\omega\in H^0(\tilde M,K_{\tilde M}\Hom(\pi^*S,\pi^*S^*))$ is the canonical section which has
 zeros at the branch points of $\pi$ and $c$ is a some non-zero constant.
Note that $\omega$ can be considered as the pull-back of the bundle-valued 
 $1$-form $1\in H^0(M,K\Hom(S,S^*)),$ or as a square root of $\eta.$ With respect to the fixed (non-holomorphic) background decomposition $\pi^*V=\pi^*S^*\oplus \pi^*S$ the eigenlines 
$L_\pm^t$ of $\pi^*\Psi_t$ 
on $\tilde M$ are
\[\dvector{1\\0}\] and 
\[\dvector{1 \\-2 c\omega t+t^2( ...)}.
\]
Therefore the expansion in $t$ of the singular gauge transformation 
$f_t\colon L_+^t\oplus L_-^t\to \pi^*V=\pi^*S^*\oplus \pi^*S$ is given by
\[\dvector{1 & 1\\0 &-2c\omega t+t^2(...)}.
\]
The expansion of $\pi^*\nabla^t$ is of the form
\[\pi^*\nabla^t=\pi^*\nabla+t\dvector{ \bar\eta & 
0 \\ 0 & -\bar\eta}+t \Gamma(t),
\]
where $\Gamma(t)\in\Gamma(\tilde M,K_{\tilde M}\End_0(V))$ depends holomorphically on $t.$
Applying the gauge $f_t$ we obtain the following asymptotic behavior
\[\nabla^t\cdot f_t=\frac{1}{t}\dvector{-\frac{\pi^*q}{2c\omega} & 0\\ 0 &\frac{\pi^*q}{2c\omega}}+..\, .
\]
The pullback $\pi^*q\in H^0(K_{\tilde M}K_M)$ has zeros of order $3$ at the branch points of $\pi$ and therefore it is a 
constant multiple of $\eta \omega.$ Hence, the holomorphic line bundle connections on $E^t$ given by the $1:1$ 
correspondence in Theorem \ref{abelianization}
have the following expansion
\begin{equation}\label{explicit_expansion_dbar0_tilde}
\nabla^{E^t}=d+td\bar z+\frac{\tilde c}{t}dz+\hat e(t)dz
\end{equation}
for some holomorphic function $\hat e(t).$ In order to determine $\tilde c,$ we expand the
family of equations $(\nabla^t)''\Psi_t=0$ as follows:
\[0=(\pi^*\nabla^t)''\pi^*\Psi_t=t\dvector{0 & 
-2\pi^*\bar q c \eta+2\omega \bar \eta+\dbar^{\pi^*K_M}\beta(0) \\0 & 0}+t^2(...).
\]
As we have fixed $\omega\in H^0(\tilde M,K_{\tilde M}\Hom(S,S^*))=H^0(\tilde M,\pi^*K_M)$ up to sign by
$\omega^2=\eta=\tilde\pi^*dz$ we obtain from Serre duality applied to the bundle $\pi^*K_M$
\begin{equation}\label{24i}
\int_{\tilde M}\pi^*\bar q c \eta \omega=\int_{\tilde M}\bar \eta \omega^2=3 \int_{\tilde M/\Z_3}d\bar z\wedge dz=24i.
\end{equation}
Recall that we have identified $\tilde M/\Z_3\cong \C/(2\Z+2i\Z)$ and $dz$ is the corresponding differential.
The degree of $\pi^*S^*\to\tilde M$ is $-2$ and we obtain from the flatness of $\nabla$ that
\begin{equation}\label{4pii}
4\pi i=\int_{\tilde M}\pi^*\bar q\wedge\pi^*q.
\end{equation}\label{asymptotic_constant}
Combining \eqref{24i} and \eqref{4pii} we obtain
\begin{equation}
-\frac{\pi^*q}{2c\omega}=-\frac{\pi}{12}\eta,
\end{equation}
which exactly tells us the asymptotic of the family \ref{explicit_expansion_dbar0_tilde}.
\begin{The}\label{extension_dbar0}
Let $\nabla^t$ be a holomorphic family of flat Lawson symmetric connections on $M$ such that $(\nabla^0)''$ is 
isomorphic to $\dbar^0.$ If
$t\mapsto[(\nabla^t)'']\in\mathcal S$ branches of order $1$ at $t=0,$ then, after reparametrization the family, 
$\nabla^t$ induces
by means of Theorem \ref{abelianization} and \eqref{connection1form}
a meromorphic family of flat connections of the form 
\begin{equation}\label{explicit_expansion_dbar0}
\tilde\nabla^{t}=d+td\bar z-\frac{\pi}{12t}dz+ t e(t)dz
\end{equation}
on $\underline\C\to \tilde M/\Z_3,$ where $e(t)$ is a holomorphic function in $t.$

Conversely, let $\tilde\nabla^t$
be a meromorphic family of flat connections on 
$\underline\C\to \tilde M/\Z_3$ of the form \ref{explicit_expansion_dbar0}.
Then the induced family of flat Lawson symmetric connections $\nabla^t$  
on the complex rank $2$ bundle $V\to M$ 
extends (after a suitable $t-$dependent gauge) holomorphically to $t=0$
such that  $\nabla^0$ is a flat Lawson symmetric connection and
$(\nabla^0)''$ is isomorphic to $\dbar^0.$
\end{The}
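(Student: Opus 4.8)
The plan is to prove the two implications separately. The forward implication is essentially a summary of the degeneration analysis already carried out: the expansion \eqref{explicit_expansion_dbar0_tilde}, together with the identification of its leading coefficient through \eqref{24i} and \eqref{4pii}, produces a meromorphic family $d+t\,d\bar z+\tfrac{\tilde c}{t}\,dz+\hat e(t)\,dz$ with $\tilde c=-\tfrac{\pi}{12}$, so all that is left is to see that the holomorphic part satisfies $\hat e(0)=0$. I would deduce this from the odd symmetry of the abelianized connection under $t\mapsto -t$. In the chosen normalization $t$ is the line bundle coordinate, i.e. $L^+_t$ has holomorphic type $\dbar_0+t\,d\bar z$ and $L^-_t=\sigma(L^+_t)$ has type $\dbar_0-t\,d\bar z$; since the two eigenlines abelianize one and the same connection $\nabla^t$ and the induced connections on $L^+_t$ and $L^-_t$ are mutually dual, the $dz$-coefficient $g(t)=-\tfrac{\pi}{12t}+\hat e(t)$ must be odd in $t$. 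This is exactly the oddness recorded for the unitary section in Theorem \ref{unitary_a}. As $-\tfrac{\pi}{12t}$ is already odd, $\hat e$ is odd, hence $\hat e(0)=0$ and $\hat e(t)=t\,e(t)$ with $e$ holomorphic.

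The converse implication carries the real content, and the plan is to run the degeneration backwards. For $t\neq0$ the form \eqref{explicit_expansion_dbar0} is a flat connection on $L^+_t=E^{x(t)}$ with $x(t)=-t/\pi$; together with its dual on $L^-_t$ it defines, through the meromorphic eigenline gauge $f_t\colon L^+_t\oplus L^-_t\to\pi^*V$ of Section \ref{The direct image bundle}, the pulled-back connection $\pi^*\nabla^t$ and hence $\nabla^t$ on $M$ by means of Theorem \ref{abelianization} and \eqref{connection1form}. The task is to show $\pi^*\nabla^t$ admits a holomorphic limit as $t\to0$. I would expand each ingredient: the gauge $f_t\sim\dvector{1&1\\0&-2c\omega t}$, whose inverse therefore blows up like $1/t$; the off-diagonal second fundamental forms $\beta^\pm$, which blow up like $1/t$ because the normalizing constant $c$ of \eqref{c_sign} has a simple pole as $y=-2x\to0$; and the diagonal pole $-\tfrac{\pi}{12t}\,dz$. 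The point is that after forming the gauge transform by $f_t$, i.e. $f_t^{-1}(\cdot)f_t+f_t^{-1}df_t$, these three sources of $1/t$-singularities cancel, leaving $\pi^*\nabla^t=\pi^*\nabla+t(\dots)$ with holomorphic dependence on $t$, where $\pi^*\nabla$ is the pull-back of the exceptional connection $\dvector{\dbar^{S^*}&\bar q\\0&\dbar^S}+\dvector{\del^{S^*}&0\\-q&\del^S}$.

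It then remains to check that the limit $\nabla^0$ has the asserted properties. It descends to $M$ and is Lawson symmetric because these are closed conditions satisfied for every $t\neq0$; it is flat as a limit of flat connections; and $(\nabla^0)''$ is the non-split extension $0\to S\to V\to S^*\to0$, hence isomorphic to $\dbar^0$ rather than to the split bundle $S\oplus S^*$, because the off-diagonal term $\bar q$ surviving in the limit is a nonzero multiple of the conjugate of the Hopf differential and so represents a nonzero extension class. This nonvanishing is precisely the contribution of the $t\,d\bar z$ term transported through $f_t$.

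The main obstacle is the pole cancellation in the converse: checking that the $1/t$ contributions from the diagonal pole, from the degenerating gauge $f_t^{-1}$, and from the blowing-up forms $\beta^\pm$ cancel exactly. This is the forward computation read in reverse, and it is exactly the residue $-\tfrac{\pi}{12}$ forced by \eqref{24i}--\eqref{4pii} that makes the cancellation happen; any other value would leave a genuine pole and the family would fail to extend. I expect the most delicate bookkeeping to be matching the $\theta$-function asymptotics of \eqref{explicit_gamma} and \eqref{c_sign} as $y\to0$ against the eigenline expansion of $f_t$.
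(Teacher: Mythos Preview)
Your oddness argument for the forward direction has a genuine gap. The eigenline duality tells you that a \emph{fixed} connection $\nabla^t$ abelianizes to $g(t)$ via $L^+_t$ and to $-g(t)$ via $L^-_t$. But $L^-_t$ sits at line-bundle parameter $-t$, and at that parameter the family you are abelianizing is $\nabla^{-t}$, not $\nabla^t$. So duality yields $-g(t)$ as an abelianization of $\nabla^t$ at parameter $-t$, while $g(-t)$ is by definition the abelianization of $\nabla^{-t}$. These coincide only if $\nabla^{-t}$ is gauge equivalent to $\nabla^t$, which is not part of the hypothesis (after reparametrization $t$ is the original holomorphic parameter, not a square root of it). Concretely, perturbing by $t\,\Psi_t$ with $\Psi_t$ as in \eqref{Higgspansion} contributes a term of order $t^2$ to $g$ and destroys oddness while still satisfying all hypotheses of the theorem.

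The paper avoids this by a two-step reduction you have not carried out. First, for the family with $\nabla^0$ unitary, one invokes the oddness of the Narasimhan--Seshadri section $a^u$ from Theorem \ref{unitary_a}; this is the oddness you cite, but it is a property of the \emph{unitary} section, not of an arbitrary holomorphic family, and it gives $\hat e(0)=0$ in that case. Second, any other holomorphic family differs from this one by a holomorphic family of symmetric Higgs fields $h(t)\Psi_t$, and by \eqref{Higgspansion} such a Higgs field has diagonal entries $\pm h(t)\,t\,c\,\eta$; hence its contribution to the abelianized $dz$-coefficient is $O(t)$, preserving $\hat e(0)=0$.

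Your plan for the converse is essentially what the paper means by ``reversing the arguments'' and is sound. One slip: the extension realizing $\dbar^0$ is $0\to S^*\to V\to S\to 0$ (stable, with the degree $-1$ bundle as subbundle), not $0\to S\to V\to S^*\to 0$; the latter is the non-semi-stable structure treated in Theorem \ref{extension_non_stable}, distinguished from the present case precisely by the opposite sign $+\tfrac{\pi}{12}$ of the residue.
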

\begin{proof}
Our primarily discussion was restricted to the case where $\nabla^0$ is unitary.
In that case it remains to show that the function $\hat e(t)$ in \eqref{explicit_expansion_dbar0_tilde} 
has a zero at $t=0.$ This follows from the fact that the function $a^u$ in Theorem \ref{unitary_a} is odd.
For the general case we need to study the effect of adding a holomorphic family of Lawson symmetric Higgs fields
\[\Psi(t)\in H^0(M;K\End_0(V,(\nabla^t)'')).\]
Such a holomorphic family of Higgs fields is given by 
\[h(t)\dvector{t c \eta & \omega+t\beta(t)\\ 0 & -t c \eta}\]
for some function $h(t)$ which is holomorphic in $t,$
see  \eqref{Higgspansion}.
From this the first part easily follows. Moreover, by reversing the arguments one also obtains a proof of the converse 
direction.
\end{proof}
\begin{Cor}
The unitarizing function $a^u\colon\C\setminus\frac{1}{2}\Z+\frac{i}{2}\Z\to\C$ in Theorem \ref{unitary_a} is given by
\[a^u(x)=-\frac{1}{12\pi}\frac{\theta'(-2x)}{\theta(-2x)}+\frac{1}{12\pi}\frac{\theta'(2x)}{\theta(2x)}+\frac{1}{3}x+\frac{2}{3}\bar x+b(x),\]
where $\theta$ is the $\Theta$-function as in \eqref{def_theta}, $\theta'$ is its derivative  
and
 $b(x)\colon \C\to\C$ is an odd smooth function which is doubly periodic with respect to
 the lattice $\frac{1}{2}\Z+\frac{i}{2}\Z.$
\end{Cor}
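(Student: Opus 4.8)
The plan is to exhibit the right-hand side as the unique combination of an explicit meromorphic piece and an explicit $\R$-linear piece that reproduces the three structural features of $a^u$ already established---oddness, the two functional equations of Theorem \ref{unitary_a}, and the pole at the origin coming from Theorem \ref{extension_dbar0}---and then simply to read off that the leftover has the asserted properties. Concretely, I would set
\[
g(x):=\frac{1}{12\pi}\frac{\theta'(2x)}{\theta(2x)}-\frac{1}{12\pi}\frac{\theta'(-2x)}{\theta(-2x)},\qquad m(x):=g(x)+\tfrac13 x+\tfrac23\bar x,
\]
and define $b:=a^u-m$. The claim of the corollary is then precisely that $b$ is a smooth odd function, doubly periodic for $\tfrac12\Z+\tfrac{i}{2}\Z$, so everything reduces to checking that $m$ has the same oddness, the same functional equations, and the same singularities as $a^u$.

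Oddness of $m$ is immediate: $g(-x)=-g(x)$ because $x\mapsto-x$ interchanges the two summands of $g$ and reverses the overall sign, while $\tfrac13 x+\tfrac23\bar x$ is visibly odd; since $a^u$ is odd by Theorem \ref{unitary_a}, $b$ is odd. For the functional equations I would use the quasi-periodicity \eqref{def_theta}. Period $1$ of $\theta$ gives $\theta'(2x+1)/\theta(2x+1)=\theta'(2x)/\theta(2x)$, hence $g(x+\tfrac12)=g(x)$, whereas the factor in the $i$-direction yields $\theta'(w+i)/\theta(w+i)=\theta'(w)/\theta(w)-2\pi i$, so that $g(x+\tfrac{i}{2})=g(x)-\tfrac{i}{3}$. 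Combining these with the transformation of $\tfrac13 x+\tfrac23\bar x$ one obtains $m(x+\tfrac12)=m(x)+\tfrac12$ and $m(x+\tfrac{i}{2})=m(x)-\tfrac{i}{2}$, which are exactly the relations satisfied by $a^u$; indeed the coefficients $\tfrac13$ and $\tfrac23$ are forced by this pair of equations. Consequently $b(x+\tfrac12)=b(x)$ and $b(x+\tfrac{i}{2})=b(x)$, so $b$ is doubly periodic for $\tfrac12\Z+\tfrac{i}{2}\Z$.

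It remains to match singularities, and this is the main obstacle. Since $\theta$ has a simple zero at every lattice point, $g$ has a simple pole at each point of $\tfrac12\Z+\tfrac{i}{2}\Z$; a short residue computation (the two logarithmic derivatives contributing residues $+\tfrac12$ and $-\tfrac12$ in the variable $x$) shows that this pole has residue $\tfrac{1}{12\pi}$. I would then invoke Theorem \ref{extension_dbar0}: along a family through $\dbar^0$ the induced connection on $\underline\C\to\tilde M/\Z_3$ has the universal singular part $-\tfrac{\pi}{12t}dz$ of \eqref{explicit_expansion_dbar0}, and under the identification of the holomorphic-structure parameter $t$ with $-\pi x$ (the holomorphic structure being $\dbar^0-\pi x\,d\bar z$ in Theorem \ref{unitary_a}) this says that the singular part of $a^u$ at $x=0$ is exactly the simple pole $\tfrac{1}{12\pi x}$. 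The delicate point is that $a^u$ is only real-analytic, so one must argue that its singularity is genuinely this holomorphic simple pole with residue $\tfrac{1}{12\pi}$ and nothing worse (no $\bar x$-type or higher-order singular contribution); this is exactly what the regularity of the remainder $t\,e(t)$ in \eqref{explicit_expansion_dbar0} furnishes. The functional equations established above then transport this identical simple pole with residue $\tfrac{1}{12\pi}$ to every point of $\tfrac12\Z+\tfrac{i}{2}\Z$, so the poles of $a^u$ and of $m$ cancel in $b$. Hence $b$ extends across $\tfrac12\Z+\tfrac{i}{2}\Z$ to a globally defined function that is smooth---in fact real-analytic, being the difference of the real-analytic $a^u$ and the real-analytic $m$---odd, and doubly periodic, which is the assertion.
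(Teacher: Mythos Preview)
Your argument is correct and follows essentially the same approach as the paper: define the explicit candidate $m$ (the paper calls it $\tilde a$), verify that it is odd, satisfies the two functional equations of Theorem~\ref{unitary_a}, and has the correct simple pole at the lattice points via Theorem~\ref{extension_dbar0} and the reparametrization $t=-\pi x$, and then conclude that $b=a^u-m$ is smooth, odd, and doubly periodic. You have simply supplied more of the routine computations (the quasi-periodicity of $\theta'/\theta$, the residue, and the regularity of the remainder) than the paper spells out.
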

\begin{proof}
The function
$\tilde a\colon\C\setminus\frac{1}{2}\Z+\frac{i}{2}\Z\to\C$ defined by
 \[\tilde a(x)=-\frac{1}{12\pi}\frac{\theta'(-2x)}{\theta(-2x)}+\frac{1}{12\pi}\frac{\theta'(2x)}{\theta(2x)}+\frac{1}{3}x+\frac{2}{3}\bar x\]
is an odd function in $x$ which satisfies the same functional equations (see Theorem \ref{unitary_a}) as $a^u.$
Note that the parametrization of the family of holomorphic rank $1$ structures in Theorem \ref{unitary_a} and in 
Theorem \ref{extension_dbar0} differ by the multiplicative factor $-\pi.$ Therefore, $\tilde a$ has the right asymptotic 
behavior at the lattice points $\frac{1}{2}\Z+\frac{i}{2}\Z.$ So the difference $b=a^u-\tilde a$ is an odd, smooth and 
doubly periodic function.
\end{proof}

\subsection{The case of the non-stable holomorphic structure}
We have already seen in Section \ref{non_semi_stable} that every flat Lawson symmetric connection on $M$
whose holomorphic structure is not semi-stable is gauge equivalent to
\[\nabla=\dvector{&\nabla^{spin*}& 1\\ & \vol+c Q & \nabla^{spin}}\]
with respect to $V=S^*\oplus S\to M.$ 
In this formula $\nabla^{spin}$ and $\vol$ are induced by the Riemannian metric of 
constant curvature $-4,$ $c\in\C$ and $Q$ is the Hopf differential of the Lawson surface. The gauge 
orbit of the holomorphic structure $\nabla''$ is infinitesimal close to the gauge orbits of the holomorphic structures
$\dbar^0$ and $\dbar^S\oplus\dbar^{S^*}.$ As in Section \ref{The case of the stable holomorphic structure},
we approximate $\nabla$ by a holomorphic family of flat Lawson symmetric connections $t\mapsto \nabla^t$
such that the isomorphism classes of the holomorphic structures $(\nabla^t)''$ vary in $t.$
We obtain a similar result as Theorem \ref{extension_dbar0}.
\begin{The}\label{extension_non_stable}
Let $\nabla^t$ be a holomorphic family of flat Lawson symmetric connections on $M$ such that
 $(\nabla^0)''$ is isomorphic to the non-trivial 
extension $S\to V\to S^*$ and such that 
$t\mapsto[(\nabla^t)'']\in\mathcal S$ branches of order $1$ at $t=0.$ After reparametrization the family, 
$\nabla^t$ corresponds (via Theorem \ref{abelianization} and \eqref{connection1form})
to a meromorphic family of flat connections $\tilde\nabla^t$ on $\underline\C\to \tilde M/\Z_3$ of the form 
\begin{equation}\label{explicit_expansion_dbar0_uniform}
\tilde\nabla^{t}=d+td\bar z+\frac{\pi}{12t}dz+ t e(t)dz,
\end{equation}
where $e(t)$ is holomorphic in $t.$

Conversely, let $\tilde\nabla^t$
be a meromorphic family of flat connections on 
$\underline\C\to \tilde M/\Z_3$ of the form \ref{explicit_expansion_dbar0_uniform}.
Then the induced family of flat Lawson symmetric connections $\nabla^t$  
on the complex rank $2$ bundle $V\to M$ 
extends (after a suitable $t-$dependent gauge) holomorphically to $t=0$
such that $(\nabla^0)''$ is isomorphic to the non-trivial extension $0\to S\to V\to S^*\to 0.$ Moreover,
$\nabla^0$ is gauge equivalent to the uniformization connection (see \eqref{uniformization_connection}) if the 
function $e$ has a zero at $t=0.$
\end{The}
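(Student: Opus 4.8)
The plan is to run the degeneration argument of Theorem \ref{extension_dbar0} almost verbatim, with the uniformization connection \eqref{uniformization_connection} in the role that the Narasimhan--Seshadri unitary connection played for $\dbar^0$, and with the holomorphic line $S^*$ replaced throughout by its dual $S$. I would start from $\nabla^u$, split it with respect to $V=S^*\oplus S$ as in the normal form of Proposition \ref{non_semi_stable_connection}, and realize $\nabla^u$ as the limit $t\to0$ of a holomorphic family $\nabla^t$ of flat Lawson symmetric connections whose underlying bundles $L_t\in Jac(\tilde M/\Z_3)$ approach $\underline\C$ with $t\mapsto[(\nabla^t)'']$ branching to first order, so that after rescaling the line-bundle structure on $\underline\C\to\tilde M/\Z_3$ is $\dbar_0+t\,d\bar z$ exactly as in \eqref{explicit_expansion_dbar0_tilde}. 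Pulling back to $\tilde M$, the eigenlines of the associated symmetric Higgs fields collapse as in \eqref{Higgspansion} and the singular gauge $f_t\colon L_+^t\oplus L_-^t\to\pi^*V$ again blows up to first order; reading off the leading term of $\nabla^t\cdot f_t$ produces, through Theorem \ref{abelianization} and \eqref{connection1form}, a meromorphic family on $\underline\C$ with a simple pole in its $dz$-coefficient.

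The heart of the matter is the sign of that residue. In Theorem \ref{extension_dbar0} the value $-\frac{\pi}{12}$ was pinned down by combining the Serre-duality identity \eqref{24i} with the flatness identity \eqref{4pii}, whose right-hand side $4\pi i$ equals $-2\pi i\deg(\pi^*S^*)=-2\pi i\cdot(-2)$. The integral \eqref{24i} records only the covering geometry through $\omega^2=\eta$ and $\int_{\tilde M/\Z_3}d\bar z\wedge dz$ and is insensitive to whether the destabilizing line is $S$ or $S^*$, so it remains $24i$. By contrast, for the uniformization connection the destabilizing holomorphic line is $S$ of degree $+1$, so the relevant diagonal block of $F^{\nabla}=0$ carries the curvature of the spin connection on $S$ and integrates to $-2\pi i\deg(\pi^*S)=-4\pi i$, the opposite sign. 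Feeding $-4\pi i$ in place of $4\pi i$ into the final step turns $-\frac{\pi}{12}\eta$ into $+\frac{\pi}{12}\eta$, which is exactly the residue $+\frac{\pi}{12t}$ of \eqref{explicit_expansion_dbar0_uniform}. As a consistency check, Remark \ref{two_points} presents the family degenerating to $\dbar^0$ and the family degenerating to the non-stable extension as related, for $t\neq0$, by the singular diagonal gauge $\mathrm{diag}(t^{-1/2},t^{1/2})$ on $S\oplus S^*$; its blow-up at $t=0$ is precisely what lets the two limits, and hence the two residues, disagree.

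For the converse I would reverse the asymptotics: given any family of the form \eqref{explicit_expansion_dbar0_uniform}, the prescribed pole $+\frac{\pi}{12t}\,dz$ is exactly the residue forced by the degeneration above, so applying $f_t^{-1}$ together with the gauge freedom of Theorem \ref{abelianization} cancels the pole and extends $\nabla^t$ holomorphically across $t=0$ to a flat Lawson symmetric connection $\nabla^0$ whose holomorphic structure is the non-trivial extension $0\to S\to V\to S^*\to0$; it cannot be $\dbar^0$, since that limit is governed by the opposite residue. For the final clause I would track the finite part. Carried back to $V$ through the first-order singular gauge $f_t$, which rescales the off-diagonal entries by $\sim t^{-1}$, the subleading term $t\,e(t)\,dz$ contributes a finite multiple of $e(0)$ to the off-diagonal of $\nabla^0$; via the Higgs-field expansion analogous to \eqref{Higgspansion} this finite term is a multiple of the symmetric Higgs field $\bigl(\begin{smallmatrix}0&Q\\0&0\end{smallmatrix}\bigr)$ whose addition to $\nabla^u$ is parametrized by $C\in\C$ in Proposition \ref{non_semi_stable_connection}. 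Hence $e(0)$ is a nonzero constant times $C$, and $\nabla^0$ is gauge equivalent to the uniformization connection exactly when $e(0)=0$. Unlike the $\dbar^0$ case, where oddness of the unitarizing function $a^u$ of Theorem \ref{unitary_a} forces the analogous finite part to vanish identically, here there is no unitarity constraint --- the non-semistable bundle admits no unitary flat connection --- so $e(0)$ is a genuinely free parameter labelling the line of Proposition \ref{non_semi_stable_connection}.

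The step I expect to require the most care is making the residue sign rigorous. Both the $\dbar^0$ computation and its non-stable counterpart pass through several sign-sensitive normalizations --- the choice $\omega^2=\eta$, the orientation of the Serre-duality pairing in \eqref{24i}, and the labelling of which eigenline is $L_+$ --- and one must confirm that exactly one of them reverses, so that the magnitude $\frac{\pi}{12}$ is preserved while the sign flips; the cleanest way to see this is that \eqref{24i} is a fixed covering integral while \eqref{4pii} is governed by $\deg S=+1$ rather than $\deg S^*=-1$. The bookkeeping for the final clause, relating $e(0)$ to the Higgs parameter $C$ through the blow-up of $f_t$, is the other delicate point; the gauge of Remark \ref{two_points}, being itself singular at $t=0$, serves to corroborate these signs but cannot replace the direct computation.
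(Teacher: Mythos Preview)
Your strategy for the residue computation is sound and close in spirit to the paper's, though the paper actually takes a slightly different route: rather than rerunning the whole degeneration with $\nabla^u$ in place of the Narasimhan--Seshadri connection, it introduces an auxiliary holomorphic family $\hat\nabla^t$ with $(\hat\nabla^t)''\cong(\nabla^t)''$ for $t\neq0$ but with $\hat\nabla^0$ \emph{unitary} (hence $(\hat\nabla^0)''\cong\dbar^0$), so that Theorem~\ref{extension_dbar0} applies directly to $\hat\nabla^t$. The difference $\Psi_t=\hat\nabla^t-g_t^{-1}\nabla^t g_t$ is then a family of symmetric Higgs fields with $\det\Psi_t=\tfrac{q}{t}+\cdots$, and one reads off the residue shift from this. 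Your direct approach and this comparison approach both isolate the sign flip in the same place: the degree identity $-2\pi i\deg(S^*)=\int_M\bar q\wedge q=-\int_M 1\wedge \vol$, where the right-hand integral is what appears for the uniformization connection. Either route yields $+\tfrac{\pi}{12}$.

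There is, however, a genuine gap in your proposal: you do not establish the vanishing of the \emph{constant} term in the $dz$-coefficient, i.e.\ that the expansion really has the shape $\tfrac{\pi}{12t}+t\,e(t)$ rather than $\tfrac{\pi}{12t}+c_0+t\,e(t)$. In Theorem~\ref{extension_dbar0} this vanishing came from the oddness of $a^u$, and you correctly note that no unitarity argument is available here since the non-semistable bundle carries no flat unitary connection. But then you offer no replacement. The paper fills this gap by invoking an \emph{additional} holomorphic symmetry $\tilde\tau\colon M\to M$ (outside the group generated by $\varphi_2,\varphi_3,\tau$) which induces $z\mapsto iz$ on $\tilde M/\Z_3$ and satisfies $\tilde\tau^*Q=-Q$; the gauge class of the uniformization connection is invariant under $\tilde\tau$ as well, and this forces the $0$th-order term to vanish. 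The same $\tilde\tau$-argument also gives the final clause (that $e(0)=0$ for the uniformization connection itself), whereas you propose to extract this by tracking the blow-up of $f_t$; your argument for that clause is plausible but would need the constant-term vanishing already in hand to be cleanly separated from it.
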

\begin{proof}
Consider a holomorphic family of flat Lawson symmetric connections $\hat\nabla^t$ such that
$(\nabla^t)''$ is isomorphic to $(\hat\nabla^t)''$ for all $t$ and such that $\hat\nabla^0$ is unitary. In particular, 
$(\hat\nabla^0)''$ is isomorphic to $\dbar^0.$ Then, after applying the $t$-dependent gauge $g_t$
 the difference \[\Psi_t:=\hat\nabla^t-g_t^{-1}\nabla^tg\in H^0(M,K\End_0(V,(\hat\nabla^t)''))\]
satisfies 
\[\det\Psi_t=\frac{q}{t}+\,\text{higher order terms},\]
where $q$ is a non-zero multiple of the Hopf differential. 
This implies, that the line bundle connections
$\tilde\nabla^t$ have an expansion like
\[\tilde\nabla^{t}=d+td\bar z+\frac{c}{t}dz+\,\text{higher order terms}\]
for some non-zero $c\in\C.$
Then, analogous to the computation in Section \ref{The case of the stable holomorphic structure},
 one obtains $c=\frac{1}{12\pi}.$ Note that the reason for the different signs is because of
 the last sign in the degree formula for $S^*:$
 \[-2\pi i\deg(S^*)=\int_M\bar q\wedge q=-\int_M 1\wedge vol.\]
 To show that the $0.$-order term in the expansion of $\tilde\nabla^t$ vanishes we first observe that there exists
 an additional (holomorphic) symmetry $\tilde\tau\colon M\to M$ which induces the symmetry $z\mapsto iz$ on
 $\tilde M/\Z_3.$ Note that $\tilde\tau^*Q=-Q.$ Because the gauge equivalence class of the uniformization connection 
 (\eqref{uniformization_connection}) is also invariant under $\tilde\tau,$ one easily gets (as in the proof of
 Theorem \ref{explicit_expansion_dbar0_tilde}) that the $0.$-order term vanishes. Moreover one obtains that in the case 
 of the uniformization connection also the first order term vanishes.
 \end{proof}

\section{The spectral data}\label{sec:the_spectral_data}
So far we have seen that the generic Lawson symmetric flat connection is determined (up to gauge equivalence), after 
the choice of one eigenline bundle of a symmetric Higgs field, by a flat line bundle connection on a square torus. 
Moreover, 
the remaining flat connections are explicitly given as limiting cases of the above construction. We now apply
these results to the case of the family of flat connections $\nabla^\lambda$ associated to a minimal
 surface. We assume that the minimal surface is of genus $2$ and has the conformal type and the symmetries 
 $\varphi_2,$ $\varphi_3$ and $\tau$ of the Lawson surface. The family of flat connections induces a family of 
 Lawson symmetric holomorphic structures $\dbar^\lambda=(\nabla^\lambda)''$ which extends to $\lambda=0.$
 As it is impossible to make a consistent choice of the eigenline bundles of symmetric Higgs fields
 with respect to $\dbar^\lambda$ for all $\lambda\in \C^*$ (see Proposition \ref{The definition of the spectral curve}) we need to 
 introduce a so-called spectral curve which double covers the
 spectral plane $\C^*$ and enables us to parametrize the eigenline bundles. Then, the family of flat connections 
 $\nabla^\lambda$ is determined (up to a $\lambda$-dependent gauge) by the corresponding family of flat line 
 bundles over the torus. The behavior of this family of flat line bundles is very similar (at least around $\lambda=0$)
 to the family of flat line bundles parametrized by the spectral curve of a minimal or CMC torus, compare with \cite{H}. 
 The main difference is that we have some kind of symmetry breaking between $\lambda=0$ and $\lambda=\infty:$
  We do not treat the holomorphic and 
 anti-holomorphic structures of a flat connection in the same way but consider the moduli space of flat connections as
 an affine bundle over the moduli space of holomorphic structures. As a consequence, we do not have an explicitly 
 known reality condition, which seems to be the missing ingredient to explicitly determine the Lawson surface. 
 
  By taking the gauge equivalence classes of the associated family of holomorphic structures $\dbar^\lambda$
   we obtain a holomorphic map
 \[\mathcal H\colon \C\to \mathcal S\cong\P^1\]
 to the moduli space of semi-stable Lawson symmetric holomorphic structures, see Proposition \ref{projective_line}.
 This map is given by $\mathcal H(\lambda)=[\dbar^\lambda]$ for those $\lambda$ where $\dbar^\lambda$ is 
 semi-stable. By remark \ref{two_points} it extends holomorphically to the points $\lambda$ where $\dbar^\lambda$ is not semi-stable.
 \begin{Pro}[The definition of the spectral curve]\label{The definition of the spectral curve}
 There exists a holomorphic double covering $p\colon\Sigma\to \C$ defined on a Riemann surface $\Sigma,$
 the spectral curve,
  together with a holomorphic map $\mathcal L\colon\Sigma\to Jac(\tilde M/\Z_3)$  such that 
 
 \begin{xy}
\hspace{5.5 cm}\xymatrix{
      \Sigma \ar[rr]^{\mathcal L} \ar[d]^{p}  &  &   Jac(\tilde M/\Z_3) \ar[d] ^{\Pi} \\
      \C \ar[rr]_{\mathcal H}    &       &   \mathcal S   
  }
\end{xy}
 
commutes, where $\Pi\colon Jac(\tilde M/\Z_3)\to\mathcal S$ is as in Proposition \ref{PI}. The map $p$
 branches over $0\in\C.$

 \end{Pro}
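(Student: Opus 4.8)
The plan is to realize $\Sigma$ as the fiber product of the two maps landing in $\mathcal S$. I would set
\[\Sigma_0:=\{(\lambda,L)\in\C\times Jac(\tilde M/\Z_3)\mid \mathcal H(\lambda)=\Pi(L)\},\]
and take $p$ and $\mathcal L$ to be the projections onto the first and second factor. Then $\Pi\circ\mathcal L=\mathcal H\circ p$ holds by construction, so the square commutes automatically. Since $\Pi$ has degree $2$ (Proposition \ref{PI}), the fiber $p^{-1}(\lambda)=\Pi^{-1}(\mathcal H(\lambda))$ has two points as soon as $\mathcal H(\lambda)$ avoids the branch locus of $\Pi$; over that open subset $\Sigma_0$ is a smooth Riemann surface and $p$ is an unbranched double cover. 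To define the spectral curve I would let $\Sigma$ be the normalization of the connected component of $\Sigma_0$ that meets the fiber over $0$; the projections extend holomorphically to $\Sigma$ and the diagram keeps commuting.

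The next step is the local analysis at the finitely many $\lambda_0$ with $s_0:=\mathcal H(\lambda_0)$ a branch value of $\Pi$. By Proposition \ref{PI} these branch values are $[\dbar^0]$ and the three S-equivalence classes of semi-stable non-stable bundles. Choosing local coordinates in which $\Pi$ reads $v=u^2$ and expanding $\mathcal H(\lambda)-s_0=a(\lambda-\lambda_0)^m+\dots$ with $a\neq0$, the fiber product is cut out locally by $u^2=a(\lambda-\lambda_0)^m+\dots$. When $m=1$ this is already smooth and $p$ is a simple branched double cover over $\lambda_0$; when $m\geq2$ the fiber product is singular, but the normalization resolves it. Either way $\Sigma$ is a Riemann surface and $p\colon\Sigma\to\C$ is a holomorphic double covering.

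Finally I would treat the point over $0$. Here I would invoke the asymptotic description of the associated family to identify the limit $(\nabla^0)''$ with the distinguished stable structure $\dbar^0$ of Lemma \ref{exceptional_structures}, so that $\mathcal H(0)=[\dbar^0]$ is exactly the branch value of $\Pi$ coming from the trivial spin bundle $\underline\C$ (Proposition \ref{PI}). Because $\lambda\mapsto[\dbar^\lambda]$ branches to order $1$ at $\lambda=0$, we fall into the case $m=1$ above, so $\Sigma_0$ is smooth over $0$ and $p$ acquires a genuine simple branch point there with $\mathcal L$ sending it to $\underline\C$; in particular this forces the chosen component to be a true $2:1$ cover. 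I expect the main obstacle to be exactly this last step, namely pinning down the $\lambda\to0$ limit of the family and its order-$1$ contact with the branch locus of $\Pi$, which is the substance of the asymptotic analysis underlying Theorem \ref{extension_dbar0}; everything else is a formal consequence once the fiber product is in place.
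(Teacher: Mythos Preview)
Your construction is essentially the paper's: define the fiber product, normalize, and project. The commutativity and degree-$2$ statements follow exactly as you say, and the local model $u^2=a(\lambda-\lambda_0)^m+\dots$ is the right way to see branching.

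The one place you misjudge is the difficulty of the key step at $\lambda=0$. You correctly identify that the substance of the proof is showing $m=1$ there, i.e.\ that $\mathcal H$ is immersed at $0$, but you attribute this to the asymptotic analysis behind Theorem~\ref{extension_dbar0}. In fact the paper does it by a direct two-line computation: the tangent to $\mathcal S$ at the stable point $[\dbar^0]$ is $H^1(M,\End_0(V,\dbar^0))$, dual via trace-and-integrate to $H^0(M,K\End_0(V,\dbar^0))$; from the explicit associated family $\nabla^\lambda=\nabla+\lambda^{-1}\Phi-\lambda\Phi^*$ one reads off $\partial_\lambda\dbar^\lambda|_{\lambda=0}=-\Phi^*$, and pairing this against the Higgs field $\Phi$ gives $\int_M\Tr(\Phi\wedge\Phi^*)=\int_M\vol\neq0$. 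So the derivative of $\mathcal H$ at $0$ is nonzero, and you are in your case $m=1$ without invoking anything from Section~\ref{exceptional_connections}. Everything else in your proposal matches the paper's argument.
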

\begin{proof}
We first define 
\begin{equation*}
\begin{split}
\underline\Sigma
&=\{(\lambda, L)\in\C\times Jac(\tilde M/\Z_3)\mid \Pi(L)=\mathcal H(\lambda)\}
\end{split}
\end{equation*}
which is clearly a non-empty analytic subset of $\C\times Jac(\tilde M/\Z_3).$ 
Then, the spectral curve is given by the normalization
\[\Sigma\to \underline\Sigma,\]
and $\mathcal L$ is the composition of the normalization with the projection onto the second factor.
It remains to prove that $\Sigma$ branches over $0.$ Because $\Pi$ branches over $[\dbar^0]$
this follows if we can show that the map $\mathcal H$ is immersed at $\lambda=0.$
 As $\dbar^0$ is stable, the tangent space at $[\dbar^0]$ of the moduli space of (stable) holomorphic 
structures with trivial determinant is given by $H^1(M,K\End_0(V,\dbar^0)).$ The cotangent space is 
given via trace and integration by 
$H^0(M,KEnd_0(V,\dbar^0)).$ With
\[\frac{\partial }{\partial\lambda}\dbar^\lambda=\dvector{0 & 0\\ \vol &0}\]
and \[\Phi=\dvector{0 & 1\\ 0 &0}\in H^0(M, K\End_0(V,\dbar^0))\] we see that 
\[\Phi(\frac{\partial }{\partial\lambda}\dbar^\lambda_{\mid \lambda=0})=\int_M\vol\neq0\]
which implies that $\mathcal H$ is immersed at $\lambda=0.$
\end{proof}

In order to study the family of gauge equivalence classes $[\nabla^\lambda]$ we
 consider the moduli space of flat $\C^*$-connection on $\tilde M/\Z_3$ as an affine holomorphic bundle
\[\mathcal A^f\to Jac(\tilde M/\Z_3)\]
over the Jacobian by taking the complex anti-linear part of a connection.
Then, as a consequence of Theorem \ref{abelianization} together with our discussion in 
Section \ref{exceptional_connections},
we obtain a meromorphic lift

 \begin{xy}
\hspace{5.5 cm}  \xymatrix{
        &   &       \mathcal A^f \ar[d]^{''}  \\
                \Sigma \ar[rru]^{\mathcal D} \ar[rr]_{\mathcal L}       &   &  Jac(\tilde M/\Z_3)
  }
\end{xy}

of the map $\mathcal L$ which parametrizes the gauge equivalence classes  $[\nabla^\lambda].$
The map $\mathcal D$ has poles at those points $p\in\Sigma$ where
$\mathcal L(p)=\underline\C\in Jac(\tilde M/\Z_3)$ is the trivial holomorphic bundle. 
Note that the (unique) preimage $0\in\Sigma$ of $\lambda=0$ always satisfies $\mathcal L(p)=\underline\C.$
The poles at $p\neq 0$ are generically simple, and the exact asymptotic
behavior of $\mathcal D$ around $p$ is determined by the results of Section \ref{exceptional_connections}.
\begin{Def}
The triple $(\Sigma,\mathcal L,\mathcal D),$ which is determined by the associated family of flat connections of a 
compact minimal surface in $S^3$
with the symmetries $\varphi_2,$ $\varphi_3$ and $\tau$ of the Lawson surface of genus $2,$ is called spectral data
of the surface.
\end{Def}
\subsection{Asymptotic behavior of the family of flat connections}
We have already seen that the spectral curve $\Sigma$ of a compact minimal surface in $S^3$
with the symmetries of the Lawson surface branches over $\lambda=0$ and that the map $\mathcal L$ is holomorphic.
We claim that the asymptotic behavior of $\mathcal D$ around the preimage of $\lambda=0$
is analogous to the case of minimal tori in $S^3$ \cite{H}.

In order to show this we consider a holomorphic family of flat Lawson symmetric $\SL(2,\C)$-connections
\[\lambda\mapsto\hat\nabla^\lambda\] defined on an open neighborhood of $\lambda=0$ such that 
$(\hat\nabla^\lambda)''=\dbar^\lambda.$ This implies that for small $\lambda\neq0$ the difference 
\[\nabla^\lambda-\hat\nabla^\lambda\]
is a symmetric Higgs field $\Psi\in H^0(M,K\End_0(V,\dbar^\lambda))$ whose determinant is a non-zero 
multiple of $Q.$
An expansion of $\hat\nabla^\lambda$ around $\lambda=0$ is given by

\begin{equation}\label{nablahat}
\hat\nabla^\lambda=\dvector{&\nabla^{spin^*} +\omega_0& -\frac{i}{2} Q^*+\alpha \\ &-\frac{i}{2}q & \nabla^{spin}-\omega_0}
+\lambda\dvector{&\omega_1& \alpha_1 \\ & -\vol+\beta_1 & -\omega_1}+...,
\end{equation}
where $q$ is a constant multiple of the Hopf differential, $\alpha_i\in\Gamma(M,KK^{-1}),$ 
$\omega_i\in\Gamma(M,K)$ and $\beta_1\in\Gamma(M,K^2).$ We claim that 
\[Q-q\neq 0\]
is a non-zero constant multiple of the Hopf differential.
To see this note that 
\begin{equation*}
\begin{split}
F^{\nabla^{spin^*}}&=\frac{1}{4}Q^*\wedge Q+\Tr(\Phi\wedge\Phi^*)\\
&=\frac{1}{4}Q^*\wedge q-\dbar\omega_0
\end{split}
\end{equation*}
 as a consequence of the flatness of $\nabla^\lambda$ as well as of $\hat\nabla^0.$
 The claim then follows from $\int_M\dbar\omega_0=0$ and $\int_M\Tr(\Phi\wedge\Phi^*)\neq0.$
Comparing \eqref{nablahat} with Proposition \ref{connection_data} in appendix \ref{A1} we obtain
\begin{equation}\label{asymptotic_determinant}
\det(\nabla^\lambda-\hat\nabla^\lambda)=-\frac{i}{4}\lambda^{-1}(Q-q)+...\,.
\end{equation}
This leads to the following theorem. 
\begin{The}\label{asymptotic_at_0}
Let $(\Sigma,\mathcal L,\mathcal D)$ be the spectral data associated to a compact minimal surface in $S^3$
with the symmetries $\varphi_2,$ $\varphi_3$ and $\tau$ of the Lawson surface of genus $2.$
Let $t$ be a coordinate of $\Sigma$ around $p^{-1}(\{0\})$ such that locally
\[\mathcal L(t)=\dbar_0+td\bar z,\]
 where $z$ is the affine coordinate on $\tilde M/\Z_3\cong\C/(2\Z+2i\Z).$
The asymptotic of the map $\mathcal D$ at $0$ is given by
\[\mathcal D(t)=d+t \bar dz+(c_{-1}\frac{1}{t}+c_1 t+..) dz\]
for some $c_{-1}\neq\pm\frac{\pi}{12}$ and with respect to the natural local trivialization of the affine bundle 
$\mathcal A^f\to Jac(\tilde M/\Z_3).$

The covering $p\colon\Sigma\to\C$ branches at most over those points $\lambda\in\C$ where $\dbar^\lambda$
is one of the exceptional holomorphic structures, i.e., $\mathcal L(\mu)=\underline\C$ for $p(\mu)=\lambda.$
Moreover
$\mathcal D$ satisfies the reality condition 
\[\mathcal D(\mu)=\mathcal U(\mathcal L(\mu))\]
for all $\mu\in p^{-1}(S^1)\subset\Sigma$
where $\mathcal U$ is the section given by Theorem \ref{unitary_a}, and the closing condition
\[\mathcal D(\mu)=[d+\frac{-1+i}{4}\pi dz+\frac{1+i}{4}\pi d\bar z]\]
for all $\mu\in p^{-1}(\{\pm 1\})\subset\Sigma.$
\end{The}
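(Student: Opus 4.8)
The plan is to transport each assertion about the associated family $\lambda\mapsto\nabla^\lambda$ through the abelianization of Theorem \ref{abelianization} and its degenerate limits from Section \ref{exceptional_connections}, reading it off as a statement about the line-bundle family $\mathcal D$. \emph{Asymptotic at $t=0$:} since $p$ branches over $\lambda=0$ (Proposition \ref{The definition of the spectral curve}), a coordinate $t$ with $\mathcal L(t)=\dbar_0+t\,d\bar z$ has $p(t)\sim t^2$. The key input is the determinant expansion \eqref{asymptotic_determinant}, $\det(\nabla^\lambda-\hat\nabla^\lambda)=-\tfrac i4\lambda^{-1}(Q-q)+\ldots$ with $Q-q$ a nonzero multiple of the Hopf differential. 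Feeding this into the residue/Serre-duality computation already carried out for the model case in Section \ref{The case of the stable holomorphic structure} (see \eqref{24i}--\eqref{4pii}), now with $Q-q$ in place of the quadratic differential there, yields $\mathcal D(t)=d+t\,d\bar z+(c_{-1}t^{-1}+c_1t+\ldots)\,dz$ of exactly the asserted shape; the vanishing of the $t^0$-term is forced, as in Theorem \ref{extension_dbar0}, by the oddness of the function $a^u$ from Theorem \ref{unitary_a}.

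\emph{The inequality $c_{-1}\neq\pm\tfrac\pi{12}$:} I argue by contradiction using the converse directions of Theorems \ref{extension_dbar0} and \ref{extension_non_stable}. If $c_{-1}=-\tfrac\pi{12}$ (resp.\ $+\tfrac\pi{12}$) then $\mathcal D$ has exactly the normal form \eqref{explicit_expansion_dbar0} (resp.\ \eqref{explicit_expansion_dbar0_uniform}), so the induced family $\nabla^\lambda$ on $M$ would extend holomorphically across $\lambda=0$ to a flat connection after a $\lambda$-dependent gauge. For such an extending family the Higgs field $\Psi_\lambda=\nabla^\lambda-\hat\nabla^\lambda$, being the difference of two families that are regular at $\lambda=0$, stays bounded, so $\det\Psi_\lambda$ is holomorphic at $\lambda=0$; this contradicts \eqref{asymptotic_determinant}, where $\det\Psi_\lambda$ has a genuine simple pole precisely because the nonzero Hopf differential makes $Q-q\neq0$. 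Hence $c_{-1}\neq\pm\tfrac\pi{12}$.

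\emph{Branching, reality and closing:} $\Sigma$ is the normalization of the fiber product of $\mathcal H$ and $\Pi$, and $\Pi$ is a $2{:}1$ cover branched exactly over the four spin bundles of $\tilde M/\Z_3$ (Proposition \ref{PI}); hence $p$ can branch only where $\mathcal H$ meets one of these branch values. To sharpen this to $\mathcal L(\mu)=\underline\C$ I would show that the three \emph{non-trivial} spin-bundle values---among them $[\underline\C^2]$, attained at $\lambda=\pm1$---are met by $\mathcal H$ only to even order, using the reality structure of the associated family; then $p$ does not branch there and the only transverse crossing is over $[\dbar^0]$, where $\mathcal L=\underline\C$. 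The reality condition is immediate: for $\lambda\in S^1$ the connection $\nabla^\lambda$ is special unitary, so by the uniqueness in Narasimhan--Seshadri it is the unique unitary flat connection with underlying structure $\dbar^\lambda$, and under the abelianization this is by definition the section $\mathcal U$ of Theorem \ref{unitary_a}; thus $\mathcal D(\mu)=\mathcal U(\mathcal L(\mu))$ on $p^{-1}(S^1)$. Finally at $\lambda=\pm1$ the connection $\nabla^{\pm1}$ is trivial, hence unitary, so $\dbar^{\pm1}\cong\underline\C^2$ and $\mathcal L(\mu)$ is the non-trivial spin bundle with $\Pi(\mathcal L(\mu))=[\underline\C^2]$ identified in the proof of Proposition \ref{PI}; evaluating the unitary value $a=\bar x$ of \eqref{connection1form} at the corresponding quarter-period $x=-\tfrac{1+i}{4}$ gives the stated closing condition.

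The routine parts are the asymptotic shape and the reality condition. The real work lies, first, in the inequality $c_{-1}\neq\pm\tfrac\pi{12}$, whose content is that $\det\Psi_\lambda$ has a genuine pole forced by the non-vanishing Hopf differential; and, more delicately, in the refinement of the branching statement to $\mathcal L(\mu)=\underline\C$. The latter is the main obstacle: it is not formal, since it requires controlling the order of contact of $\mathcal H$ with the strictly semi-stable branch values (e.g.\ at $\lambda=\pm1$) and therefore genuinely invokes the global reality structure of the family rather than the local fiber-product picture.
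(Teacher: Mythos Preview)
Your treatment of the asymptotic shape, the inequality $c_{-1}\neq\pm\tfrac{\pi}{12}$, and the reality and closing conditions is essentially correct and follows the paper's approach. (The paper justifies the vanishing of the $t^0$-coefficient slightly differently, via evenness of $\det(\nabla^\lambda-\hat\nabla^\lambda)$ in $t$, but this amounts to the same symmetry you invoke.)

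The genuine gap is exactly where you flag it: the branching statement. Your proposed route---controlling the order of contact of $\mathcal H$ with the strictly semi-stable branch values of $\Pi$ via ``the reality structure''---does not work. The reality relates $\lambda$ to $\bar\lambda^{-1}$ and yields no parity information about the vanishing order of $\mathcal H(\lambda)-[\underline\C^2]$ at $\lambda=\pm1$; more seriously, $\dbar^\lambda$ can be strictly semi-stable at points $\lambda_0\in\C^*$ lying off the unit circle, where no reality argument is available at all. So even granting your claim at $\lambda=\pm1$, the argument would be incomplete.

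The paper bypasses this entirely with a direct local argument you have missed. One considers the holomorphic line bundle over $\C$ whose fiber at generic $\lambda$ is the $1$-dimensional space of symmetric Higgs fields of $\dbar^\lambda$. The key point is that at the strictly semi-stable structures this space is \emph{still} $1$-dimensional, and a non-zero symmetric Higgs field there still has determinant a non-zero multiple of $Q$ (this is contained in Lemma \ref{exceptional_structures} and the explicit computation in the proof of Proposition \ref{PI}). Hence the two eigenlines remain distinct and vary holomorphically in $\lambda$ itself, producing local holomorphic sections of $p$ near such $\lambda_0$; therefore $p$ is unbranched there. Branching can occur only where the symmetric Higgs field degenerates to determinant zero, which is precisely at the exceptional structures, i.e., where $\mathcal L(\mu)=\underline\C$. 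This replaces your global contact-order analysis by a purely local statement about Higgs fields.
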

\begin{proof}
As in the proof of Theorem \ref{extension_dbar0} we see that the effect of adding a family of Higgs fields
$\nabla^\lambda-\hat\nabla^\lambda$ with asymptotic as in \eqref{asymptotic_determinant}
on the corresponding $\C^*$-connections over $\tilde M/\Z_3$ is given by adding 
\[(\frac{c_{-1}}{t}+c_0+c_1 t+...)dz \]
with $0\neq c_1,c_0,c_1\in\C.$
As $\det(\nabla^\lambda-\hat\nabla^\lambda)$ is even in $t$ by the definition of $t,$
the constant $c_0$ vanishes. Together with Theorem \ref{extension_dbar0} this implies the first statement.

The reality condition is a consequence of the fact that the connections $\nabla^\lambda$ are unitary for 
$\lambda\in S^1$ and of Theorem \ref{unitary_a}. The closing condition follows from the observation that the trivial 
connection of rank $2$ on $M$ corresponds to the connection \[d+\frac{-1+i}{4}\pi dz+\frac{1+i}{4}\pi d\bar z\] on 
$\tilde M/\Z_3.$

It remains to prove that the spectral curve cannot branch over the points $\lambda\in\C$ where $\dbar^\lambda$ is
semi-stable and not stable. For this we consider the holomorphic line bundle $L\to\C$ whose fiber is at a generic point
$\lambda$ 
spanned by the 1-dimensional space symmetric Higgs fields of $\dbar^\lambda.$ But the space of symmetric Higgs 
fields at the semi-stable points is also 1-dimensional, and the determinant of a non-zero symmetric Higgs field is a 
non-zero multiple of the Hopf differential $Q.$ Therefore, the eigenlines of the Higgs fields  can be parametrized
in $\lambda$ as long as $\dbar^\lambda$ is not an exceptional holomorphic structure.

\end{proof}

\subsection{Reconstruction}
Conversely, a hyper-elliptic Riemann surface $\Sigma\to \C$ 
together with a map $\mathcal L\colon \Sigma\to Jac(\tilde M/\Z_3)$ and a lift $\mathcal D$ into
the affine bundle of line bundle connections which
satisfy the asymptotic condition, the reality and closing conditions of Theorem \ref{asymptotic_at_0} give rise
to a compact minimal surface of genus $2$ in $S^3.$ To prove this we first need some preparation:
\begin{The}\label{general_reconstruction}
Let $\lambda\in\C^*\mapsto\tilde\nabla^\lambda$ be a holomorphic family of flat $\SL(2,\C)$-connections
on a rank $2$ bundle $V\to M$ over a compact Riemann surface $M$ of genus $g\geq2$ such that
\begin{itemize}
\item the asymptotic at $\lambda=0$ is given by
\[\tilde\nabla^\lambda\sim \lambda^{-1}\Psi+\tilde\nabla+...\]
where $\Psi\in\Gamma(M,K\End_0(V))$ is nowhere vanishing and nilpotent;
\item for all $\lambda\in S^1\subset\C$ there is a hermitian metric  on $V$ such that $\tilde\nabla^\lambda$ is unitary
with respect to this metric;
\item $\tilde\nabla^\lambda$ is trivial for $\lambda=\pm1.$
\end{itemize}
Then there exists a unique (up to spherical isometries) minimal surface $f\colon M\to S^3$ 
 such that its associated family of
flat connections $\nabla^\lambda$ and the family $\tilde\nabla^\lambda$ are gauge equivalent, i.e., there exists a $\lambda$-dependent holomorphic family of gauge 
transformations $g$ which extends through $\lambda=0$ such that
$\nabla^\lambda\cdot g=\tilde\nabla^\lambda.$
\end{The}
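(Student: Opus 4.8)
The plan is to recognise the three hypotheses as exactly the data needed to identify $\tilde\nabla^\lambda$ with (a $\lambda$-holomorphically gauge-equivalent copy of) the associated family of a conformal minimal immersion, and then to reconstruct the surface by the gauge-theoretic form of the DPW method, i.e.\ by loop group factorisation of an extended frame. The surface itself will be read off from the unitary extended frame by a Sym--Bobenko type formula evaluated at the two distinguished parameters $\lambda=\pm1$ at which the family is trivial. The whole content of the theorem is that the asymptotic, unitarity, and triviality conditions are precisely what force this construction to close up on $M$ and to produce an immersion rather than a branched or non-real map.

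First I would pass to the universal cover $\hat M\to M$ and, using flatness of $\tilde\nabla^\lambda$, solve $\tilde\nabla^\lambda\Phi^\lambda=0$ for a parallel (extended) frame $\Phi^\lambda\colon\hat M\to\SL(2,\C)$ depending holomorphically on $\lambda\in\C^*$; it is unique up to right multiplication by a $\lambda$-dependent constant. Regarding $\Phi^\lambda$ as a loop in $\lambda$, I would apply the loop group Iwasawa (r-Iwasawa) decomposition of $\SL(2,\C)$ as in \cite{DPW}, writing $\Phi^\lambda=F^\lambda B^\lambda$ with $F^\lambda$ unitary for $\lambda\in S^1$ and $B^\lambda$ the positive factor extending holomorphically and invertibly through $\lambda=0$. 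The decisive role of the hypothesis $\tilde\nabla^\lambda\sim\lambda^{-1}\Psi+\tilde\nabla+\dots$ with $\Psi$ nowhere vanishing and nilpotent is that it guarantees that the gauged family $\nabla^\lambda:=\tilde\nabla^\lambda\cdot B^\lambda$ has exactly the normal form of an associated family recorded in Proposition \ref{connection_data}: a first order pole at $\lambda=0$ whose nilpotent, nowhere-vanishing residue encodes the $(1,0)$-part of a branch-free conformal immersion into $S^3$. Thus $g:=B^\lambda$ is the gauge of the statement, holomorphic and invertible through $\lambda=0$ by construction.

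Next I would extract the surface. Triviality of $\tilde\nabla^{\pm1}$ means the monodromy of $\nabla^{\pm1}$ is trivial, so the unitary frame $F^\lambda$ evaluated at $\lambda=\pm1$ descends from $\hat M$ to $M$; hence
\[
f:=\bigl(F^\lambda|_{\lambda=1}\bigr)^{-1}\,F^\lambda|_{\lambda=-1}\colon M\to\SU(2)=S^3
\]
is a well-defined map on $M$. Flatness of the whole family, together with the $\lambda^{-1}$-pole normal form, is the gauge-theoretic reformulation of the harmonic map equation (cf.\ \cite{H}), so $f$ is a minimal immersion: nilpotency of $\Psi$ is conformality, nowhere-vanishing of $\Psi$ removes branch points, and unitarity on $S^1$ is the reality condition placing the image in the real sphere $\SU(2)$. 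By construction the associated family of $f$ is $\nabla^\lambda$, which is gauge equivalent to $\tilde\nabla^\lambda$ via $g$; this gives existence. For uniqueness, the only freedom above is the initial condition of $\Phi^\lambda$ and the residual constant factor in the Iwasawa splitting. A $\lambda$-independent constant right factor $C(\lambda)$ changes $f$ by $C(1)^{-1}fC(-1)$, i.e.\ by the action of $\SU(2)\times\SU(2)$ on $S^3$, which is a spherical isometry; any $\lambda$-dependence is absorbed into $g$ and leaves $[\nabla^\lambda]$ unchanged. Hence $f$ is unique up to isometries of $S^3$.

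The main obstacle I anticipate is the global loop group factorisation itself: one must perform the splitting $\Phi^\lambda=F^\lambda B^\lambda$ compatibly with the deck transformations of $\hat M\to M$, so that $F^\lambda$ carries the correct equivariance and the closing condition at $\lambda=\pm1$ can genuinely be invoked, while simultaneously controlling $B^\lambda$ as $\lambda\to0$. It is precisely here that the hypothesis on $\Psi$ is essential: being nowhere vanishing and nilpotent forces the pole at $\lambda=0$ to be of regular-singular DPW type, so that the positive factor extends holomorphically and invertibly through $\lambda=0$; over a zero of the leading term the splitting would degenerate and the reconstructed map would acquire a branch point. Verifying that the unitarisation on $S^1$ and the holomorphic extension through $\lambda=0$ are achieved by one and the same gauge $g$ — that the r-Iwasawa decomposition is simultaneously compatible with both ends of the spectral plane — is the technical heart of the argument.
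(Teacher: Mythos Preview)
Your overall strategy---loop group Iwasawa factorisation followed by Schwarz reflection---is the right one, but you apply the factorisation to the wrong object, and this is precisely the gap you yourself flag at the end.

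You split a parallel frame $\Phi^\lambda$ on the universal cover $\hat M$. As you note, the positive factor $B^\lambda$ then has no reason to be equivariant under deck transformations, so there is no gauge $g=B^\lambda$ defined on $M$; and without that, neither the gauged family $\nabla^\lambda$ nor the map $f$ descends. Your remark that the nilpotent, nowhere vanishing $\Psi$ controls the behaviour at $\lambda=0$ is correct, but that hypothesis says nothing about equivariance: the descent problem remains open in your argument.

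The paper sidesteps this completely by never passing to $\hat M$. The second hypothesis gives, for each $\lambda\in S^1$, a hermitian metric $(,)^\lambda$ on $V$; after trivialising $V=M\times\C^2$ with a fixed metric $(,)$, this is encoded by a global section $h\in\Gamma(S^1\times M,\SL(2,\C))$ with $h_\lambda^*(,)=(,)^\lambda$. The Iwasawa decomposition is applied to $g=h^{-1}$, producing $g=BF$ with $B$ holomorphic on the closed unit disc and $F$ unitary on $S^1$. Since $h$ is already defined on $M$, so are $B$ and $F$---no equivariance issue arises. Gauging by $B$ makes $\nabla^\lambda=\tilde\nabla^\lambda\cdot B$ unitary for the fixed metric along $S^1$, and then Schwarz reflection extends $\nabla^\lambda$ across $S^1$ to all of $\C^*$, yielding the standard form $\lambda^{-1}\Phi+\nabla-\lambda\Phi^*$. (A preliminary step, which you omit, is to argue that the generic $\tilde\nabla^\lambda$ is irreducible, so that $(,)^\lambda$ is essentially unique and varies real-analytically in $\lambda$.)

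Your uniqueness argument is also too weak: you only account for the freedom within your own construction, not for an arbitrary second minimal surface $f_2$ whose family is gauge equivalent to $\tilde\nabla^\lambda$. The paper argues directly: the gauge $g$ between the two associated families $\nabla_1^\lambda$ and $\nabla_2^\lambda$ extends holomorphically through $\lambda=0$ by hypothesis, is unitary on $S^1$ since both families are unitary for the same metric there, hence by Schwarz reflection extends through $\lambda=\infty$; being holomorphic on $\CP^1$ it is constant in $\lambda$, i.e.\ a single unitary gauge, which acts as a spherical isometry.
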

\begin{proof}
 It is a consequence of the asymptotic of $\tilde\nabla^\lambda$  that $(\tilde\nabla^\lambda)''$ is 
stable for generic $\lambda\in\C^*,$ for more details see \cite{He1}. This implies that the generic connection $\tilde\nabla^\lambda$ is irreducible.
Therefore the hermitian metric for which $\tilde\nabla^\lambda$ is 
unitary is unique up to constant multiples for generic $\lambda\in S^1\subset \C^*.$ For those $\lambda\in S^1$ the hermitian metric $(,)^\lambda$ is unique if we impose that it is 
compatible with the determinant on $V,$ i.e., the determinant of an orthonormal basis is unimodular.
The metric $(,)^\lambda$ depends real-analytically on $\lambda\in S^1\setminus S,$ where $S\subset S^1$ is the set of 
points where $\nabla^\lambda$ is not irreducible, and can be extended through the set $S.$

From now on we identify $V=M\times\C^2$ and fix a unitary metric $(,)$ on it. Therefore, $(,)^\lambda$ can be identified
with a section $[h]\in\Gamma(S^1\times M,\SL(2,C)/\SU(2))$ which itself can be lifted to a section
$h\in\Gamma(S^1\times M,\SL(2,C)).$ Clearly, $h$ is real analytic in $\lambda$ and satisfies
\[h_\lambda^*\, (,)=(,)^\lambda.\]
We now apply the loop group Iwasawa decomposition to $g=h^{-1},$ i.e.,
\[g=B F,\]
where $B\in\Gamma(D^1\times M,\SL(2,\C))$ is holomorphic in $\lambda$ on 
$D^1=\{\lambda\in\C\mid \bar\lambda\lambda\leq 1\} $ and $F\in\Gamma(S^1\times M,\SU(2))$ is unitary, see \cite{PS}
for details. Gauging
\[\nabla^\lambda=\tilde\nabla^\lambda\cdot B\]
we obtain a holomorphic family of flat connections $\nabla^\lambda$ on $D^1\setminus\{0\}$ which is unitary with 
respect to $(,)$ on $S^1$ by construction. Applying the Schwarz reflection principle yields a holomorphic family 
of flat connection $\lambda\in\C^*\mapsto\nabla^\lambda$ which is unitary on $S^1$ and trivial for $\lambda=\pm1.$
Moreover, as $B$ extends holomorphically to $\lambda=0,$ $\nabla^\lambda$ has the following asymptotic
\[\nabla^\lambda\sim \lambda^{-1}\Phi+\nabla+..\]
where $\Phi=B_0^{-1}\Psi B_0$ is complex linear, nowhere vanishing and nilpotent. Using the Schwarzian reflection
again, we obtain 
\[\nabla^\lambda=\lambda^{-1}\Phi+\nabla-\lambda\Phi^*\] for a unitary connection $\nabla.$ This proves the existence 
of an associated minimal surface $f\colon M\to S^3.$  

Let $f_1, f_2\colon M\to S^3$ be two minimal surface such that their associated families of
flat connections $\nabla_1^\lambda$ and $\nabla_2^\lambda$   are gauge equivalent to $\tilde\nabla^\lambda,$
where both families of gauge transformations extend holomorphically to $\lambda=0.$
Let $g\in\Gamma(\C\times M,\SL(2,\C))$ be the gauge between these two families which, by assumption, also extends 
to $\lambda=0.$
We may assume that for all $\lambda\in S^1$ the connections $\nabla_1^\lambda$ and $\nabla_2^\lambda$  are unitary 
with respect to 
the same hermitian metric. As the connections are generically irreducible the gauge $g$ is unitary along the unit circle.
  By the Schwarz
 reflection principle $g$ extends to $\lambda=\infty,$ and therefore $g$ is constant in $\lambda.$ Hence, the 
 corresponding minimal surfaces $f_1$ and $f_2$ are the same up to spherical isometries.
\end{proof}
\begin{Rem*}
There exists similar results as Theorem \ref{general_reconstruction} and Theorem 
\ref{reconstruction_spec} for the DPW approach to minimal surface, see 
\cite{SKKR} and \cite{DW}.
\end{Rem*}
\begin{Rem}\label{smoothness}
The above theorem is still true if the individual connections $\tilde\nabla^\lambda$ are only of class  $C^k$ for
$k\geq3$ and not necessarily
smooth.
\end{Rem}
Similar to the case of tori, the knowledge of the gauge equivalence class of the
associated family of flat connections $[\nabla^\lambda]$ for all $\lambda$ is
in general not enough to determine the minimal immersion uniquely. 
The freedom is given by $\lambda$-dependent meromorphic gauge transformations $g$
which is unitary along the unit circle. Applying such
a gauge transformation is known in the literature as dressing, see for example \cite{BDLQ} or \cite{TU}.
For tori, dressing is closely related to the isospectral deformations induced by changing the eigenline bundle of a
 minimal immersion. 
 In fact, simple factor dressing with respect to special eigenlines of the connections $\nabla^\lambda$ (those which 
 correspond to the eigenline bundle) generate the abelian group of isospectral deformations.
 The remaining eigenlines, which only occur at values of $\lambda$ where the monodromy takes values in
  $\{\pm \Id\}$, produce
 singularities in the spectral curve and therefore do not correspond to isospectral deformations in the sense of Hitchin.
 Due to the fact that for minimal surfaces of higher genus the generic connection $\nabla^\lambda$ is irreducible
 there are in general no continuous families of dressing deformations:
 \begin{The}\label{dressing}
 Let $f,\tilde f\colon M\to S^3$ be two conformal minimal immersions from a compact Riemann surface of genus
  $g\geq2$ together 
 with their associated families of flat connections $\nabla^\lambda$ and $\hat\nabla^\lambda.$ Assume that 
 $\nabla^\lambda$ is gauge equivalent to $\tilde\nabla^\lambda$ for generic $\lambda\in\C^*.$
 Then there exists a meromorphic map
 \[g\colon\CP^1\to \Gamma(M,\End(V))\] 
 such that $\nabla^\lambda\cdot g=\tilde\nabla^\lambda.$
 This map $g$ is holomorphic and takes values in the invertible endomorphisms
  away from those $\lambda_0\in\C^*$ where $\nabla^{\lambda_0}$ or equivalently
   $\tilde\nabla^{\lambda_0}$ is reducible. 
   
   The space of such dressing deformations of surfaces $f\mapsto\tilde f$ is generated by simple factor dressing, i.e., by
    maps $d\colon\CP^1\to \Gamma(M,\End(V))$ of the form
   \[ d(\lambda)=\pi^L+\frac{1-\bar\lambda_0^{-1}}{1-\lambda_0}\frac{\lambda-\lambda_0}{\lambda-\bar\lambda_0^{-1}}\pi^{L^\perp},\] 
   where $L$ is an eigenline bundle of the connection $\nabla^{\lambda_0}$ and $L^\perp$ is its orthogonal complement.
  \end{The}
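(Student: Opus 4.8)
The plan is to split the argument into two parts: first the existence of a meromorphic dressing gauge $g$ with controlled singularities, and then its factorization into simple factors. For the first part I would fix a hermitian metric for which both families $\nabla^\lambda$ and $\tilde\nabla^\lambda$ are unitary along $S^1$; as in the proof of Theorem \ref{general_reconstruction} this metric is unique up to scale since the generic connection is irreducible. For generic $\lambda\in\C^*$ the underlying holomorphic structures $(\nabla^\lambda)''$ are stable by \cite{He1}, so both connections are irreducible, and by Schur's lemma the gauge $g(\lambda)$ realizing $\nabla^\lambda\cdot g(\lambda)=\tilde\nabla^\lambda$ is unique up to sign; the sign can be fixed consistently because the generic locus is connected. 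To obtain holomorphic dependence I would view $g$ as a parallel section of $\End(V)$ with respect to $\nabla^\lambda$ on the target and $\tilde\nabla^\lambda$ on the source. Equivalently, fixing a base point, the intertwiner $C(\lambda)\in\SL(2,\C)$ conjugating the holomorphically varying monodromy representations solves a linear system with holomorphic coefficients, hence is meromorphic in $\lambda$, and parallel transport produces $g(\lambda)$. Near $\lambda=0$ both families share the nilpotent $\lambda^{-1}\Phi$ leading term, which forces $g$ to extend meromorphically across $0$; the circle symmetry below then extends it across $\infty$, giving a meromorphic $g\colon\CP^1\to\Gamma(M,\End(V))$.

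Next I would analyze the reality and the singularities of $g$. Along $S^1$ both connections are unitary with respect to the same metric, and by irreducibility the intertwining gauge is unitary after normalizing its determinant; this yields a Schwarz-type reflection relating $g(\lambda)$ and $g(\bar\lambda^{-1})$, so that poles of $g$ occur in reflected pairs $\{\lambda_0,\bar\lambda_0^{-1}\}$. Away from the reducible locus $g$ is holomorphic and takes values in the invertible endomorphisms. At a reducible value $\lambda_0\in\C^*$ the connection $\nabla^{\lambda_0}$ splits as a direct sum of flat line bundle connections, and I would show that the leading singular behavior of $g$ near $\lambda_0$ and its reflection is governed by one of the $\nabla^{\lambda_0}$-parallel eigenline bundles $L\subset V$. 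The key local claim is that the singular part of $g$ is proportional to the projection $\pi^{L^\perp}$, matching exactly the singular factor of the simple factor dressing $d$ from the statement.

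With this local description I would factor $g$ by induction on the total pole order. Given a pole at $\bar\lambda_0^{-1}$, I form the simple factor dressing $d$ determined by the eigenline bundle $L$ of $\nabla^{\lambda_0}$. Since $d$ is unitary on $S^1$ and $d(1)=\Id$ (with $d(-1)$ a constant endomorphism, hence harmless for the triviality at $\lambda=\pm1$), the dressed connection $\nabla^\lambda\cdot d$ is again the associated family of a minimal surface, while $g\cdot d^{-1}$ has strictly smaller total singularity. Iterating removes the poles one pair at a time; the process terminates with a gauge that is holomorphic and invertible on all of $\CP^1$, hence constant, and unitary on $S^1$, hence a spherical isometry in the sense of Theorem \ref{general_reconstruction}. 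Therefore $g$ is a finite product of simple factor dressings up to a constant, which is the assertion.

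I expect the main obstacle to be the local analysis at the reducible values, namely establishing that the singular part of $g$ is exactly of simple-factor type $\pi^{L^\perp}$ with $L$ a $\nabla^{\lambda_0}$-parallel eigenline bundle. This is the higher genus analogue of the eigenline-bundle picture for tori recalled in the introduction, and it requires controlling how the gauge class of $\nabla^\lambda$ degenerates as $\lambda$ approaches the reducible (non-stable) locus. One must rule out higher order poles and show that only the orthogonal complement of a single parallel line subbundle enters, using the rank two reducibility of $\nabla^{\lambda_0}$ together with the holomorphic family structure and the reflection symmetry across $S^1$.
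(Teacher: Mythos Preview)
Your outline has the right architecture (build a holomorphic line bundle of intertwiners, extend across $0$ and $\infty$, use unitarity on $S^1$ for Schwarz reflection, then peel off simple factors by induction), and in this sense it parallels the paper's proof. But two steps are handled differently from the paper, and one of them contains a genuine error.

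\textbf{Extension to $\lambda=0$.} Your claim that ``both families share the nilpotent $\lambda^{-1}\Phi$ leading term'' is false: $f$ and $\tilde f$ are different minimal immersions, so their Higgs fields $\Phi$ and $\tilde\Phi$ (and their holomorphic structures $\dbar^0$, $\tilde\dbar^0$) are a priori unrelated. What one actually gets from the gauge relation near $\lambda=0$ is that the limiting endomorphism $g^0$ is a holomorphic morphism of \emph{stable pairs} $(\tilde\dbar^0,\tilde\Phi)\to(\dbar^0,\Phi)$, i.e.\ it intertwines the holomorphic structures and satisfies $\Phi\circ g^0=g^0\circ\tilde\Phi$. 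The paper then invokes Hitchin's Proposition~(3.15) in \cite{H1} on stable pairs to conclude that $g^0$ is an isomorphism. Without this, you have no mechanism forcing $g$ to extend holomorphically and invertibly through $0$, and your Schwarz reflection argument for $\lambda=\infty$ would collapse with it.

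\textbf{The induction.} You organize the factorization around \emph{poles} of $g$ and claim the singular part is proportional to $\pi^{L^\perp}$. The paper instead normalizes $g$ (via an Iwasawa decomposition of the scalar function $\det\tilde g$) so that it is holomorphic on the closed unit disc and unitary on $S^1$, and then runs the induction on the degree of the rational function $\det g$. At a zero $\lambda_0$ of $\det g$ one has a nonzero, noninvertible parallel endomorphism $g^{\lambda_0}$; its kernel is automatically a $\tilde\nabla^{\lambda_0}$-parallel line subbundle $L$, and the simple factor built from $L$ cancels that zero. This sidesteps entirely the local pole analysis you flag as the main obstacle: one never needs to prove that the Laurent expansion of $g$ has a rank-one principal part. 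Your pole-based scheme may be salvageable, but as stated it leaves exactly the hard step unproved, whereas the determinant-degree induction makes it automatic.

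A smaller point: the gauge between two irreducible connections is unique up to a \emph{scalar} in $\C^*$, not just a sign, so ``fixing the sign on a connected locus'' is not enough; the paper packages this ambiguity as a holomorphic line bundle $\mathcal G\to\C^*$ of parallel endomorphisms and then chooses a global meromorphic section.
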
 
 \begin{proof}
 We first show that $\nabla^\lambda$ and $\tilde\nabla^\lambda$ are gauge equivalent away from those 
 $\lambda_0\in\C^*$ where $\nabla^{\lambda_0}$ or $\tilde\nabla^{\lambda_0}$ is reducible.
 The gauge between two irreducible gauge equivalent connections $\nabla^\lambda$ and $\tilde\nabla^\lambda$ is  
 unique up to a constant multiple of the identity. Moreover, multiples of this gauge are the only parallel endomorphisms
 with respect to the connection $\nabla^\lambda\otimes(\tilde\nabla^\lambda)^*.$ As the connections depend holomorphic on
 $\lambda$ there exists a holomorphic line bundle $\mathcal G\to \C^*$ whose line at $\lambda\in\C^*$ is a subset
of the parallel endomorphisms (and coincides with it whenever $\nabla^\lambda$ or equivalently $\tilde\nabla^\lambda$ 
is irreducible). A non-vanishing section $g\in\Gamma(U,\mathcal G)$ around $\lambda\in U\subset \C^*$ 
gives rise to the gauge between $\nabla^\lambda$ and $\tilde\nabla^\lambda$ as long as $g^\lambda$ is an
isomorphism. This can fail only in the case where $\nabla^\lambda$ or equivalently $\tilde\nabla^\lambda$ is reducible.
 We need to prove that $g$ extends holomorphically to an isomorphism at $\lambda=0.$ Then, as
 $g$ is unitary along the unit circle, $g$ also extends holomorphically to an isomorphism at $\lambda=\infty$ by the 
 Schwarz reflection principle. Note that locally around $\lambda=0$ all connections are irreducible and all
 holomorphic structures are stable away from $\lambda=0.$ Then, as above, there exists a family of gauge
 transformations
  $g^\lambda$ which extend to a holomorphic endomorphism $g^0$ with respect to 
  $\dbar^0\otimes(\tilde\dbar^0)^*.$ From the fact that the connections around $\lambda=0$ are gauge 
  equivalent and the expansions of the two families one deduces that $g^0$ is a holomorphic endomorphism
  between the stable pairs $(\tilde\dbar,\tilde\Phi)$ and $(\dbar,\Phi),$ i.e., $\Phi\circ g^0=g^0\circ\tilde\Phi.$
  Therefore Proposition (3.15) of \cite{H1} implies that $g^0$ is an isomorphism.
  
  In order to find the globally defined dressing $g\colon\CP^1\to \Gamma(M,\End(V))$ we first investigate
  the bundle $\mathcal G\to\C^*.$ We have seen that it extends to $\lambda=0$ holomorphically
  and by switching to anti-holomorphic structures, one can also show that it extends holomorphically to 
  $\lambda=\infty.$ Therefore there exists a meromorphic section $\tilde g\in \mathcal M(\CP^1,\mathcal G)$ whose
  only poles are at $\lambda=\infty.$ As $\mathcal G$ is a holomorphic subbundle of 
  $\C\times\Gamma(M;\End(V))$ the determinant is a holomorphic map
  \[\det\colon\mathcal G\to\C.\]
  Consider the holomorphic function
  \[h\colon\C\to \C,\, h(\lambda)=\det(\tilde g^\lambda).\]
  Note that we may assume that $h$ is non-vanishing along the unit circle as $\tilde g$ is a complex multiple
  of a unitary gauge there.
 The Iwasawa decomposition 
 $h=h_+ h_u$ 
determines functions 
$h_+\colon \{\lambda\mid \lambda\bar \lambda\leq 1\}\to\C^*$ and $h_u\colon \C^*\to \C$ which satisfies
 $\parallel h_u(\lambda)\parallel=1$ for $\lambda\in S^1.$ These are unique up to unimodular constants.
 The square root $\sqrt{h_+}$ is then well-defined on $\{\lambda\mid \lambda\bar \lambda\leq 1\}$
 and we define
 \[g=\frac{1}{\sqrt{h_+}}\tilde g\in H^0(\{\lambda\mid \lambda\bar \lambda\leq 1\},\mathcal G).\]
 The determinant $\det g$ is unimodular along the unit circle, and therefore, $g$ is unitary along the unit circle.
 By the Schwarz reflection principle, we obtain a meromorphic map
 $g\in\mathcal M(\CP^1,\mathcal G)$ which satisfies $\nabla^\lambda\cdot g=\tilde\nabla^\lambda$ by construction.
 
 It is shown in \cite{BDLQ} that  a simple factor dressing $\nabla^\lambda\mapsto\nabla^\lambda\cdot d$ makes
the associated family of a new minimal surface. We want to show by induction that any $g$ as above is the product
of simple factor dressings. Note that $\det g\colon\CP^1\to\CP^1$ is a rational function. If its degree is $0,$
then $\det g$ is a non-zero constant, and $g$ is constant in $\lambda.$ As it is unitary on the unit circle,
$g$ acts as a spherical isometry on the surface. 
Assume that $\det g$ has a zero at $\lambda_0.$ 
As we have seen
$\lambda_0\in\C^*\setminus S^1.$
By multiplying with (a power of) 
$\frac{1-\bar\lambda_0^{-1}}{1-\lambda_0}\frac{\lambda-\lambda_0}{\lambda-\bar\lambda_0^{-1}}\Id$
we can also assume that $g^{\lambda_0}\neq0.$
 As $g^{\lambda_0}$ is a non-zero parallel endomorphism with respect to
$\nabla^{\lambda_0}\otimes(\tilde\nabla^{\lambda_0})$ we see that the line bundle
$L\to M$, which is given by $L_p=\ker g^{\lambda_0}_p$  at generic points $p\in M$,  is an eigenline bundle of 
$\tilde\nabla^{\lambda_0}.$ As a consequence of the unitarity of $\tilde\nabla^\lambda$ along the unit circle, $L^\perp$
is an eigenline bundle of $\tilde\nabla^{\bar\lambda^{-1}}.$ We can apply the simple factor dressing
\[d(\lambda)=\pi^{L^\perp}+\frac{1-\lambda_0}{1-\bar\lambda^{-1}_0}\frac{\lambda-\bar\lambda_0^{-1}}{\lambda-\lambda_0}\pi^L\]
to $\tilde\nabla^\lambda.$ Then, the product $g d$ is again a meromorphic family of gauge
transformations which extends holomorphically through $\lambda_0,$ and the degree of the rational function $\det(gd)$ 
is the degree of the rational function $\det(g)$
minus $1.$
  \end{proof}
 \begin{Lem}\label{breaking_symmetries}
Let $f\colon M\to S^3$ be an immersed minimal surface of genus $2$ having  the symmetries 
 $\varphi_2,$ $\varphi_3$ and $\tau.$ 
 Then a (non-trivial) dressing transformation of such a minimal immersion
 does not admit all symmetries $\varphi_2,$ $\varphi_3$ and $\tau.$
 \end{Lem}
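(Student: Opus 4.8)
The plan is to argue by contradiction: assume $f$ is Lawson symmetric and that a \emph{nontrivial} dressing $\tilde f$ is Lawson symmetric as well, and then produce a forbidden pole in one of the symmetry gauges of $\tilde f$. First I would invoke Theorem \ref{dressing} to write the two associated families as $\nabla^\lambda\cdot g=\tilde\nabla^\lambda$ for a meromorphic $g\colon\CP^1\to\Gamma(M,\End(V))$ that is unitary along $S^1$ and invertible off the discrete set where $\nabla^\lambda$ is reducible. Since $g$ is nonconstant and $\det g$ is unimodular on $S^1$, the rational function $\det g$ must vanish at some $\lambda_0\in\C^*\setminus S^1$; there $\nabla^{\lambda_0}$ (equivalently $\tilde\nabla^{\lambda_0}$) is reducible, $g^{\lambda_0}$ has rank one, and $\tilde L:=\im g^{\lambda_0}$ is an eigenline bundle of $\nabla^{\lambda_0}$.

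Next I would record the symmetry gauges. For each $\varphi\in\{\varphi_2,\varphi_3,\tau\}$ the Lawson symmetry of $f$ gives $g_\varphi$ with $\varphi^*\nabla^\lambda=\nabla^\lambda\cdot g_\varphi$; because this gauge is induced by the corresponding isometry of $S^3$ (cf.\ the discussion preceding the definition of Lawson symmetry and \cite{He}), it is holomorphic and \emph{invertible for all} $\lambda\in\C^*$, including at the reducible values. If $\tilde f$ is Lawson symmetric the analogous $\tilde g_\varphi$ exist and are likewise pole-free on $\C^*$. Pulling back $\tilde\nabla^\lambda=\nabla^\lambda\cdot g$ by $\varphi$ in two ways and using that for generic (irreducible) $\lambda$ the only parallel endomorphisms of $\nabla^\lambda$ are scalar — forced to be $\pm\Id$ by the determinant normalization — I would obtain the identity of meromorphic gauges
\[\tilde g_\varphi=\pm\, g^{-1}\,g_\varphi\,\varphi^* g.\]

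Then I would run a residue computation at $\lambda_0$. As $\det g$ vanishes to first order there, $g^{-1}$ has a simple pole whose residue $A$ is of rank one with $\ker A=\im g^{\lambda_0}=\tilde L$, while $g_\varphi$ and $\varphi^* g$ are regular at $\lambda_0$ (the pole of $\varphi^* g$ sits at $\bar\lambda_0^{-1}\neq\lambda_0$). Hence
\[\res_{\lambda_0}\tilde g_\varphi=\pm\,A\,g_\varphi(\lambda_0)\,(\varphi^* g)(\lambda_0),\]
and since $\im(\varphi^* g)(\lambda_0)=\varphi^*\tilde L$ this residue vanishes if and only if $g_\varphi(\lambda_0)$ carries $\varphi^*\tilde L$ into $\ker A=\tilde L$. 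As $g_\varphi(\lambda_0)$ maps the two eigenlines of $\varphi^*\nabla^{\lambda_0}$ onto the two eigenlines $\tilde L,\tilde L'$ of $\nabla^{\lambda_0}$, this is exactly the condition that $\varphi$ does \emph{not} interchange $\tilde L$ and $\tilde L'$. Since $\tilde g_\varphi$ is pole-free, no symmetry may interchange the eigenlines; as $\varphi_3$ has order $3$ it fixes each eigenline automatically, so the real content is that $\varphi_2$ and $\tau$ must fix $\tilde L$, i.e.\ $\tilde L$ is invariant under all of $\varphi_2,\varphi_3,\tau$.

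Finally I would rule out such a $G$-invariant eigenline off $S^1$ via the classification of Section \ref{holomorphic_structure_space}. Reducibility forces the degree-$0$ subbundle $\tilde L$ to destabilise $\dbar^{\lambda_0}$, so $\dbar^{\lambda_0}$ is semi-stable but not stable; the unstable extension $0\to S\to V\to S^*\to0$ is excluded since it admits no degree-$0$ subline bundle (such a bundle would embed in $S^*$ or equal $S$, both impossible for degree reasons), hence carries only irreducible connections (Proposition \ref{non_semi_stable_connection}). By Propositions \ref{projective_line} and \ref{semi-stable} the remaining possibilities are $\underline\C^2$ and the split bundles $L(P_i-P_j)\oplus L(P_j-P_i)$. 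For the split bundles the eigenlines $L(P_i-P_j),L(P_j-P_i)$ are interchanged by $\varphi_2$, since $\varphi_2^*L(P_i-P_j)=L(P_j-P_i)\neq L(P_i-P_j)$, producing the forbidden pole; and for $\underline\C^2$ the eigenline is a constant line, which if $G$-invariant carries a $G$-invariant flat structure on a holomorphically trivial bundle, forcing a purely imaginary holomorphic $1$-form and hence trivial holonomy, so that $\nabla^{\lambda_0}$ would be trivial — occurring only at $\lambda_0=\pm1\in S^1$. Each case contradicts $\lambda_0\in\C^*\setminus S^1$, completing the argument. The hard part will be the input of the second paragraph, namely the invertibility of the symmetry gauges $g_\varphi,\tilde g_\varphi$ \emph{across the reducible values}; this is what turns the residue computation into a genuine obstruction, and I would justify it from the geometric (isometry-induced, $\lambda$-independent) origin of these gauges.
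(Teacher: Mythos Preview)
Your route is genuinely different from the paper's and is largely sound, but the $\underline\C^2$ case in your final step has a gap.

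The paper's proof is much shorter and bypasses your case analysis by transporting the \emph{symmetric Higgs field} rather than the symmetry gauges. Since $f$ is Lawson symmetric and reducibility of $\nabla^{\lambda_0}$ forces $\dbar^{\lambda_0}$ to be strictly semi-stable, Lemma~\ref{exceptional_structures} (via the proof of Theorem~\ref{asymptotic_at_0}) gives a local holomorphic family $\Psi^\lambda$ of symmetric Higgs fields with $\det\Psi^\lambda$ a nowhere-vanishing multiple of $Q$. Conjugating by the dressing gauge $g$ and clearing the pole yields a nonzero Higgs field $\tilde\Psi^{\lambda_0}$ for $\tilde\dbar^{\lambda_0}$ with $\det\tilde\Psi^{\lambda_0}=0$. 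If $\tilde f$ were Lawson symmetric, the same lemma applied to $\tilde f$ would force $\det\tilde\Psi^{\lambda_0}\neq 0$, a contradiction. The single structural fact that a Higgs field with $\det\Psi=Q$ (simple zeros) admits no invariant line subbundle on $M$ replaces your entire Step~4; what you are doing case by case is essentially reproving this.

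The gap: for $\underline\C^2$ your phrase ``forcing a purely imaginary holomorphic $1$-form and hence trivial holonomy'' is not justified, since at $\lambda_0\notin S^1$ the connection is not unitary. The correct reason the induced connection $d+\alpha$ on the constant line $\tilde L\cong\underline\C$ is trivial is that the hyperelliptic involution $\varphi_2$ acts as $-1$ on $H^0(M,K)$, so $G$-equivariance forces $\alpha=0$. But this only trivializes the diagonal of $\nabla^{\lambda_0}$; you have not excluded a nontrivial unipotent extension $d+\left(\begin{smallmatrix}0&\beta\\0&0\end{smallmatrix}\right)$ with $\beta\in H^0(M,K)$. To finish you must check that no line $[\beta]\in\P H^0(M,K)$ is simultaneously fixed by $\varphi_3^*$ and $\tau^*$: indeed $\tau\varphi_3\tau^{-1}=\varphi_3^{-1}$, so $\tau^*$ swaps the two $\varphi_3^*$-eigenlines in $H^0(M,K)$, and no such Lawson symmetric unipotent connection exists. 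With this addition your argument closes, but the Higgs-field approach reaches the contradiction in two lines.
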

 \begin{proof}
 We have seen in the proof of Theorem \ref{asymptotic_at_0} that there exists a local holomorphic family of Higgs fields
 $\Psi^\lambda\in H^0(M,K\End_0(V,\dbar^\lambda))$ around every point $\lambda_0$ where $\nabla^{\lambda_0}$
 is reducible such that $\det\Psi^\lambda$ is nowhere vanishing. Let $g\colon\CP^1\to \Gamma(M,\End(V))$
 be a meromorphic family of gauges as in Theorem \ref{dressing}. Assume that $g(\lambda_0)$ exists but is a non-zero
 endomorphism which is not invertible.
  It is easy to see that the family of Higgs fields \[\hat\Psi^\lambda=g(\lambda)^{-1}\Psi^\lambda g(\lambda)\]
 with respect to $\tilde\nabla^\lambda=\nabla^\lambda\cdot g$ has a pole at $\lambda_0.$ Then, by resolving the pole by 
 multiplying with an appropriate power of $(\lambda-\lambda_0),$ the (local) holomorphic nowhere vanishing family
 of Higgs fields
 \[\tilde\Psi^\lambda=(\lambda-\lambda_0)^k \hat\Psi^\lambda\]
 satisfies $\det(\tilde\Psi^{\lambda_0})=0.$ 
 This is not possible for a surface $\tilde f$ which has all three symmetries $\varphi_2,$ $\varphi_3$ and $\tau.$
 \end{proof}
\begin{The}\label{reconstruction_spec}
Let $\Sigma$ be a Riemann surface and $p\colon \Sigma\to\C$ be a double covering induced by
the involution $\sigma\colon\Sigma\to\Sigma$ such that $p$ branches over $0.$
 Let $\mathcal L\colon\Sigma\to Jac(\tilde M/\Z_3)$ be a non-constant 
holomorphic map which is odd with respect to $\sigma$
and satisfies $\mathcal L(0)=\underline \C\in Jac(\tilde M/\Z_3).$
Let $\mathcal D\colon\Sigma\setminus p^{-1}(0)\to\mathcal A^f$ be a meromorphic lift of $\mathcal L$ to the moduli space of flat $\C^*$-
connections on $\tilde M/\Z_3$ which is odd with respect to $\sigma$
and which satisfies the conditions of Theorem \ref{extension_dbar0} and Theorem \ref{extension_non_stable} at its 
poles, i.e., $\mathcal D$ defines a holomorphic map from $\C^*$ to the moduli space of flat $\SL(2,\C)$-connections on
$M.$  If $\mathcal D$ has a first order pole at $0$ and satisfies the reality condition 
\[\mathcal D(\mu)=\mathcal U(\mathcal L(\mu))\]
for all $\mu\in p^{-1}(S^1)\subset\Sigma,$
where $\mathcal U$ is the section given by Theorem \ref{unitary_a}, and the closing condition
\[\mathcal D(\mu)=[d+\frac{-1+i}{4}\pi dz+\frac{1+i}{4}\pi d\bar z]\]
for all $\mu\in p^{-1}(\{\pm 1\})\subset\Sigma$ then there exists an immersed minimal surface
$f\colon M\to S^3$ such that $(\Sigma,\mathcal L,\mathcal D)$ are the spectral data of $f.$
Let $t$ be a holomorphic coordinate of $\Sigma$ around $p^{-1}(0)$
such that $t^2=\lambda,$ and consider the expansion
\[\mathcal D\sim d- (x_1 t+..)\pi d\bar z+(a_{-1} \frac{1}{t}+....) \pi dz.\]
Then the area of $f$ is given by
\[\text{Area}(f)=-12 \pi(\frac{1}{6}-2 \pi x_1 a_{-1}).\]

If $p$ only branches at those $\mu\in\Sigma$ where $\mathcal L(\mu)=\underline\C\in Jac(\tilde M/ \Z_3)$
then there is a unique $f$ which has the symmetries $\varphi_2,$ $\varphi_3$ and $\tau.$
\end{The}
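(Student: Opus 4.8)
The plan is to separate the claim into the existence of a symmetric surface and the uniqueness among symmetric surfaces, handling existence with Theorem \ref{general_reconstruction} and uniqueness with the dressing results Theorem \ref{dressing} and Lemma \ref{breaking_symmetries}.

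For existence I would first feed the spectral data into the abelianization: by Theorem \ref{abelianization}, together with the extensions at the poles of $\mathcal D$ provided by Theorem \ref{extension_dbar0} and Theorem \ref{extension_non_stable}, the triple $(\Sigma,\mathcal L,\mathcal D)$ yields a holomorphic family $\lambda\mapsto\tilde\nabla^\lambda$ of flat $\SL(2,\C)$-connections on $V\to M$ with the nilpotent asymptotic $\lambda^{-1}\Psi+\dots$ at $\lambda=0$ coming from the first order pole of $\mathcal D$. Since $\mathcal L$ and $\mathcal D$ take values over the torus $\tilde M/\Z_3$ and are odd with respect to $\sigma$, each $\tilde\nabla^\lambda$ is Lawson symmetric by construction. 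The reality and closing conditions are precisely the hypotheses of Theorem \ref{general_reconstruction}, which then produces a minimal immersion $f\colon M\to S^3$ whose associated family $\nabla^\lambda$ is gauge equivalent to $\tilde\nabla^\lambda$ through a gauge $g$ extending across $\lambda=0$; by construction $(\Sigma,\mathcal L,\mathcal D)$ are its spectral data.

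Next I would show that this $f$ is Lawson symmetric. For $\varphi\in\{\varphi_2,\varphi_3,\tau\}$ the surface $f\circ\varphi$ is again minimal on the same Riemann surface, with associated family $\varphi^*\nabla^\lambda$. As $\tilde\nabla^\lambda$ is Lawson symmetric there is a family of gauges $h^\lambda$ with $\varphi^*\tilde\nabla^\lambda=\tilde\nabla^\lambda\cdot h^\lambda$, and combining this with the reconstruction gauge shows that $\varphi^*\nabla^\lambda$ is gauge equivalent to $\tilde\nabla^\lambda$ via $\varphi^*g\,(h^\lambda)^{-1}$. If this gauge extends holomorphically through $\lambda=0$ as an isomorphism, then the uniqueness clause of Theorem \ref{general_reconstruction} forces $f\circ\varphi=f$ up to a spherical isometry, so $f$ carries all three symmetries. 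For the uniqueness of the symmetric surface, I would take two Lawson symmetric minimal immersions $f_1,f_2$ with the same spectral data; they induce the same map $\lambda\mapsto[\nabla^\lambda]$ and share the Lawson conformal structure, so Theorem \ref{dressing} gives a dressing $g\colon\CP^1\to\Gamma(M,\End(V))$ relating them, and Lemma \ref{breaking_symmetries} forces $g$ to be trivial because a non-trivial dressing cannot preserve all of $\varphi_2,\varphi_3,\tau$; hence $f_1=f_2$ up to isometry.

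The hard part will be the extension of the symmetry gauges $h^\lambda$ through $\lambda=0$ (and through the other poles of $\mathcal D$), and this is exactly where the branch hypothesis is used. By the oddness of $\mathcal L$ every branch point of $p$ already forces $\mathcal L(\mu)$ to be a spin (two-torsion) bundle; the assumption that $p$ branches only where $\mathcal L=\underline\C$ rules out branching at the non-trivial spin bundles. As in the last paragraph of the proof of Theorem \ref{asymptotic_at_0}, this guarantees that the symmetric Higgs field $\Psi_\lambda$ keeps $\det\Psi_\lambda$ a non-vanishing multiple of $Q$ off the exceptional locus, so the two eigenline bundles stay distinct and are parametrized holomorphically across $\Sigma$, keeping $h^\lambda$ an isomorphism up to the branch point over $0$. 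Were $p$ to branch at a non-trivial spin bundle, the eigenlines would collide away from the branch points of $\pi$, the gauge $h^\lambda$ would degenerate, and the reconstructed surface would only be a non-symmetric dressing of a symmetric one; excluding this is the decisive use of the branch condition.
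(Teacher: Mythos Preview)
Your overall strategy matches the paper's, but you skip a genuine analytic step in the existence argument. Theorem \ref{abelianization} together with the extension theorems only tells you that the spectral data determine, for each $\lambda\in\C^*$, a \emph{gauge equivalence class} of flat connections on $M$, i.e., a holomorphic map from $\C^*$ into the moduli space. They do not hand you a global holomorphic family $\lambda\mapsto\tilde\nabla^\lambda$ of actual connections, which is what Theorem \ref{general_reconstruction} needs as input. One can lift locally: reversing the constructions of Sections \ref{sec:flat_connections} and \ref{exceptional_connections} on small open sets $U_i\subset\C$ produces honest families $\nabla_i^\lambda$; on an open set $U_0\ni 0$ one gets a lift with a first order nilpotent pole at $0$. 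On overlaps these differ by gauges $g_{i,j}\colon U_i\cap U_j\to\mathcal G$ (well-defined because generically the connections are irreducible), and one must show this \v{C}ech $1$-cocycle in the Banach Lie group of $C^k$ gauges is a coboundary. The paper does this by invoking a generalized Grauert theorem over a Stein base (Bungart \cite{Bu}) to write $g_{i,j}=f_i f_j^{-1}$ and obtain the global family $\tilde\nabla^\lambda:=\nabla_i^\lambda\cdot f_i$. Without this, your sentence ``the triple $(\Sigma,\mathcal L,\mathcal D)$ yields a holomorphic family $\lambda\mapsto\tilde\nabla^\lambda$'' is unjustified; the explicit connection $1$-form \eqref{connection1form} gives only a \emph{meromorphic} family in $\lambda$, with singularities at the exceptional points, and removing each singularity by a local gauge is exactly what forces the gluing problem.

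Your symmetry and uniqueness discussion is close to the paper's, but be careful: after Grauert gluing the global $\tilde\nabla^\lambda$ need not be Lawson symmetric on the nose, so there is no reason to expect a single global $h^\lambda$ as you write it. What survives, under the branch hypothesis, is that the \emph{local} lifts $\nabla_i^\lambda$ can be chosen so that a nowhere vanishing family of symmetric Higgs fields $\Psi^i_\lambda$ has $\det\Psi^i_\lambda\neq 0$ whenever $\mathcal L(\mu)\neq\underline\C$; then the argument of Lemma \ref{breaking_symmetries} shows each $\nabla_i^\lambda$ is gauge equivalent to $\varphi^*\nabla_i^\lambda$ by a \emph{holomorphic} family of gauges, and the uniqueness clause of Theorem \ref{general_reconstruction} transports this to the surface. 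Your appeal to Theorem \ref{dressing} and Lemma \ref{breaking_symmetries} for the final uniqueness is exactly right.
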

\begin{proof}
We first show that the spectral data give rise to a 
holomorphic $\C^*$-family of flat $\SL(2,\C)$-connections on $M$ satisfying the conditions of Theorem
\ref{general_reconstruction}. By assumption we obtain a holomorphic map into the moduli space of flat 
$\SL(2,\C)$-connections on $M.$ Reversing the arguments of Section \ref{sec:flat_connections}
 and \ref{exceptional_connections} we obtain locally on open subsets of $\C^*$ holomorphic families of flat 
 $\SL(2,\C)$-connections on 
 $M$ which are lifts of the map to the moduli space. We cover $\C^*$ by these open sets $U_i,\, i\in \N,$ 
 such that for every $U_i$ there 
 exists at most one point where the corresponding connection is reducible.
 There also exist an open set $U_0$ containing $0$ such that on $U_0\setminus\{0\}$ there exists a lift 
 $\nabla^\lambda_0$ of the map to the 
 moduli space of flat $\SL(2,\C)$-connections on $M$ which has at most a first order pole at $\lambda=0.$  Moreover,
 as $\mathcal L(0)=\underline\C,$ the residuum at $0$ must be a complex linear $1$-form 
 $\Psi\in\Gamma(M,K\End_0(V))$ which is nilpotent. 
We now fix such families  of flat $\SL(2,\C)$-connections $\nabla_i^\lambda$ on every set $U_i.$
Let $\mathcal G$ be the complex Banach Lie group of $C^k$ gauges 
\[\mathcal G=\{g\colon M\to \SL(2,\C)\mid g \text{ is of class } C^k\},\]
where we have fixed a trivialization of the rank $2$ bundle $V=M \times \C^2$ and $k\geq4.$
On $U_i\cap U_j$ we define a map $g_{i,j}\colon U_i\cap U_j\to \mathcal G$ by
\[\nabla^\lambda_j=\nabla^\lambda_i.g_{i,j}.\]
Clearly, the maps $g_{i,j}$ are well-defined, and give rise to a $1$-cocycle of $\C=\cup_{i\in\N_0} U_i$
with values in $\mathcal G.$
As $\C$ is a Stein space  the generalized Grauert theorem as proven in \cite{Bu} shows the existence of maps
$f_i\colon U_i\to\mathcal G$ satisfying $f_i f^{-1}_j=g_{i,j}.$
Then \[\nabla^\lambda_i.f_i=\nabla^\lambda_j.f_j\]
on $U_i\cap U_j$ and we obtain a well-defined $\C^*$-family of flat $\SL(2,\C)$-connections 
$\tilde \nabla^\lambda$
which satisfies the 
reality condition and the closing condition of Theorem \ref{general_reconstruction}. Applying the proof of Theorem
\ref{general_reconstruction} we see that the holomorphic structure $(\tilde \nabla^0)''$ is stable and therefore
the residuum of $\tilde \nabla^\lambda$ at $\lambda=0$ is a nowhere vanishing nilpotent complex linear 1-form.
By Theorem \ref{general_reconstruction} we obtain an immersed minimal surface $f\colon M\to S^3.$
The formula for the energy of $f$ can be computed by similar methods as used in Section \ref{exceptional_connections}.

Now assume that $p$ only branches at those $\mu\in\Sigma$ where 
$\mathcal L(\mu)=\underline\C\in Jac(\tilde M/ \Z_3).$ Then the map $\mathcal D$ into the moduli space
of flat $\SL(2,\C)$-connections can be locally lifted (denoted by $\nabla_i^\lambda$) to the space of flat connections in 
such a way that a corresponding nowhere vanishing family of Higgs fields $\Psi^i_\lambda$ has non-zero
determinant whenever $\mathcal L(\mu)\neq\underline\C\in Jac(\tilde M/\Z_3)$ for $p(\mu)=\lambda.$
Arguing in the same lines as in the proof of Lemma \ref{breaking_symmetries} one sees that all
families of connections $\nabla^\lambda_i$ are gauge equivalent to
$\varphi^*\nabla^\lambda_i$ by holomorphic families of gauges  for all symmetries $\varphi=\varphi_2,\,\varphi_3,\,\tau.$ Then the uniqueness part
of Theorem \ref{general_reconstruction} proves that the corresponding minimal surface has the symmetries
$\varphi_2,$ $\varphi_3$ and $\tau.$ Moreover, Theorem \ref{dressing} and Lemma \ref{breaking_symmetries}
show the uniqueness of this minimal surface.

\end{proof}
\begin{Rem}
Computer experiments in \cite{HS} suggest that the spectral curve of the Lawson surface of genus $2$ is
not branched over
the punctured unit disc $D=\{\lambda\in\C\mid 0<\parallel\lambda\parallel\leq1\}.$ With these numerical spectral 
data the Lawson surface of genus $2$  can be visualized as a conformal immersion from the Riemann surface $M$ into 
$S^3$ by an
implementation of Theorem \ref{reconstruction_spec}
 in the xlab software of Nicholas Schmitt (see Figure \ref{Lawson_figure}).
\end{Rem}

\begin{figure}[htbp]
  \centering
  \begin{minipage}[b]{5.5 cm}
    {\vbox{\hspace{-1.5cm}
\vbox{\vspace{0.5cm}
\includegraphics[scale=0.25]{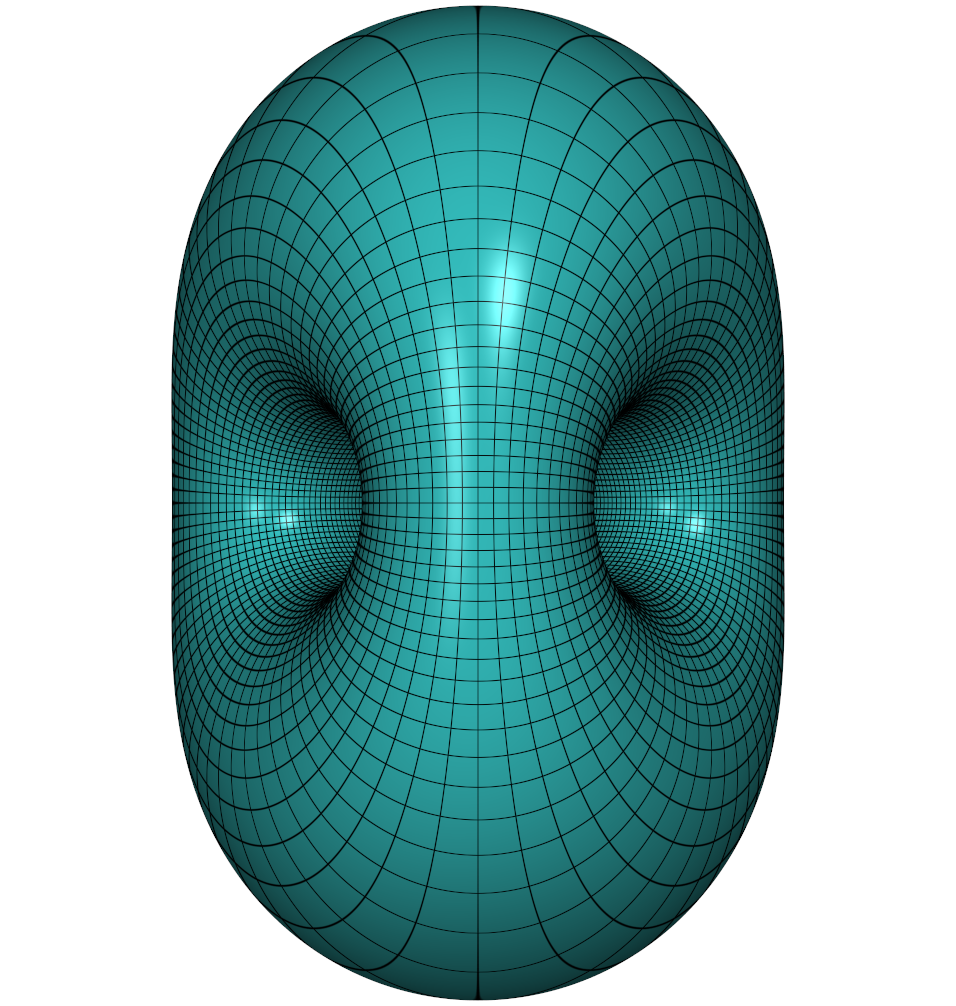}
}
}}  
  \end{minipage}
 \caption{Lawson genus 2 surface, picture by Nicholas Schmitt.}
  \label{Lawson_figure}
\end{figure}

\section{Lawson symmetric CMC surface of genus $2$}\label{A3}
In \cite{HS} we found numerical evidence that there exist a deformation of the Lawson surface of genus $2$ through
 compact CMC surface $f\colon M\to S^3$
 of genus $2$ which
preserves the extrinsic symmetries $\varphi_2,$ $\varphi_3$ and $\tau.$ 
We call these surfaces Lawson symmetric CMC
surfaces. We shortly explain how to generalize our theory to Lawson symmetric CMC surfaces.

Due to the Lawson correspondence, one can treat CMC surfaces in $S^3$ in the same way as minimal
surfaces, see for example \cite{B}. Consequently, there also exists an associated family of flat $\SL(2,\C)$-connections $\lambda\in\C^*\mapsto \nabla^\lambda$ 
which are unitary along the unit circle. 
In contrast to the minimal case
the Sym points $\lambda_1\neq\lambda_2\in S^1$,
at which the connections $\nabla^{\lambda_i}$ are trivial,
must not be the negative of each other.
 Then, the
CMC surface is obtained as the gauge between these two flat connections, but the mean curvature is now given by
$H=i\frac{\lambda_1+\lambda_2}{\lambda_1-\lambda_2}.$ For $\lambda_1=-\lambda_2$ we get a
 minimal surface. 
 
 As the extrinsic symmetries $\varphi_2,$ $\varphi_3$ and $\tau$ are (assumed to be) holomorphic on the surface,
the Riemann surface structure is almost fixed: It is given by the algebraic equation
\[y^3=\frac{z^2-a}{z^2+a}\]
for some $a\in \C^*.$ The Lawson Riemann surface structure is then given by $a=1.$
 Moreover, the every individual connection $\nabla^\lambda$ of the associated family is equivariant
with respect to the Lawson symmetries. All the theory developed for flat Lawson symmetric
$\SL(2,\C)$-connections on the Lawson surface carries over to flat Lawson symmetric $\SL(2,\C)$-connections
on $M:$
 The moduli space of Lawson symmetric holomorphic structures is double covered by the Jacobian of a complex 
$1$-dimensional torus. This torus itself is given by the equation $y^2=\frac{z^2-a}{z^2+a}.$
 There is a $2:1$ correspondence between gauge equivalence classes of flat line bundle connections on the above 
mentioned torus and gauge equivalence classes of flat 
Lawson symmetric $\SL(2,\C)$-connections on $M$ away from divisors in the corresponding moduli spaces.
 The correspondence extends to these divisors in the sense of Theorem \ref{extension_dbar0} and 
Theorem \ref{extension_non_stable}. The concrete formulas are analogous to the case of the Lawson surface.

From the observation that the moduli spaces of the flat Lawson symmetric $\SL(2,\C)$-connections can be described
analogously to the case of the Lawson surface itself, it is clear that the definition and the basic
properties of the spectral curve carries over to Lawson symmetric CMC surfaces of genus $2.$ Of course, the 
extrinsic closing condition changes, as well as the precise form of the energy formula.

\appendix

\section{The associated family of flat connections}\label{A1}
In this appendix we shortly recall the gauge theoretic description of minimal surfaces in $S^3$ which is
due to Hitchin \cite{H}. For more details, one can also consult \cite{He}. 

The Levi-Civita connection of the round $S^3$ is given with respect to the left trivialization $TS^3=S^3\times\im\H$ as
\[ \nabla=d+\frac{1}{2}\omega,\]
where $\omega$ is the Maurer-Cartan form of
$S^3$ which acts via adjoint representation.

The hermitian complex rank $2$ bundle $V=S^3\times\H$
with complex structure given by right multiplication with $i\in\H$ is a spin bundle for $S^3:$
The Clifford multiplication is given by
$TS^3\times V\to V; (\lambda, v)\mapsto \lambda v$
where $\lambda\in\Im \H$ and $v\in\H,$ 
and this identifies $TS^3$ as the skew symmetric 
trace-free complex linear endomorphisms of $V.$ 
There is an unique complex unitary connection on $V$ which induces on $TS^3\subset\End(V)$
 the Levi-Civita connection.
It is given by
\[
\nabla=\nabla^{spin}=d+\frac{1}{2}\omega,
\]
where the $\Im\H-$valued Maurer-Cartan form acts by left multiplication in the quaternions. 
 
Let $M$ be a Riemann surface and $f\colon M\to S^3$ be a conformal 
immersion. Then the pullback $\phi=f^*\omega$ of the Maurer-Cartan form satisfies the structural equations
\begin{equation}\label{MC}
d^\nabla\phi=0,
\end{equation}
where $\nabla=f^*\nabla=d+\frac{1}{2}\phi.$ 
The conformal map $f$ is minimal if and only if it is harmonic, i.e., if
\begin{equation}\label{harmonic}
d^\nabla*\phi=0.
\end{equation}
holds.
Let
\[\frac{1}{2}\phi=\Phi-\Phi^*
\]
be the decomposition of $\phi\in\Omega^1(M;f^*TS^3)\subset\Omega^1(M;\End_0(V))$ into the complex linear and 
complex anti-linear parts.
As $f$ is conformal 
\[\det\Phi=0.
\]
Note that $f$ is an immersion if and only if $\Phi$ is nowhere vanishing. 
In that case $\ker\Phi=S^*$ is the dual to the holomorphic spin bundle $S$ associated to the immersion.
The Equations
\ref{MC} and \ref{harmonic} are equivalent to
\begin{equation}\label{dbarPhi}
\nabla''\Phi=0,
\end{equation}
where $\nabla''=\frac{1}{2}(d^\nabla+i*d^\nabla)$ 
is the underlying holomorphic structure of the pull-back of the spin connection on $V.$
Of course \eqref{dbarPhi} does not contain
the property that $\nabla-\frac{1}{2}\phi=d$ is trivial. 
Locally this is equivalent to
\begin{equation}\label{curvature}
F^\nabla=[\Phi\wedge\Phi^*]
\end{equation}
as one easily computes.

From \eqref{dbarPhi} and \ref{curvature} one sees that
the associated family of connections
\begin{equation}\label{nablafamily}
\nabla^\lambda:=\nabla+\lambda^{-1}\Phi-\lambda\Phi^*
\end{equation}
is flat for all $\lambda\in\C^*,$ unitary along $S^1\subset\C^*$ and trivial for $\lambda=\pm1.$
 This family contains all the informations about the surface, i.e., given such a family of flat
connections one can reconstruct the surface as the gauge between $\nabla^1$ and $\nabla^{-1}.$  Using 
Sym-Bobenko formulas one can also make CMC surfaces in $S^3$ and $\R^3$ out of the family of flat connections. 
These CMC surfaces do not close in general. 

The family of flat connections can be written down in terms of the well-known geometric data associated to a minimal surface:
\begin{Pro}\label{connection_data}
Let $f\colon M\to S^3$ be a conformal minimal immersion with associated complex unitary rank $2$ bundle $(V,\nabla)$
and with induced spin bundle $S.$
 Let $V= S^{-1}\oplus S$ be the unitary decomposition, where $S^{-1}=\ker\Phi\subset V$ and $\Phi$ is the $K-$part of 
 the differential of $f.$ The Higgs field $\Phi\in H^0(M,K\End_0(V))$ can be identified with
 \[\Phi=\frac{1}{2}
 \in H^0(M;K\Hom(S,S^{-1})),\]
  and its adjoint $\Phi^*$ is given by $i\vol$ where $\vol$ is the volume form of the induced Riemannian metric.
The family of flat connections is given by
\[\nabla^\lambda=\dvector{&\nabla^{spin^*} & 
-\frac{i}{2} Q^*\\ & -\frac{i}{2} Q & \nabla^{spin}}+\lambda^{-1}\Phi-\lambda\Phi^*,\]
where $\nabla^{spin}$ is the spin connection corresponding to the Levi-Civita connection on $M$ and $Q$ is the Hopf 
differential of $f.$
\end{Pro}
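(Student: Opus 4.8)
The plan is to pass to the unitary splitting $V=S^{-1}\oplus S$ furnished by the immersion, to identify $\Phi$, $\Phi^*$ and the spin connection $\nabla$ in this splitting against the three asserted normal forms, and then to read off the formula for $\nabla^\lambda$ by substituting into \eqref{nablafamily}.

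First I would analyse the Higgs field $\Phi$, the complex linear part of $\tfrac12\phi$. Conformality gives $\det\Phi=0$ and immersedness gives $\Phi\neq0$, so $\Phi$ is a nowhere vanishing, nilpotent, endomorphism-valued $(1,0)$-form of rank one. Hence $\im\Phi\subseteq\ker\Phi$, and as both are line subbundles they coincide, $\im\Phi=\ker\Phi=:S^{-1}$. By \eqref{dbarPhi} the subbundle $S^{-1}$ is $\nabla''$-holomorphic and $\Phi$ induces a holomorphic bundle isomorphism $S\cong K\otimes S^{-1}$, where $S:=(S^{-1})^\perp$ for the unitary metric; in particular $S^2=K$, so that $K\Hom(S,S^{-1})=K\otimes S^{-2}\cong\mathcal O$ and $\Phi$ is a nonzero constant. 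That this constant is $\tfrac12$ I would check in the quaternionic model $V=M\times\H$: writing $\phi=f^*\omega$ and taking the $(1,0)$-part against the Clifford identification $TS^3\subset\End_0(V)$, the value $\tfrac12$ is exactly the coefficient appearing in $\tfrac12\phi=\Phi-\Phi^*$.

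Next I would compute the adjoint. By construction $\Phi^*$ is the metric adjoint of the $(1,0)$-form $\Phi$, hence a $(0,1)$-form mapping $S^{-1}$ to $S$, i.e. a section of $\bar K\,\Hom(S^{-1},S)=\bar K\otimes K$. This is precisely the bundle of $(1,1)$-forms in which the area form lives, matching the way $\vol$ enters \eqref{uniformization_connection}. To obtain $\Phi^*=i\,\vol$ I would work in a conformal coordinate $z$ with $f^*g=e^{2u}|dz|^2$: normalising $\Phi$ as above fixes its pointwise length through the induced metric, and forming the Hermitian adjoint reproduces the area form $\tfrac i2 e^{2u}\,dz\wedge d\bar z$, the factor $i$ being forced by the Hodge conventions $*dz=-i\,dz$, $*d\bar z=i\,d\bar z$.

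Finally I would decompose the pulled-back spin connection $\nabla=f^*\nabla^{spin}$ in the splitting $V=S^{-1}\oplus S$. Since $S^{-1}$ is a $\nabla''$-holomorphic subbundle of the Hermitian bundle $(V,\nabla)$, the standard second fundamental form computation gives the block form $\dvector{\nabla^{S^{-1}}&-A^*\\A&\nabla^{S}}$ with $A\in\Omega^{1,0}(\Hom(S^{-1},S))$ of pure type $(1,0)$; as $\Hom(S^{-1},S)=S^2=K$, the form $A$ is $K^2$-valued and therefore, by the definition of the Hopf differential as the $(2,0)$-part of the second fundamental form of $f$, a universal multiple of $Q$. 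A local computation fixes this multiple as $-\tfrac i2$, giving $A=-\tfrac i2 Q$ and $-A^*=-\tfrac i2 Q^*$, while the diagonal blocks are the connections induced on $S^{-1}=S^*$ and $S$, which are the spin connections $\nabla^{spin^*}$ and $\nabla^{spin}$ of the conformal metric. Substituting these three identifications into $\nabla^\lambda=\nabla+\lambda^{-1}\Phi-\lambda\Phi^*$ yields the asserted formula. I expect the main obstacle to be the consistent bookkeeping of the universal constants $\tfrac12$, $i$ and $-\tfrac i2$: each is governed by the interplay of the quaternionic complex structure (right multiplication by $i$), the Clifford identification $TS^3\subset\End_0(V)$, and the Hodge/adjoint conventions, so getting every sign simultaneously right is the delicate point; by contrast the structural assertions (rank-one nilpotency, holomorphicity of $S^{-1}$, and the purity of the second fundamental form) are routine.
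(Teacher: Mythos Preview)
The paper does not give a proof of this proposition; it is stated in the appendix as a recapitulation of the gauge-theoretic description of minimal surfaces due to Hitchin, with a pointer to \cite{H} and \cite{He} for details. So there is no paper proof to compare against, and your outline is in fact a reasonable reconstruction of how the verification goes.

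Your structural argument is sound: nilpotency of $\Phi$ gives $\im\Phi=\ker\Phi=S^{-1}$, holomorphicity of $S^{-1}$ follows from \eqref{dbarPhi}, and the isomorphism $S^2\cong K$ is exactly what identifies $S$ as a spin bundle. The block decomposition of the unitary connection $\nabla$ with respect to a holomorphic subbundle is standard, and the purity of the second fundamental form is, as you say, automatic from the holomorphicity of $S^{-1}$. Identifying the off-diagonal $(1,0)$-block as a constant multiple of the Hopf differential is the geometric heart of the computation, and this does require unwinding the Clifford identification $TS^3\subset\End_0(V)$ together with the definition of $Q$ as the $(2,0)$-part of the second fundamental form of $f$; you are right that this is where all the care goes.

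One small remark: you assert that $A$ is ``a universal multiple of $Q$'' and then that a local computation fixes the multiple as $-\tfrac{i}{2}$. It would strengthen the write-up to actually perform this local computation in a conformal coordinate, since the identification of the spinorial second fundamental form with the classical Hopf differential is not entirely tautological---it passes through the Clifford map and the specific realisation $V=f^*(S^3\times\H)$. The same applies to $\Phi^*=i\,\vol$: your argument via the conformal factor is correct in spirit, but the phase $i$ (rather than $-i$ or $1$) depends on the sign convention in $\tfrac12\phi=\Phi-\Phi^*$ and on whether $\vol$ denotes the Riemannian area form or its image under the isomorphism $\bar K K\cong\Lambda^{1,1}$. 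These are exactly the bookkeeping issues you flag, and they are the only places where a reader could reasonably ask for more.
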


\section{Lawson's genus $2$ surface}\label{A2}
We recall the construction of Lawson's minimal surfaces of genus $2$ in $S^3,$ see \cite{L}. 
Consider the round $3-$sphere
\[S^3=\{(z,w)\in\C^2\mid |z|^2+|w|^2=1\}\subset\C\oplus\C\]
and the geodesic circles
$C_1=S^3\cap ( \C\oplus\{0\})$ and $C_2=S^3\cap (\{0\}\oplus\C)$ on it.
Take the six points \[Q_k=(e^{i\frac{\pi}{3}(k-1)},0)\in C_1\] in equidistance on $C_1,$ and the four points 
\[P_k=(0,e^{i\frac{\pi}{2}(k-1)})\in C_2\] in equidistance on $C_2.$
A fundamental piece of the Lawson surface is the solution to the Plateau problem for the closed geodesic convex 
polygon $\Gamma=P_1Q_2P_2Q_1$ in $S^3.$  This means that it is the smooth minimal surface which is area 
minimizing under all surfaces with boundary $\Gamma.$
To obtain the Lawson surface one reflects the fundamental piece along the geodesic through $P_1$ and $Q_1,$ 
then one rotates everything around the geodesic $C_2$ by $\frac{2}{3}\pi$ two times, and in the end one reflects the 
resulting surface across the geodesic $C_1.$
Lawson has shown that the surface obtained in this way is smooth at all points. It is embedded, orientable and
has genus $2.$
The umbilics, i.e., the zeros
of the Hopf differential $Q$ are exactly at the points $P_1,..,P_4$ of order $1.$ 

 A generating system 
of the symmetry group of the Lawson surface is given by
\begin{itemize}
\item the $\Z^2-$action generated by $\varphi_2$ with $(a,b)\mapsto(a,-b);$ it is orientation preserving on the surface and its fix points are $Q_1,..Q_6;$
\item the $\Z_3-$action  generated by the rotation $\varphi_3$ around $P_1P_2$ by $\frac{2}{3}\pi,$ i.e.,
$(a,b)\mapsto(e^{i\frac{2}{3}\pi}a,b),$ 
which is holomorphic on $M$ with fix points $P_1,..,P_4;$
\item the reflection at $P_1Q_1,$ which is 
antiholomorphic; it is given by $\gamma_{P_1Q_1}(a,b)=(\bar a,\bar b);$
\item the reflection at the sphere $S_1$ corresponding to the 
real hyperplane spanned by $(0,1), (0,i),(e^{\frac{1}{6}\pi i},0),$ with $\gamma_{S_1}(a,b)=(e^{\frac{\pi}{3}i}\bar a, b);$
it is antiholomorphic on the surface,
\item the reflection at the sphere $S_2$ corresponding to the 
real hyperplane spanned by $(1,0), (i,0),(0,e^{\frac{1}{4}\pi i}),$ which is antiholomorphic on the surface and satisfies
$\gamma_{S_2}(a,b)=(a,i\bar b).$
\end{itemize}
Note that all these actions commute with the 
$\Z_2-$action. The last two fix the polygon 
$\Gamma.$ They and the first two map the oriented normal to itself. The third one maps the oriented normal to its negative.

Using the symmetries, one can determine the Riemann surface structure of 
the Lawson surface $f\colon M\to S^3$ as well as the other holomorphic data associated to it:
\begin{Pro}\label{LawsonRS}
The Riemann surface $M$ associated to the Lawson genus $2$ surface is the three-fold covering $\pi\colon M\to \CP^1$ 
of the Riemann sphere with branch points of order $2$ over $\pm1,\pm i\in\CP^1,$ i.e., the compactification of the
algebraic curve
\[y^3=\frac{z^2-1}{z^2+1}.\]
The hyper-elliptic involution is given by $(y,z)\mapsto(y,-z)$ and the Weierstrass points are $Q_1,..,Q_6.$
The Hopf differential of the Lawson genus $2$ surface is given by
\[Q=\pi^*\frac{ir}{z^4-1}(dz)^2\]
for a nonzero real constant $r\in\R$ and the spin bundle $S$ of the immersion is
\[S=L(Q_1+Q_3-Q_5).\]
\end{Pro}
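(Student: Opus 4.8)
The plan is to read off all of the holomorphic data from the symmetry group of Appendix~\ref{A2}, treating one structure at a time. First I would recover the curve equation from the holomorphic $\Z_3$-action $\varphi_3$. Since $\varphi_3$ is holomorphic with exactly the four fixed points $P_1,\dots,P_4$, the quotient map $M\to M/\Z_3$ is a three-fold covering totally ramified over the images of the $P_k$. Riemann--Hurwitz reads $2\cdot2-2=3(2g'-2)+\sum(\text{ram.})$; with ramification $4\cdot2=8$ contributed by the four fixed points, this forces $g'=0$, so $M/\Z_3\cong\CP^1$. Choosing the $\varphi_3$-invariant coordinate $z$ on $\CP^1$ together with a function $y$ on which $\varphi_3$ acts by $y\mapsto e^{2\pi i/3}y$, the branch locus is precisely $\{P_1,\dots,P_4\}$; normalizing their images to $\pm1,\pm i$ (using the equidistant placement of the $P_k$ on $C_2$ and the reflections permuting them) yields $y^3=\tfrac{z^2-1}{z^2+1}$, the branch points being exactly the zeros and poles of the right-hand side.

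Next I would identify the hyper-elliptic involution and the Hopf differential. The orientation-preserving holomorphic involution $\varphi_2$ has the six fixed points $Q_1,\dots,Q_6$, and a genus-$2$ surface admits a unique involution with six fixed points, namely the hyper-elliptic one; in the coordinates above it must be $(y,z)\mapsto(y,-z)$, whose fixed locus consists precisely of the six points over $z=0,\infty$, so these are the Weierstrass points and coincide with the $Q_k$. For the Hopf differential I would use that $Q$ is a \emph{holomorphic} quadratic differential (harmonicity of the minimal immersion), invariant under $\varphi_2,\varphi_3,\tau$, with simple zeros exactly at the umbilics $P_1,\dots,P_4$. The candidate $\pi^*\tfrac{(dz)^2}{z^4-1}$ has this property: near a branch point $z_0\in\{\pm1,\pm i\}$ a local uniformizer $w$ satisfies $z-z_0\sim w^3$, so $(dz)^2\sim w^4(dw)^2$ while $z^4-1\sim w^3$, leaving a simple zero, whereas over $z=\infty$ the expression is holomorphic and non-vanishing and there are no further zeros or poles. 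Hence $Q$ is a nonzero constant multiple of this differential, and the phase is forced to be purely imaginary by conjugating $Q$ under the anti-holomorphic reflections $\gamma_{S_1},\gamma_{S_2},\gamma_{P_1Q_1}$, giving $Q=\pi^*\tfrac{ir}{z^4-1}(dz)^2$ with $r\in\R$.

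For the spin bundle, $S^2=K$ forces $\deg S=1$, matching $\deg L(Q_1+Q_3-Q_5)=1$. On a genus-$2$ hyper-elliptic curve each Weierstrass point $W$ satisfies $2W\sim K$, since $2W$ is a fibre of the hyper-elliptic pencil $g^1_2=|K|$. Therefore $2(Q_1+Q_3-Q_5)\sim K+K-K=K$, so $L(Q_1+Q_3-Q_5)$ is a theta characteristic. Among the sixteen theta characteristics the defining condition $S^{-1}=\ker\Phi$ of Proposition~\ref{connection_data} singles out a unique one, and I would identify it by checking invariance under $\varphi_2,\varphi_3,\tau$ — using that these act on $\{Q_k\}$ via $z\mapsto\pm z$ and $y\mapsto e^{2\pi i/3}y$, so that $\varphi_3$ cyclically permutes the three Weierstrass points over $z=0$ and those over $z=\infty$ — and matching the outcome against $S=L(Q_1+Q_3-Q_5)$, as carried out in \cite{He}.

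The hard part will be this last step: translating the \emph{extrinsic} symmetries, which are prescribed in ambient $S^3$-coordinates $(a,b)$, into their precise action on the abstract Riemann surface and on the set of theta characteristics, so as to pin down exactly which spin structure is the one induced by the immersion rather than merely one of the sixteen square roots of $K$. Verifying that $L(Q_1+Q_3-Q_5)$ is the \emph{only} symmetric theta characteristic (and in particular distinguishing it from the other $\varphi_2,\varphi_3,\tau$-invariant candidates) is where the bulk of the bookkeeping lies; everything else reduces to the Riemann--Hurwitz count and the local computations indicated above.
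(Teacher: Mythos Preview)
The paper does not actually give a proof of this proposition: it is stated in Appendix~\ref{A2} immediately after the sentence ``Using the symmetries, one can determine the Riemann surface structure\ldots,'' with the verification deferred to the author's earlier paper \cite{He}. Your sketch carries out exactly the strategy that sentence suggests, and its ingredients are correct: the Riemann--Hurwitz count for $M\to M/\Z_3$, the identification of $\varphi_2$ with the hyper-elliptic involution via its six fixed points, the local order computation showing $\pi^*\frac{(dz)^2}{z^4-1}$ has simple zeros at the $P_k$ and no other zeros or poles, and the observation that $2(Q_1+Q_3-Q_5)\sim K$ because each $Q_k$ is a Weierstrass point.

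Two small points worth tightening. First, normalizing the four branch values to $\pm1,\pm i$ needs more than the equidistance of the $P_k$ on $C_2$: you must use that the additional symmetry $\tau$ (and the anti-holomorphic reflections) descend to M\"obius transformations of $M/\Z_3$ permuting the four branch values, which forces their cross-ratio to be that of the vertices of a square. Second, for the phase of $Q$ you should be explicit about which anti-holomorphic symmetry you use and how it acts on $z$: for instance $\gamma_{S_2}$ preserves the oriented normal, so $\gamma_{S_2}^*Q=\bar Q$, and working out $\gamma_{S_2}$ in the $z$-coordinate then pins down the factor $i$. Your candid acknowledgment that singling out the correct spin structure among the $\varphi_2,\varphi_3,\tau$-invariant theta characteristics is the genuine work, and that this is where one falls back on \cite{He}, matches the paper's own treatment.
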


\end{document}